\documentclass[reqno,12pt,letterpaper]{amsart}
\usepackage{amsmath,amssymb,amsthm,graphicx,mathrsfs,url}
\usepackage[usenames,dvipsnames]{color}
\usepackage[colorlinks=true,linkcolor=Red,citecolor=Green]{hyperref}
\usepackage{amsxtra}
\usepackage{enumitem}
\usepackage{epstopdf}

\usepackage{stix}
\usepackage[dvipsnames]{xcolor}

\makeatletter
\def\Ddots{\mathinner{\mkern1mu\raise\p@
\vbox{\kern7\p@\hbox{.}}\mkern2mu
\raise4\p@\hbox{.}\mkern2mu\raise7\p@\hbox{.}\mkern1mu}}
\makeatother
\usepackage{graphicx}

\setlength{\marginparwidth}{0.6in}

\def\?[#1]{\textbf{[#1]}\marginpar{\Large{\textbf{??}}}}

\let\epsilon=\varepsilon 
\newcommand{\dd}{\, {\rm d}}

\setlength{\textheight}{8.50in} \setlength{\oddsidemargin}{0.00in}
\setlength{\evensidemargin}{0.00in} \setlength{\textwidth}{6.08in}
\setlength{\topmargin}{0.00in} \setlength{\headheight}{0.18in}
\setlength{\marginparwidth}{1.0in}
\setlength{\abovedisplayskip}{0.2in}
\setlength{\belowdisplayskip}{0.2in}
\setlength{\parskip}{0.05in}

\DeclareGraphicsRule{*}{mps}{*}{}

\newtheorem{theorem}{Theorem}
\newtheorem*{theorem*}{Theorem}
\newtheorem{proposition}{Proposition}[section]

\newtheorem{lemma}{Lemma}
\newtheorem{corr}{Corollary}
\newtheorem{remark}{Remark}

\numberwithin{equation}{section}

\setcounter{tocdepth}{1}

\DeclareMathOperator{\sgn}{sgn}



\title[Stability of Rayleigh-Jeans equilibria in the kinetic FPU equation]{Stability of Rayleigh-Jeans equilibria \\ in the kinetic FPU equation}

\author{Pierre Germain} 
\address{Department of Mathematics, Huxley building, South Kensington campus, Imperial College London, London SW7 2AZ, United Kingdom}
\email{pgermain@ic.ac.uk}

\author{Joonhyun La} 
\address{June E Huh Center for Mathematical Challenges, 85 Hoegi-ro, Dongdaemun-gu, Seoul 02455, Republic of Korea.}
\email{joonhyun@kias.re.kr}

\author{Angeliki Menegaki} 
\address{Department of Mathematics, Huxley building, South Kensington campus, Imperial College London, London SW7 2AZ, United Kingdom}
\email{a.menegaki@imperial.ac.uk}

\begin{document}
\maketitle

\begin{abstract} We study the nonlinear dynamics of the kinetic wave equation associated to the FPU problem and prove stability of the non-singular Rayleigh-Jeans equilibria. The lack of a spectral gap for the linearized problem leads to polynomial decay, which we are able to leverage to obtain nonlinear stability.
\end{abstract}

\tableofcontents

\section{Introduction}

\subsection{The microscopic model}

In their groundbreaking 1955 study, \cite{Fermi1955StudiesON}, Fermi, Pasta, Ulam and Tsingou utilized the early electronic computers to explore the relaxation dynamics (thermalisation) of a chain of coupled nonlinear oscillators. 
The Fermi-Pasta-Ulam-Tsingou system is an anharmonic chain of oscillators without pinning potential. The system is described by its momentum $(p_n)_{n \in \mathbb{Z}}$ and position $(q_n)_{n \in \mathbb{Z}}$ with the Hamiltonian energy given by
 $$H(p,q) = \sum_{n \in \mathbb{Z}} \frac{p_n^2}{2} + \sum_{n \in \mathbb{Z}} F(q_{n+1} - q_n)$$
 where $F$ is smooth and satisfies $F(0) = 0$.

The dynamics are governed by the system of ODEs

$$
-\ddot{q_n} = F''(q_{n+1}-q_n) - F''(q_n - q_{n-1})
$$
A particular example is the so-called FPUT-$\beta$ case
$$
F(x) = \frac{1}{2} x^2 + \frac{\beta}{4} x^4;
$$
it will be the focus of the present article.

Motivated by the original FPUT problem and its substantial impact \cite{fiftyFPU}, significant attention has been given to the energy transport within oscillator chains of length $N$ coupled to thermal reservoirs at different temperatures. Numerically, it is observed that in $\beta$-FPUT chains, the thermal conductivity $\kappa_N$ diverges as $N$ increases in an anomalous way, specifically as $\kappa_N \sim N^{2/5}$, \cite{AokiKuzn01, LLP05}.
An alternative approach is to consider an infinite chain and observe energy spread after injecting energy at the origin. This reformulates the problem to studying the time-decay of the energy current-current correlation function, which is related to the decay rate of the linearized semigroup of the wave turbulence kinetic equation arising from such microscopic models \cite{AokiLukkSpohn}. For the linearised problem of the $\beta$-FPUT chain, it was proven in \cite{LukkarinenSpohn2008} that the correlation decays as $\mathcal{O}(t^{-3/5})$, aligning with numerical findings. Following the study in \cite{LukkarinenSpohn2008}, in \cite{MelletMerino} it was rigorously derived a macroscopic fractional diffusion equation describing heat transport in $\beta$-FPUT chains, confirming the anomalous diffusion behavior from the linearized Boltzmann phonon equation. Our main inspiration and motivation here is also the findings in \cite{LukkarinenSpohn2008}.

We also mention other classical choices of nonlinear atom chains. Beside the $\beta$-FPU chain, one may consider the $\alpha$-FPU chain, where $F(x) = \frac{x^2}{2} + \alpha \frac{x^3}{3}$, or the $\alpha+\beta$-FPU chain, which combines the nonlinearities of both the $\alpha$ and $\beta$ chains. Moreover, adding an additional pinning potential to the dynamics results in the Discrete Nonlinear Klein-Gordon chain, where $F_{KG}(x) = \frac{\delta}{2}x^2$ and the energy includes $U_{pin}(q) = (\frac{1}{2}-\delta) q^2 + \frac{1}{4} q^4$. Finally, the Toda lattice is yet another model, where $F_T(x) = \frac{1}{4\alpha^2}(e^{2\alpha x}-1-2\alpha x)$. 
For a detailed account and discussion on these models, we refer to the reviews \cite{OLDC2023, Lukkarinen2016, Spohnreview06}.


\subsection{The homogeneous kinetic wave equation}
In this article, we focus on the kinetic wave equation, or phonon Boltzmann equation, arising from the $\beta$-FPUT system.
The kinetic wave equation we will consider can be written 
$$
\partial_t f(t,p) = \mathcal{C}[f](t,p)
$$
where the collision operator is given by
\begin{equation} 
\label{collisionop}
\mathcal{C}[f](t,p_0)=\int_{\mathbb{T}^{3}} \delta(\Sigma) \delta(\Omega) \prod_{\ell=0}^3 \omega_\ell \prod_{\ell=0}^3 f_\ell \left(\frac{1}{f}+\frac{1}{f_1}-\frac{1}{f_2}-\frac{1}{f_3}\right)\dd p_1 \dd p_2 \dd p_3
\end{equation} 
with the usual notations $p=p_0$, $f= f_0 = f(p_0)$, $f_i=f(p_i)$ if $i=1,2,3$, similarly for $\omega_i=\omega(p_i)$, and furthermore
\begin{align*}
& \omega(p) = \left| \sin \left(\frac{p}{2} \right) \right|\\
& \Sigma = \Sigma (p, p_1, p_2,p_3) = p_0+p_1-p_2-p_3 \\
& \Omega= \Omega (p, p_1, p_2,p_3) = \omega_0 + \omega_1 -\omega_2 - \omega_3
\end{align*}
and finally we denote $\mathbb{T}$ for the periodized torus
$$
\mathbb{T} = \mathbb{R} / 2\pi \mathbb{Z}.  
$$

The equation conserves two quantities which will play an important role in our analysis: the mass $\mathcal{M}$ and energy $\mathcal{E}$
\begin{align*}
& \mathcal{M}(f) = \int_{\mathbb{T}} f(p) \dd p \\
& \mathcal{E}(f) = \int_{\mathbb{T}} \omega(p) f(p) \dd p.
\end{align*}

The equation also satisfies an H-theorem
$$
\frac{d}{dt} \int \log f(p) \dd p \leq 0,
$$
even though we will not make use of this fact.

Finally, the Rayleigh-Jeans equilibria (RJ) 
$$
\mathfrak{f}_{\beta,\gamma}(p) = \frac{1}{\beta \omega(k) + \gamma}, \qquad \beta,\gamma \geq 0
$$
are the unique stationary solutions \cite[Section 5]{LukkarinenSpohn2008}. 

The case $\gamma=0$ corresponds to a singular RJ equilibrium, which has infinite mass. However, it is favored by some authors since it gives equipartition of energy, as was initially expected by E. Fermi et.al.  in their numerical experiment \cite{OLDC2023,Lukkarinen2016}.

Interestingly, not all couples $(\mathcal{M}_0,\mathcal{E}_0)$ can be associated to a RJ equilibrium. For initial data whose mass and energy cannot be matched to a RJ equilibrium, the asymptotic behavior of the solution is an intriguing question - see the discussion in Section \ref{MERJ}. 

\subsection{Making sense of the collision operator} The expression for the collision operator in \eqref{collisionop} is not obviously meaningful, even for smooth functions $f$. Indeed, it involves the product of two $\delta$ functions, which, as is well-known, can be ill-defined.

As was showed by in \cite{LukkarinenSpohn2008} (see also Lemma \ref{lemmaparameterization}), the common zeros of $\Omega$ and $\Sigma$ are of two kinds.

On the one hand, trivial zeros are such that $\{ p_0, p_1 \} = \{ p_2, p_3 \}$. The integrand of the collision operator vanishes on the set of trivial zeros, which provides some cancellation. However, from the viewpoint of distribution theory, it is not clear how to make sense of the product of two Dirac $\delta$, even if it is evaluated on a function vanishing on their singular set ("indefinite form"). Lukkarinen and Spohn \cite{LukkarinenSpohn2008}, in the linearized case, show that the contribution of trivial zeros vanishes if one resorts to a regularization procedure. This regularization is closely related to the microscopic derivation of the equation (which remains an outstanding open problem) and we shall take for granted that trivial zeros can be ignored in the collision operator.

On the other hand, non-trivial zeros can be parameterized by
$$
p_1 = h(p_0,p_2) \mod 2\pi, \quad \mbox{where} \quad  h(x,z) = \frac{z-x}{2} + 2 \arcsin \left( \tan \frac{|z-x|}{4} \cos \frac{z+x} 4 \right).
$$
Using this expression, a calculation (\cite{LukkarinenSpohn2008} or Lemma \ref{lemmaintegrationp2}) shows that the collision operator can be written 
\begin{equation}
\label{formulacollision}
\mathcal{C}[f](p_0)= \int_0^{2\pi}
\frac{ \omega_0 \omega_1 \omega_2 \omega_3
 }{\sqrt{F_+(p_0,p_2)}}  \prod_{\ell=0}^3 f_\ell \left(\frac{1}{f}+\frac{1}{f_1}-\frac{1}{f_2}-\frac{1}{f_3}\right)
 dp_2,
\end{equation}
where it is understood that
$$
\begin{cases}
& p_3 = p_0 + p_1 - p_2 \\
& p_1 = h(p_0,p_2).
\end{cases}
$$
and
$$
F_+(p_0,p_2) = \sqrt{\left[ \cos \left( \frac{p_0}{2}\right) +\cos \left( \frac{p_2}{2}\right) \right]^2 +4\sin\left( \frac{p_0}{2}\right)\sin\left( \frac{p_2}{2}\right)}.
$$

\subsection{Main results and organization of the paper}

\textit{Section \ref{sectionLWP}} establishes basic local well-posedness results in weighted $L^\infty$ spaces. It also shows that local well-posedness cannot be expected (or at least is very delicate) in $L^p$-based spaces for $p < 3$. The results of this section (theorems \ref{theo: Linfty lwp} and \ref{thmunbounded}) can be summarized as the following theorem.

\begin{theorem*}
The collision operator is bounded in $\omega^\alpha L^\infty$ for $\alpha \geq -1$. Thus, the equation is locally well-posed in $\mathcal{C} (\omega^\alpha L^\infty)$ (the space of continuous functions with values in $\omega^\alpha L^\infty$ for $\alpha \geq -1$).

The collision operator is unbounded on $L^p$ for any $p < 3$.
\end{theorem*}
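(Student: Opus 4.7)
My first step is to expand
\begin{equation*}
\prod_{\ell=0}^{3}f_\ell\Big(\frac{1}{f_0}+\frac{1}{f_1}-\frac{1}{f_2}-\frac{1}{f_3}\Big) = f_1 f_2 f_3 + f_0 f_2 f_3 - f_0 f_1 f_3 - f_0 f_1 f_2,
\end{equation*}
writing $\mathcal{C}[f] = \sum_{i=0}^{3}\pm\mathcal{C}_i[f]$, with $\mathcal{C}_i[f]$ the trilinear integral in which the factor $f_i$ is omitted. Substituting the pointwise bound $|f_j|\le\|f\|_{\omega^\alpha L^\infty}\,\omega_j^\alpha$, together with the crucial fact that $\omega\le 1$ on $\mathbb{T}$ so that $\omega_j^{1+\alpha}\le 1$ whenever $\alpha\ge -1$, each $|\mathcal{C}_i[f](p_0)|$ is dominated by $\|f\|_{\omega^\alpha L^\infty}^3$ times a scalar integral
\begin{equation*}
\omega_0^{\sigma_i}\int_0^{2\pi}\frac{\omega_1^{a_1^i}\,\omega_2^{a_2^i}\,\omega_3^{a_3^i}}{\sqrt{F_+(p_0,p_2)}}\,dp_2,\qquad \sigma_i\in\{1,1+\alpha\},\quad a_j^i\ge 0,
\end{equation*}
and the whole problem reduces to bounding each such expression by $C\omega_0^\alpha$.

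\textbf{Kernel asymptotics.} The zero set of $F_+$ on $\mathbb{T}^2$ is the single point $(p_0,p_2)=(0,2\pi)\equiv(2\pi,0)$; away from it $F_+$ is uniformly bounded below and the integrals are harmless. Near it, writing $p_0=\epsilon_0$ and $p_2=2\pi-\epsilon_2$ and Taylor expanding produces three asymptotic regimes, $F_+\asymp\sqrt{\epsilon_0\epsilon_2}$ when $\epsilon_0\sim\epsilon_2$, $F_+\asymp\epsilon_2^2$ when $\epsilon_0\ll\epsilon_2^3$, and symmetrically $F_+\asymp\epsilon_0^2$ when $\epsilon_2\ll\epsilon_0^3$; the parameterization $p_1=h(p_0,p_2)$ then fixes the corresponding sizes of $\omega_1$ and $\omega_3$. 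I would verify regime by regime that the scalar integral above is $\lesssim\omega_0^\alpha$, exploiting the extra $\omega_j^\alpha$ decay from the $f_j$'s sitting at small $\omega_j$'s precisely where $1/\sqrt{F_+}$ becomes singular. Making this bound uniform over the whole range $\alpha\ge -1$ is the subtlest step and the main technical obstacle. Once the cubic estimate $\|\mathcal{C}[f]\|_{\omega^\alpha L^\infty}\lesssim \|f\|_{\omega^\alpha L^\infty}^3$ is in hand, trilinearity gives local Lipschitz continuity on balls of $\omega^\alpha L^\infty$ and a standard Picard iteration in $C([0,T];\omega^\alpha L^\infty)$ on $T\lesssim \|f_0\|_{\omega^\alpha L^\infty}^{-2}$ yields local well-posedness.

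\textbf{Part 2: unboundedness on $L^p$ for $p<3$.} To show no cubic $L^p$ bound can hold, I would exhibit $f\in L^p$ with $\mathcal{C}[f]\notin L^p$. A natural candidate is $f(p)=\omega(p)^{-s}$ with $1/3\le s<1/p$ --- possible precisely because $p<3$ --- which lies in $L^p$ since $\omega(p)\sim|p|$ near $0$ makes $\int\omega^{-sp}\,dp$ finite iff $sp<1$. Substituting into one term of $\mathcal{C}$, for instance
\begin{equation*}
\omega_0\int\frac{\omega_1^{1-s}\omega_2^{1-s}\omega_3^{1-s}}{\sqrt{F_+(p_0,p_2)}}\,dp_2,
\end{equation*}
and running the regime analysis of Part 1 in the opposite direction (to lower-bound rather than upper-bound the integral), I expect to extract a leading asymptotic $\mathcal{C}[f](p_0)\asymp \omega_0^{-\tau}$ near $p_0=0$ with $\tau\ge 1/p$, forcing $\mathcal{C}[f]\notin L^p$. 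A cleaner alternative is to test on a concentrating family $f_\lambda$ supported on $[-\lambda^{-1},\lambda^{-1}]$ and normalized in $L^p$, and extract the critical exponent $p=3$ by a pure scaling argument. In either approach the main obstacle is the same careful singular-kernel analysis as in Part 1, now deployed to secure a lower bound and pin down the exponent.
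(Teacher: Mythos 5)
On Part 1, your reduction to scalar integrals is reasonable, but you stop exactly where the proof has to happen: you yourself label the uniform bound of those integrals by $C\omega_0^\alpha$ over the whole range $\alpha\ge -1$ as ``the main technical obstacle'' and do not carry it out, so this half is a plan rather than a proof. No regime-by-regime analysis of the corner singularity of $F_+$ is needed: the paper's argument is two lines once one uses $\sqrt{F_+(p_0,p_2)}\ge 2\sqrt{\omega_0\omega_2}$ together with the resonant-manifold inequality $\omega_0\lesssim\omega_1\omega_2\omega_3$ (Lemma \ref{lemmaomega123}); the second inequality is precisely what manufactures the output weight $\omega_0^\alpha$ when $\alpha>0$, an ingredient your setup (which only records that the numerator exponents are nonnegative) has no substitute for, since the bare integral $\int_0^{2\pi}dp_2/\sqrt{F_+(p_0,p_2)}\sim\omega_0^{-1/3}$ actually diverges as $p_0\to0$. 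A small slip in your asymptotics: near the corner $F_+\asymp(\epsilon_0^2-\epsilon_2^2)^2+\epsilon_0\epsilon_2$, so the three regimes are $F_+\asymp\epsilon_0\epsilon_2,\ \epsilon_2^4,\ \epsilon_0^4$ (you wrote the values of $\sqrt{F_+}$).

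Part 2 contains a genuine error, not just a gap. Your main candidate $f=\omega^{-s}$ with $\frac13\le s<\frac1p\le 1$ provably cannot work: it has norm $1$ in $\omega^{-s}L^\infty$, and the boundedness statement of the very theorem you are proving (valid for $\alpha=-s\ge-1$) gives $|\mathcal{C}[\omega^{-s}](p_0)|\lesssim\omega_0^{-s}$, which is in $L^p$ exactly because $sp<1$; so no exponent $\tau\ge 1/p$ can emerge, and more generally power-law edge singularities are invisible to the $L^p$ unboundedness mechanism because the operator is bounded on the weighted $L^\infty$ scale. The ``pure scaling'' alternative with a single concentrating bump also fails: a bump near $p=0$ is annihilated by the vanishing factors $\omega_0\omega_1\omega_2\omega_3$ (and $F_+$ is not even small at $(0,0)$), while a single narrow bump at an interior point $p^*$ gives zero contribution since $h(p^*,p^*)=0$, so $p_1$ and $p_3$ leave the support and every term of the integrand contains one of these factors. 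The paper's construction supplies the missing idea: three bumps at distinct interior points of the resonant manifold $p_0^0$, $p_1^0=h(p_0^0,p_2^0)$, $p_2^0$, with anisotropic widths $\epsilon^2,\epsilon^2,\epsilon$ and $L^p$-normalized heights, where $p_2^0$ is chosen at the maximum of $z\mapsto h(p_0^0,z)$ so that $p_3^0=p_2^0$ and $p_3$ stays inside the wide bump as $p_2$ sweeps it, and with a check that no unintended resonances among the bump locations create cancellations. Then the loss term $f_0f_1f_2$ yields $|\mathcal{C}[f^\epsilon]|\gtrsim\epsilon^{1-5/p}$ on a set of measure $\epsilon^2$, the gain terms are only $O(\epsilon^{1-4/p})$, and $\|\mathcal{C}[f^\epsilon]\|_{L^p}\gtrsim\epsilon^{1-3/p}\to\infty$ for $p<3$. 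Without this multi-bump resonant concentration, neither of your proposed test families detects the failure of $L^p$ boundedness.
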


In \textit{Section \ref{sectionfirstproperties}}, we turn to the linearized problem around RJ equilibria, recapitulating many results obtained in \cite{LukkarinenSpohn2008}, and providing some extensions. The traditional tools of kinetic theory  which we apply only yield an estimate of type $L^2_{t,p}$ for the perturbation (Corollary \ref{corodissi}):

\begin{theorem*} Denoting $L$ for the linearized operator around a RJ solution, there exists a function $a(p)$ such that $|a(p)| \sim |\sin \left( \frac p 2 \right)|^{\frac 53}$ and
$$
\int_0^\infty \int_0^{2\pi} a(p) |e^{tL} g_0(p)|^2 \dd p \dd t \lesssim \| g_0 \|_{L^2}^2.
$$
\end{theorem*}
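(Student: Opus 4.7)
The plan is to prove the estimate via a standard energy argument for the linearized semigroup. First I would linearize $\mathcal{C}$ around $\mathfrak{f}$: writing $f = \mathfrak{f} + g$ and expanding to first order, and using the stationarity of $\mathfrak{f}$ (so that $\mathfrak{f}^{-1}+\mathfrak{f}_1^{-1}-\mathfrak{f}_2^{-1}-\mathfrak{f}_3^{-1} = \beta\Omega$ vanishes on $\{\delta(\Omega)\neq 0\}$), one obtains
\[
Lg(p_0) = -\int \delta(\Sigma)\delta(\Omega)\prod_{\ell=0}^{3}\omega_\ell\mathfrak{f}_\ell\,\bigl(\tilde g_0+\tilde g_1-\tilde g_2-\tilde g_3\bigr)\,\dd p_1\,\dd p_2\,\dd p_3,
\]
where $\tilde g_\ell := g_\ell/\mathfrak{f}_\ell^{\,2}$. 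The natural Hilbert space is $H := L^2(\mathfrak{f}^{-2}\dd p)$, equivalent to $L^2$ since $\mathfrak{f}$ is bounded above and below for a non-singular RJ. Symmetrizing in the four variables via $p_0\leftrightarrow p_1$, $p_2\leftrightarrow p_3$ and $\{p_0,p_1\}\leftrightarrow\{p_2,p_3\}$ — all invariances of the delta factors — makes $L$ self-adjoint and non-positive on $H$, with dissipation
\[
D(g) := -\langle Lg,g\rangle_H = \tfrac14 \int \delta(\Sigma)\delta(\Omega)\prod_{\ell=0}^{3}\omega_\ell\mathfrak{f}_\ell\,\bigl(\tilde g_0+\tilde g_1-\tilde g_2-\tilde g_3\bigr)^2\,\dd p_0\,\dd p_1\,\dd p_2\,\dd p_3.
\]

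Integrating the energy identity $\tfrac{d}{dt}\|e^{tL}g_0\|_H^2 = -2D(e^{tL}g_0)$ over $t\in(0,\infty)$ at once yields
\[
\int_0^\infty D(e^{tL}g_0)\,\dd t \leq \tfrac12\|g_0\|_H^2 \lesssim \|g_0\|_{L^2}^2,
\]
reducing the theorem to the pointwise coercivity $D(g) \gtrsim \int a(p)|g(p)|^2\,\dd p$. The candidate weight $a$ is identified by retaining the diagonal $\tilde g_0^2$ term in the expanded square and collapsing three of the four integrals via the parameterization preceding \eqref{formulacollision}; this produces the \emph{collision frequency}
\[
\nu(p_0) := \omega(p_0)\int_0^{2\pi}\frac{\omega_1\omega_2\omega_3}{\sqrt{F_+(p_0,p_2)}}\,\mathfrak{f}_1\mathfrak{f}_2\mathfrak{f}_3\,\dd p_2.
\]
A careful analysis of the vanishing of $F_+$ near $p_0=p_2=0$, carried out in \cite{LukkarinenSpohn2008}, shows $\nu(p_0)\sim|\sin(p_0/2)|^{5/3}$ at the origin and $\nu$ bounded above and below elsewhere, so $a=\nu$ has exactly the stated size.

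The hardest step will be to pass from the diagonal term to the full dissipation: the cross terms in the expanded square can \emph{a priori} cancel the diagonal, and in fact they do so precisely on $\ker L = \mathrm{span}(\mathfrak{f}^{\,2},\omega\mathfrak{f}^{\,2})$, the tangent space to the two-parameter RJ family (reflecting the conservation of mass and energy). My plan is to run a Grad--Boltzmann style compactness argument: decompose $L = -\nu\cdot\, +\, K$, show that $K$ is $\nu$-compact, and argue by contradiction using weak convergence to obtain the hypocoercive bound $D(g) \gtrsim \int \nu(p)|g-\Pi g|^2\,\dd p$, where $\Pi$ is the $H$-orthogonal projection onto $\ker L$. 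Since $\Pi e^{tL}g_0 = \Pi g_0$ is time-independent, the stated estimate then follows with $a=\nu$, understood to apply to $g_0\in(\ker L)^{\perp}$ (equivalently, $a$ is modified to vanish on the two-dimensional kernel, preserving the asymptotic $|a|\sim|\sin(p/2)|^{5/3}$).
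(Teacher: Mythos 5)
Your proposal is correct and follows essentially the same route as the paper: the energy identity for the symmetric Dirichlet form integrated in time, the splitting of $L$ into multiplication by the collision frequency $\sim|\sin(p/2)|^{5/3}$ minus an integral part, weighted compactness of that integral part, and coercivity of the Dirichlet form away from $\operatorname{Ker}L$ (which in your additive normalization is indeed $\operatorname{span}(\mathfrak{f}^2,\omega\mathfrak{f}^2)$), with the estimate holding for data orthogonal to the kernel. The only cosmetic differences are that the paper uses the relative perturbation $f=\mathfrak{f}(1+g)$ on plain $L^2$ rather than your $H=L^2(\mathfrak{f}^{-2}\dd p)$, and it obtains the coercivity from the spectrum of the compact self-adjoint operator $a^{-1/2}Ka^{-1/2}$ (eigenvalue $1$ eigenspace tied to $\operatorname{Ker}L$, spectral gap below $1$ on its complement) rather than your compactness-plus-contradiction argument, which uses the same input and yields the same conclusion.
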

This estimate is insufficient for nonlinear purposes: the decay in time is weak, and we cannot hope to close nonlinear estimates since the equation is ill-posed in $L^2$ type spaces. Furthermore, the lack of a spectral gap leads to the degenerate weight $a(p)$.

\textit{Section \ref{sectionpointwise}} addresses these shortcomings by proving pointwise decay at a polynomial rate (Theorem \ref{theoremdecay}):

\begin{theorem*}Assume that $g_0 \in L^\infty$, with a zero projection (in $L^2$) on $\operatorname{Ker} L$. Then for any $\mu ,\nu \in [\frac 16, \frac 12]$ and any $\delta>0$
$$
\| \omega^\mu e^{tL} g \|_{L^\infty} \lesssim_\delta \langle t \rangle^{- \frac{3}{5}(\mu + \nu)+ \delta} \| \omega^{-\nu} g_0 \|_{L^\infty}.
$$
\end{theorem*}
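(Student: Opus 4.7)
The plan is to decompose the linearized operator as $L = -\sigma + K$, where $\sigma(p)\asymp \omega(p)^{5/3}$ is the multiplier part (essentially the weight $a(p)$ appearing in Corollary \ref{corodissi}) and $K$ is the remaining integral operator. The target decay rate coincides exactly with the decay rate of the multiplier semigroup $e^{-t\sigma}$ in the chosen weighted norms: a direct optimization gives
$$
\sup_{p\in\mathbb{T}}\omega(p)^{\mu+\nu}e^{-ct\sigma(p)} \lesssim \langle t\rangle^{-3(\mu+\nu)/5},
$$
with the supremum attained at $\omega(p)\sim t^{-3/5}$. Hence $\|\omega^{\mu} e^{-t\sigma}g\|_{L^\infty} \lesssim \langle t\rangle^{-3(\mu+\nu)/5}\|\omega^{-\nu}g\|_{L^\infty}$ with no $\delta$-loss, and the strategy is to show that the full semigroup $e^{tL}$ inherits this rate perturbatively.

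Concretely, I would iterate the Duhamel formula
$$
e^{tL}g_0 = e^{-t\sigma}g_0 + \int_0^t e^{-(t-s)\sigma}K e^{sL}g_0\,ds,
$$
using the multiplier estimate at each step together with boundedness of $K$ on the scale of weighted spaces $\omega^{-\nu}L^\infty$ with $\nu\in[1/6,1/2]$; the expected form is $K:\omega^{-\nu}L^\infty\to\omega^{-\nu'}L^\infty$ for some $\nu'$ slightly smaller than $\nu$, coming from the integrability of $F_+^{-1/2}$ and the $\omega$-factors in \eqref{formulacollision}. The base case of the induction is a uniform $\omega^\mu L^\infty$-bound on $e^{tL}g_0$, which I would extract from the $L^2$ dissipation of Corollary \ref{corodissi} combined with orthogonality to $\ker L$. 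Plugging this into Duhamel produces a small positive decay rate $\alpha_1$; iterating the bootstrap $n\sim 1/\delta$ times improves the exponent up to $3(\mu+\nu)/5-\delta$, the $\delta$-loss coming from performing only finitely many iterations (or, equivalently, from an endpoint logarithmic divergence in the time integral that has to be absorbed by trading a small amount of decay).

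The hardest part will be establishing the correct mapping properties of $K$: one needs the smoothing effect of $K$ to be strong enough to dominate the degeneracy of $\sigma$ near $p=0,\pi$, so that the bootstrap genuinely improves the rate at each step rather than stalling. The restriction $\mu,\nu\in[1/6,1/2]$ should correspond precisely to the window in which these mapping properties and the multiplier estimate are simultaneously compatible. A secondary technical issue is transferring the $L^2$ dissipation into usable information in the weighted $L^\infty$ framework, since the equation is ill-posed in $L^2$-based spaces --- in particular, the projection onto $\ker L$ must be controlled at the $L^\infty$ level, which presumably requires a separate argument using the explicit structure of the kernel described in Section \ref{sectionfirstproperties}.
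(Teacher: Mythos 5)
Your guiding heuristic is correct --- the free multiplier semigroup $e^{-ta}$ with $a\sim\omega^{5/3}$ does give exactly the rate $\langle t\rangle^{-\frac 35(\mu+\nu)}$ on the weighted $L^\infty$ scale, and this is where the exponent in the theorem comes from --- but the mechanism you propose to transfer this to $e^{tL}$ has a genuine gap. The Duhamel iteration $e^{tL}g_0=e^{-ta}g_0+\int_0^t e^{-(t-s)a}Ke^{sL}g_0\,\dd s$, with $K$ controlled only through mapping properties between weighted $L^\infty$ spaces, treats $K$ perturbatively; but $K$ is not small relative to $a$ in any usable sense: the operator $a^{-\frac 12}Ka^{-\frac 12}$ is self-adjoint and compact with top eigenvalue exactly $1$, the eigenspace being $a^{\frac 12}\operatorname{Ker}L$ (this is the content of the proof of Proposition \ref{propositiondissipation}). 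So the per-step constant of your iteration cannot be pushed below $1$, and the bootstrap does not improve the rate --- this is precisely the ``stalling'' you worry about, and it is unavoidable in a purely $L^\infty$-perturbative framework. A clean way to see the obstruction: every ingredient you list except the base case applies verbatim to $g_0\in\operatorname{Ker}L$, for which $e^{tL}g_0=g_0$ does not decay, so no iteration whose only inputs are multiplier decay and weighted boundedness of $K$ can output decay. The orthogonality to $\operatorname{Ker}L$ has to act inside the iteration, not only in the base case, and in this problem it is only accessible through the coercivity $\langle -Lg,g\rangle\gtrsim\int a|g|^2$, i.e. in an $L^2$-type topology.

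The paper's proof is accordingly a hybrid of the two topologies, organized around a time-dependent frequency splitting at $\omega\sim\langle t\rangle^{-\alpha}$ with $\alpha<\frac 35$. The $L^2$ dissipation (Proposition \ref{propositiondissipation}, Corollary \ref{corodissi}) yields decay of the bulk $L^2$ mass, with the edge mass entering as a source term (Lemma \ref{lemm:ODEf for m}); pointwise ODE estimates of the form $|g(t,p)|\lesssim |g_0(p)|+\omega^{\frac 32}\langle t\rangle^{b+1+}$ control the edges (Lemma \ref{lem: Step 1 of the iteration}), and note that there the kernel bounds naturally produce $\|g\|_{L^2}$ on the right-hand side --- not a weighted $L^\infty$ norm of $g$ with a better weight, contrary to what your plan assumes for the mapping properties of $K$. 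These two pieces of information are then alternated, each improving the other, until the rates \eqref{boundq}--\eqref{boundm} are reached and combined into the theorem; the $\delta$-loss comes from the finitely many alternations, as you anticipated. Without this back-and-forth between the $L^\infty$ (edge) and coercivity-driven $L^2$ (bulk) estimates, the central step of gaining decay at each iteration fails.
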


This is achieved by understanding how the edges of the frequency domain, where dissipation degenerates, interact with the bulk of the domain. At a more technical level, we resort to an iterative scheme to gain decay increasingly.

Finally, \textit{Section \ref{sectionnonlinear}} deals with the fully nonlinear problem, showing global existence of solutions around non-singular RJ. This is accomplished through a careful definition of an appropriate norm that allows us to control the nonlinearity, relying crucially on the structure of the equation (Theorem \ref{thmnonlinstab}).

\begin{theorem*} There exists $\epsilon_0$ such that the following holds. If
$$
f(t=0) = \mathfrak{f}_{\beta,\gamma} [ 1 + g_0]
$$
where $\int \mathfrak{f}_{\beta,\gamma} g_0 \dd p = \int \omega  \mathfrak{f}_{\beta,\gamma} g_0 \dd p = 0$ and $\| \omega^{-\frac 12} g_0 \|_{L^\infty} = \epsilon < \epsilon_0$,
then there exists a global solution which can be written
$$
f(t) = \mathfrak{f}_{\beta,\gamma} [ 1 + g] \qquad \mbox{where} \quad \| \omega^{\frac{1}{2}} g(t,p) \|_{L^\infty_p} \lesssim \epsilon \langle t \rangle^{-\frac 35 + \frac{1}{1000}} \quad \mbox{for all $t \geq 0$}.
$$
\end{theorem*}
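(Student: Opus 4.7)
The plan is to write $f = \mathfrak{f}_{\beta,\gamma}(1+g)$ and convert the kinetic equation into perturbation form
$$
\partial_t g \;=\; Lg + \mathcal{Q}(g,g) + \mathcal{T}(g,g,g),
$$
where $L$ is the linearized operator of Sections \ref{sectionfirstproperties}--\ref{sectionpointwise}, $\mathcal{Q}$ is bilinear and $\mathcal{T}$ trilinear. This follows by substituting $f_\ell = \mathfrak{f}_\ell(1+g_\ell)$ into \eqref{formulacollision}, expanding $\sum_\ell \pm 1/[\mathfrak{f}_\ell(1+g_\ell)]$ as a power series, using the detailed-balance identity $\sum_\ell \pm(\beta\omega_\ell+\gamma)=\beta\Omega=0$ on the collision manifold to eliminate the zeroth-order term, and dividing by $\mathfrak{f}_0$. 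Conservation of mass and energy by $\mathcal{C}$ ensures that $\int \mathfrak{f}_{\beta,\gamma} g\,\dd p$ and $\int \omega\mathfrak{f}_{\beta,\gamma} g\,\dd p$ are preserved in time, so $g(t)$ remains orthogonal to $\operatorname{Ker} L$ and the pointwise decay theorem of Section \ref{sectionpointwise} is applicable at every time slice.

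The argument then proceeds via Duhamel and a continuity (bootstrap) argument on
$$
X(T) \;:=\; \sup_{0 \le t \le T} \langle t \rangle^{\frac 35 - \frac{1}{1000}} \|\omega^{1/2} g(t)\|_{L^\infty}.
$$
The linear piece $e^{tL}g_0$ is bounded directly using the pointwise decay theorem with $\mu=\nu=\tfrac 12$ and $\delta=\tfrac{1}{2000}$, contributing $\lesssim \epsilon$ to $X(T)$. For the Duhamel integral, the target is a weighted bilinear bound of the form
$$
\|\omega^{-\nu_*} \mathcal{Q}(g,g)\|_{L^\infty} \;\lesssim\; \|\omega^{1/2} g\|_{L^\infty}^{\,2}
$$
for some $\nu_* \in [\tfrac 16,\tfrac 12]$ (and an analogous trilinear bound). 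Applying the pointwise decay theorem with $\mu=\tfrac 12$ and $\nu=\nu_*$ then yields
$$
\Bigl\|\omega^{1/2}\!\int_0^t e^{(t-s)L}\mathcal{Q}(g,g)(s)\,\dd s\Bigr\|_{L^\infty} \lesssim X(T)^2 \int_0^t \langle t-s\rangle^{-\frac 35(\frac 12+\nu_*)+\delta}\langle s\rangle^{-\frac 65 + \frac{2}{1000}} \dd s.
$$
Since the exponent $\tfrac 65$ exceeds $1$, this convolution is $\lesssim \langle t\rangle^{-\frac 35 + \frac{1}{1000}}$ provided $\delta$ and the gap $\tfrac 12 - \nu_*$ are chosen small enough; the natural choice $\nu_*=\tfrac 12$ succeeds, and the trilinear term contributes strictly better decay. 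Combining, $X(T) \lesssim \epsilon + X(T)^2 + X(T)^3$, which closes the bootstrap for $\epsilon < \epsilon_0$ sufficiently small.

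The hardest step will be the weighted nonlinear bound on $\mathcal{Q}(g,g)$. Schematically
$$
\mathcal{Q}(g,g)(p_0) \;=\; \int_0^{2\pi} \frac{\omega_0\omega_1\omega_2\omega_3}{\sqrt{F_+(p_0,p_2)}}\,\mathfrak{f}_1\mathfrak{f}_2\mathfrak{f}_3\cdot B(g_0,g_1,g_2,g_3)\,\dd p_2,
$$
with $p_3=p_0+p_1-p_2$, $p_1=h(p_0,p_2)$, and $B$ a bilinear combination of the $g_\ell$'s whose coefficients arise from the detailed-balance expansion. The difficulty is twofold: the kernel $1/\sqrt{F_+}$ is singular along curves in $(p_0,p_2)$-space, and one must extract the correct power of $\omega_0$ on the outside while controlling the product of the $g_\ell$ in terms of $\|\omega^{1/2}g\|_{L^\infty}$. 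The weight $\omega^{1/2}$ is calibrated precisely so that the pointwise bound $|g_\ell| \lesssim X(T)\langle t\rangle^{-3/5+1/1000}\omega_\ell^{-1/2}$ pairs favorably with the $\omega_\ell$ factors in the kernel after the substitution $p_1 = h(p_0,p_2)$, and the detailed-balance identity ensures the extra $(\beta\omega_\ell+\gamma)$ factors produced by the Taylor expansion do not create artificial singularities. The requisite integrability estimates are of the same nature as those used in Section \ref{sectionLWP} to prove boundedness of $\mathcal{C}$ in $\omega^\alpha L^\infty$ and in Section \ref{sectionpointwise} to prove linear pointwise decay, and should follow by a careful change of variables near the corners $p_0 \in\{0,\pi\}$ and the singular locus of $F_+$.
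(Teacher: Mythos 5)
Your strategy is essentially the paper's own: write $f=\mathfrak{f}_{\beta,\gamma}(1+g)$, observe that the nonlinearity is exactly quadratic plus cubic (no power series is needed -- $\prod_\ell(1+g_\ell)\cdot(1+g_{\ell'})^{-1}$ is already a polynomial, and the zeroth order term vanishes because $\sum\pm(\beta\omega_\ell+\gamma)=\beta\Omega=0$ on the resonant set), use conservation of mass and energy to keep $g$, and hence the Duhamel source terms, orthogonal to $\operatorname{Ker}L$, apply the pointwise decay Theorem \ref{theoremdecay} to the semigroup, and close a bootstrap. The step you flag as hardest is in fact the routine part and is exactly what the paper does in Lemma \ref{apriori}: no analysis near the singular locus of $F_+$ is required beyond $\sqrt{F_+}\geq 2\sqrt{\omega_0\omega_2}$ together with Lemma \ref{lemmaomega123} ($\omega_1\omega_2\omega_3\lesssim\omega_0$). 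With these, distributing three factors $\omega_\ell^{-1/2}$ leaves at worst an integrable singularity $\omega_2^{-1/2}$ in the $p_2$-integral, so $\|\omega^{-1/2}\mathcal{Q}(g,g)\|_{L^\infty}\lesssim\|\omega^{1/2}g\|_{L^\infty}^2$ and the analogous cubic bound do hold, and your convolution estimates then close as claimed (the paper itself uses the endpoint values $\mu=\nu=\tfrac12$ of Theorem \ref{theoremdecay}, as you do).

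The genuine difference, and the one gap, concerns your choice of bootstrap quantity. You control only $X(T)=\sup_t\langle t\rangle^{3/5-1/1000}\|\omega^{1/2}g(t)\|_{L^\infty}$, whereas the paper's norm \eqref{def:B_t norm} carries a second component $\langle t\rangle^{2/5-\delta}\|\omega^{1/6}g\|_{L^\infty}$. That component is not essential for the a priori estimate (the paper uses it to split the cubic term as $\|\omega^{1/6}g\|\|\omega^{1/3}g\|\|\omega^{1/2}g\|$, but three $\omega^{1/2}$-norms also work, as above); its real role is to interface with the local theory: the paper proves local well-posedness in $\omega^{-1/6}L^\infty$ (Lemma \ref{LWPg}) and obtains global existence by contradicting the blow-up criterion $\|\omega^{1/6}g(t)\|_{L^\infty}\to\infty$, a quantity which your norm does not control (finiteness of $\|\omega^{1/2}g\|_{L^\infty}$ allows $g\sim\omega^{-1/2}$ at the edges). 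Your proposal never states a local existence result or a continuation criterion, so as written the bootstrap yields an a priori bound but not a global solution. The fix is easy and stays within your framework: prove local well-posedness of the perturbation equation in $\omega^{-1/2}L^\infty$, with blow-up criterion in $\|\omega^{1/2}g\|_{L^\infty}$, by the same fixed-point argument as Theorem \ref{theo: Linfty lwp} (the linear part $L=-a+K$ and both nonlinear terms are bounded on that space using the same kernel inequalities); alternatively, simply add the $\omega^{1/6}$ component to your norm as the paper does. Either way the argument then matches the paper's.
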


The appendix gathers some elementary but involved computations which are used in the rest of the text.

\subsection{Stability of equilibria for the Boltzmann equation}
It is worthwhile comparing the results which have been stated above to  analogous statements for the spatially homogeneous Boltzmann equation. In the case of the classical Boltzmann equation, the nonlinearities are quadratic, unlike the cubic nonlinearities of the wave turbulence equation, and the equilibrium is the Maxwellian distribution. Depending on the collision kernel, it is known that the linearized operator around the equilibrium possesses a spectral gap (for instance in the hard sphere case), leading to exponential convergence to equilibrium.  However, in the soft potential case, there is no spectral gap, and the decay of the linearized operator is at most of order $e^{-t^{\sigma}}$, $\sigma <1$. For more details, see the reviews and articles \cite{Villanireview,Caflish80,GressmanStrain10} and references therein. 

Roughly speaking, the slightly weaker decay under soft potentials can be attributed to the fact that the collision frequency in the kernel of the Boltzmann operator is not lower-bounded and degenerates at large velocities. However, since the perturbative regime is around a Maxwellian distribution, the initial data roughly resembles a Maxwellian, making the large velocity regime, where the spectral gap becomes zero, less significant.

In our case, we also encounter a degenerate spectral gap that becomes zero at the edges of the frequency domain. This type of degeneracy is very different in nature and significantly affects the decay of the linearized semigroup.

Finally, the spatially inhomogeneous Boltzmann equation has been extensively studied, presenting additional mathematical challenges related to the existence of local equilibria. The theory of hypocoercivity has been developed for such inhomogeneous kinetic models, see \cite{ICMVillani2007, DesvVill05, StrainGuo08} and references therein.

\subsection*{Acknowledgements} Pierre Germain was supported by a Wolfson fellowship from the Royal Society and the Simons collaboration on Wave Turbulence. Joonhyun La acknowledges support from June Huh fellowship at Korea Institute for Advanced Study. Angeliki Menegaki acknowledges support from a Chapman fellowship at Imperial College London. 

\section{Notations}
For quantities $A$ and $B$ and parameters $a,b,c$, we write $A \lesssim_{a,b,c} B$ if there exists a constant $C(a,b,c)$ such that $A \leq C(a,b,c) B$.

We write $A \sim B$ if $A \lesssim B$ and $B \lesssim A$.

We write $C$ for a positive constant whose value may change from line to line.

If $b \in \mathbb{R}$, an exponent $b+$ is to be interpreted as follows: $f(t) \lesssim t^{b+}$ means that
$$
f(t) \lesssim_\epsilon t^{b+\epsilon} \qquad \mbox{for any $\epsilon>0$}.
$$

\section{Local well-posedness}
\label{sectionLWP}

\subsection{Local well-posedness in weighted $L^\infty$ spaces}

We consider here the Cauchy problem
$$
\begin{cases}
& \partial_t f = \mathcal{C}[f](p) \\
& f(t=0) = f_0,
\end{cases}
$$
where the collision operator is given in \eqref{formulacollision}.

\begin{theorem} \label{theo: Linfty lwp}
This Cauchy problem is locally well-posed in $\omega^{\alpha} L^\infty(\mathbb{T})$ if $\alpha \geq -1$. More precisely, if $f_0 \in \omega^\alpha L^\infty$, there exists a unique solution 
$f \in \mathcal{C}([0,T],\omega^\alpha L^\infty)$, where 
$$
T \gtrsim \| f_0 \|_{\omega^\alpha L^\infty}^{-2}.
$$
\end{theorem}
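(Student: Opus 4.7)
My approach is to recast the problem as an ODE in the Banach space $X_\alpha := \omega^\alpha L^\infty(\mathbb{T})$ and solve it by a standard Banach fixed-point argument in $C([0,T]; X_\alpha)$. Since the right-hand side in \eqref{formulacollision} is a cubic polynomial expression in the values of $f$, the whole analysis comes down to a single trilinear boundedness inequality for the collision operator,
\[
\|\mathcal{C}[f]\|_{X_\alpha} \leq C_\alpha \|f\|_{X_\alpha}^{3} \qquad (\alpha \geq -1),
\]
together with its automatic polarization $\|\mathcal{C}[f]-\mathcal{C}[g]\|_{X_\alpha} \lesssim (\|f\|_{X_\alpha}^{2}+\|g\|_{X_\alpha}^{2})\|f-g\|_{X_\alpha}$, which follows by writing $\mathcal{C}[f]-\mathcal{C}[g]$ as a sum of three terms each containing one factor $f-g$ and two factors of $f$ or $g$. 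Granting this, the Duhamel map $\Phi(f)(t) := f_0 + \int_0^{t}\mathcal{C}[f(s)]\,ds$ is stable on the ball $\{f \in C([0,T];X_\alpha): \|f-f_0\|_{C([0,T];X_\alpha)} \leq \|f_0\|_{X_\alpha}\}$ and is a contraction as soon as $T\|f_0\|_{X_\alpha}^{2} \ll 1$, producing the claimed lifespan $T \gtrsim \|f_0\|_{X_\alpha}^{-2}$. Uniqueness and continuous dependence on the data fall out of the same estimates.

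To prove the trilinear estimate, expand the cubic bracket of \eqref{formulacollision},
\[
\prod_{\ell=0}^{3} f_\ell \Bigl(\tfrac{1}{f_0}+\tfrac{1}{f_1}-\tfrac{1}{f_2}-\tfrac{1}{f_3}\Bigr) = f_1 f_2 f_3 + f_0 f_2 f_3 - f_0 f_1 f_3 - f_0 f_1 f_2,
\]
and, using that $f \in X_\alpha$ forces $|f_\ell|$ to be controlled by $\|f\|_{X_\alpha}$ times the appropriate power of $\omega_\ell$, reduce the pointwise bound on $\mathcal{C}[f](p_0)$ to four weighted kernel inequalities of the schematic form
\[
\omega_0^{a} \int_0^{2\pi} \frac{\omega_1^{b}\, \omega_2^{b'}\, \omega_3^{b''}}{\sqrt{F_+(p_0,p_2)}}\,dp_2 \lesssim 1 \qquad \text{uniformly in } p_0 \in \mathbb{T},
\]
where $p_1 = h(p_0,p_2)$, $p_3 = p_0+p_1-p_2$ as in \eqref{formulacollision}, and the exponents $a,b,b',b''$ depend on $\alpha$ and on which of the four cubic monomials is being estimated. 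The pattern is such that the tightest version arises at the endpoint $\alpha = -1$, which is what dictates the lower bound on $\alpha$ in the theorem.

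The main obstacle is this kernel estimate. Two kinds of singularities have to be controlled simultaneously: (i) the factor $1/\sqrt{F_+}$ develops an integrable inverse-fourth-root singularity along the curves where $F_+(p_0,\cdot)$ vanishes, which delineate the resonance manifold, and (ii) the weights $\omega_\ell$ degenerate at $p_\ell \in \{0,\pm\pi\}$, which can interact with the prefactor $\omega_0^a$ precisely when $p_0$ is itself near such a degeneracy. My plan is to split $\mathbb{T}$ into a bulk region, where $F_+$ is bounded below and all $\omega_\ell$ are of order one (so the estimate is immediate), and a finite number of patches localized near each singular curve of $F_+$ and each zero of some $\omega_\ell$. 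On each patch one parameterizes $p_1$ via $h(p_0,p_2)$, introduces a local variable that resolves the $F_+$ singularity into a standard $s^{-1/2}$, and matches the resulting powers of $\omega_\ell$ against the prefactor $\omega_0^a$. The restriction $\alpha \geq -1$ is exactly what keeps the exponents on the auxiliary weights nonnegative, so that they cannot create new divergences. Once this technical lemma is in hand the rest of the proof of Theorem \ref{theo: Linfty lwp} is formal.
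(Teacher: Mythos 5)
Your overall scaffolding (Duhamel map, a trilinear bound $\|\mathcal{C}[f]\|_{\omega^\alpha L^\infty}\lesssim\|f\|_{\omega^\alpha L^\infty}^3$ plus its polarization, contraction on $[0,T]$ with $T\gtrsim\|f_0\|_{\omega^\alpha L^\infty}^{-2}$) is exactly the paper's and is fine. The genuine gap is that the one estimate the theorem actually rests on, the trilinear kernel bound, is only announced as a plan, and the plan is built on a wrong picture of where the singularities sit. The weight $\omega(p)=|\sin(p/2)|$ vanishes only at $p\equiv 0 \pmod{2\pi}$, not at $p=\pm\pi$; and $F_+$ does not vanish along curves ``delineating the resonance manifold'': one has the elementary pointwise bound $F_+(p_0,p_2)=\bigl[\cos(p_0/2)+\cos(p_2/2)\bigr]^2+4\sin(p_0/2)\sin(p_2/2)\ge 4\,\omega_0\omega_2$ on $[0,2\pi]^2$, so $1/\sqrt{F_+}$ is singular only where $\omega_0\omega_2\to0$; square-root vanishing along curves is a feature of $F_-$ in the $p_1$-parameterization (lemmas \ref{lemmaintegrationp1} and \ref{lemmazerosF-}), which does not enter this proof at all. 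So the patch-by-patch desingularization you propose is aimed at the wrong object.

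More importantly, the ``matching of the powers of $\omega_\ell$ against the prefactor'' that you defer is precisely the nontrivial content, and it cannot be closed by local coordinates alone: for the monomial $f_1f_2f_3$ with output weight $\omega^{-\alpha}$ (the dangerous case when $\alpha>0$) one needs a comparison between $\omega_0$ and $\omega_1\omega_2\omega_3$ valid on the resonant manifold $p_1=h(p_0,p_2)$, $p_3=p_0+p_1-p_2$. The paper supplies it, namely $\omega_1\omega_2\omega_3\lesssim\omega_0$ (Lemma \ref{lemmaomega123}), and combines it with $\sqrt{F_+}\ge 2\sqrt{\omega_0\omega_2}$; after these two pointwise inequalities every monomial reduces to an explicit product of powers of the $\omega_\ell$, whose exponents on $\omega_1,\omega_3$ are $1+\alpha$ (nonnegative exactly when $\alpha\ge-1$, which is the true role of the threshold) and whose worst $\omega_2$ power is integrable in $p_2$; no patching or change of variables is needed and the bound follows in a few lines. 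Without this resonant-manifold inequality (or an equivalent substitute), your scheme does not establish the claimed uniform kernel estimates, and hence neither the boundedness of $\mathcal{C}$ on $\omega^\alpha L^\infty$ nor the theorem.
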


\begin{remark} What is the significance of the above theorem? The simplest case is $\alpha=0$, which simply gives local well-posedness in $L^\infty$. The case $\alpha =-1$ is important since $\omega^{-1} L^\infty$ is a function space which includes the singular Rayleigh-Jeans equilibria $\mathfrak{f}_{\beta,0}$. Finally, the case $\alpha \neq 0$ can be interpreted as the propagation by the nonlinear problem of the vanishing or singular behavior at $p=0 \mod 2\pi$ - related ideas will play an important role in the nonlinear stability questions examined later in this paper.
\end{remark}

\begin{proof} We claim that the collision operator is bounded on $\omega^\alpha L^\infty$. Taking this fact for granted for a moment, the equation can be written via Duhamel's formula as
$$
f = f_0 + \Phi[f] \quad \mbox{where} \quad \Phi[f] = \int_0^t \mathcal{C}[f](s) \dd s
$$
and the mapping $\Phi$ can be bounded in $\mathcal{C}([0,T],\omega^\alpha L^\infty)$ by
$$
\| \Phi[f] \|_{\mathcal{C}([0,T],\omega^\alpha L^\infty)} \lesssim T \| f \|_{\mathcal{C}([0,T],\omega^\alpha L^\infty)}^3. 
$$
Local well-posedness is then an easy consequence of the Banach fixed point theorem.

We now turn to proving boundedness of the collision operator in $\omega^\alpha L^\infty$. 
The denominator in the integrand in \eqref{formulacollision} is bounded by 
\begin{equation*}
\sqrt{ F_+(p_0,p_2)} = \sqrt{\left[ \cos \left( \frac{p_0}{2}\right) +\cos \left( \frac{p_2}{2}\right) \right]^2 +4\sin\left( \frac{p_0}{2}\right)\sin\left( \frac{p_2}{2}\right)} \geq 2 \sqrt{\omega_0 \omega_2}
\end{equation*}
implying that 
\begin{equation} 
\begin{split} 
|\mathcal{C}[f](t,p_0)| &\leq \int_0^{2\pi} \sqrt{\omega_0 \omega_2} \omega_1 \omega_3 \left| \prod_{\ell=0}^3 f_\ell \left(\frac{1}{f}+\frac{1}{f_1}-\frac{1}{f_2}-\frac{1}{f_3}\right) \right|
d p_2.
\end{split}
\end{equation} 

We now resort to the inequalities
\begin{align*}
& \sqrt{\omega_0 \omega_2} \omega_1 \omega_3  \omega^{-\alpha} \prod_{\ell=0}^3 \omega_\ell^{\alpha} \left({\omega^{-\alpha}}+ {\omega_1^{-\alpha}}+{\omega_2^{-\alpha}}+{\omega_3^{-\alpha}}\right)
\lesssim
\begin{cases}
\omega_2^{\frac{1}{2}-\alpha} & \mbox{if $-1 \leq \alpha \leq 0$} \\
1 & \mbox{if $\alpha \geq 0$}
\end{cases}
\end{align*}
(the second inequality being a consequence of $\omega_0 \lesssim \omega_1 \omega_2 \omega_3$, see Lemma \ref{lemmaomega123}).

As a consequence,
$$
\| \omega^{-\alpha} \mathcal{C}[f] \|_{ L^\infty} \lesssim \|\omega^{-\alpha} f \|_{L^\infty}^3 \int_0^{2\pi} \max(1,\omega_2^{\frac{1}{2} - \alpha}) \dd p_2 \lesssim \|\omega^{-\alpha} f \|_{L^\infty}^3.
$$
\end{proof}

\subsection{Unboundedness in weighted $L^p$ spaces for $p<3$}
In the preceding subsection, we saw that local well-posedness in weighted $L^\infty$ spaces was a consequence of the boundedness of the collision operator in these spaces. In the present section, we show that the collision operator cannot be bounded on $L^p$-type spaces for $p<3$. This does not imply ill-posedness, but this shows that well-posedness can only be the result of a very delicate nonlinear mechanism.

From now on we will write
$$
\underline{h}(x,z) = x-z + h(x,z) =  \frac{x-z}{2} + 2 \arcsin \left( \tan \frac{|z-x|}{4} \cos \frac{z+x}{4} \right)  \mod 2\pi.
$$

\begin{theorem} \label{thmunbounded}
The collision operator $\mathcal{C}$ is not bounded on $L^p(\mathbb{T})$ if $p<3$.
\end{theorem}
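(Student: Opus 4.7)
The plan is to find a sequence $(f_\epsilon)\subset L^p(\mathbb{T})$ with $\|f_\epsilon\|_{L^p}$ uniformly bounded but $\|\mathcal{C}[f_\epsilon]\|_{L^p}\to\infty$ as $\epsilon\to 0$. The starting point is the observation that after expanding the bracket,
$$\prod_\ell f_\ell\Bigl(\tfrac{1}{f_0}+\tfrac{1}{f_1}-\tfrac{1}{f_2}-\tfrac{1}{f_3}\Bigr) = f_1f_2f_3 + f_0f_2f_3 - f_0f_1f_3 - f_0f_1f_2,$$
$\mathcal{C}$ is a genuine trilinear operator in $f$, homogeneous of degree three. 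It therefore suffices to produce $f_\epsilon$ for which the ratio $\|\mathcal{C}[f_\epsilon]\|_{L^p}/\|f_\epsilon\|_{L^p}^3$ is arbitrarily large.

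The structural source I would exploit is the degeneracy of the collision manifold at the origin. A Taylor expansion of the parameterization $p_1 = h(p_0,p_2)$ given in the introduction shows that for small $p_0, p_2 > 0$ with $p_2 > p_0$, one has $p_1 = (p_2-p_0) + O(\text{cubic})$, and hence
$$p_3 = p_0 + p_1 - p_2 \approx -\frac{p_0\, p_2(p_2-p_0)}{16},$$
vanishing to cubic order as $(p_0,p_2)\to(0,0)$. By the coarea formula, the push-forward of Lebesgue measure on $(p_0, p_2)$ under $(p_0, p_2)\mapsto p_3$ carries a density $\sim |p_3|^{-1/3}$ near $p_3 = 0$, and this critical exponent $1/3$ is exactly what produces the threshold at $p = 3$: heuristically, for $f(p)\sim\omega(p)^{-a}$ with $a$ slightly below $1/p$ (so $f\in L^p$), the evaluation $f(p_3)$ in the integrand is amplified by this density factor, and the resulting loss becomes fatal precisely when $p < 3$.

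Based on this picture, the test functions I would try are $f_\epsilon(p) = \omega(p)^{-a}\phi_\epsilon(p)$, where $\phi_\epsilon$ is a cutoff localizing near $p = 0$ \emph{asymmetrically}, with $a \in (0, 1/p)$ so that $f_\epsilon \in L^p$ with controlled norm. The asymmetry is crucial, because $\mathcal{C}$ annihilates the Rayleigh-Jeans profiles $\mathfrak{f}_{\beta,\gamma}$; for any symmetric power ansatz $f = \omega^{-a}$, the four terms of $\mathcal{C}$ each produce a contribution of the same leading order near $p_0 = 0$, and these cancel against each other, leaving only a harmless higher-order residue. A one-sided cutoff such as $\phi_\epsilon = \mathbf{1}_{[\epsilon, 2\epsilon]}$ (rather than one symmetric about $0$) forces some of the $f_i$-factors in the bracket to vanish in most of the $p_2$-integration range, thereby defeating the fourfold cancellation and bringing the degenerate $p_3$-contribution to the surface.

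The main obstacle is precisely this cancellation. The technical heart of the argument will be to split the $p_2$-integral into the three regimes $p_2 > p_0$, $0 < p_2 < p_0$, and $p_2 < 0$ (in each of which the formula for $p_1 = h(p_0,p_2)$ and hence for $p_3$ has a qualitatively different behavior, see the Appendix) and to identify, for a suitably asymmetric $\phi_\epsilon$, which of $S_1, S_2, -S_3, -S_4$ survives in each regime once the non-surviving terms have been eliminated by the support condition. A careful pointwise lower bound on the surviving terms, leveraging the $|p_3|^{-1/3}$ degeneracy above, should then yield $|\mathcal{C}[f_\epsilon](p_0)| \gtrsim \epsilon^{-\delta}\, g(p_0)$ on a suitable region, with $\|g\|_{L^p}$ of order one and $\delta > 0$ strictly positive exactly when $p < 3$, which proves the unboundedness.
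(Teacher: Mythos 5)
Your plan locates the unboundedness mechanism at the edge $p=0$, but this cannot produce the threshold $p=3$, for two concrete reasons. First, you never account for the factor $\omega_0\omega_1\omega_2\omega_3$ in the kernel of \eqref{formulacollision}. In the corner configuration $p_0,p_2\to 0$ the frequency that becomes cubically small is precisely one of the $\omega_\ell$ in this product (this is the content of Lemma \ref{lemmaomega123}: $\omega_1\omega_2\omega_3\lesssim \omega_0$), so the kernel vanishes at exactly the cubic rate at which the evaluation $f\sim \omega^{-a}$ at the cubically small frequency blows up; since $a<1/p\le 1$, the net power is positive. A direct count with $f=\omega^{-a}$ near the edge gives an integrand of size about $p_0^{2-a}p_2^{2-2a}|p_0-p_2|^{2-2a}$, whose $p_2$-integral is bounded (in fact tends to $0$ as $p_0\to0$); so the edge degeneracy yields no unboundedness at all, let alone sharply at $p=3$ — consistent with Theorem \ref{theo: Linfty lwp}, which shows the operator is bounded even on $\omega^{-1}L^\infty$, i.e.\ edge singularities as strong as the singular RJ profile are harmless. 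The assertion that the $|p_3|^{-1/3}$ push-forward density "produces the threshold at $p=3$" is stated but never derived, and the count above shows it is not correct. Second, the proposed one-sided localization is incompatible with the geometry of the resonant manifold: with $\phi_\epsilon=\mathbf 1_{[\epsilon,2\epsilon]}$, a nontrivial resonant quadruple with $p_0,p_2\in[\epsilon,2\epsilon]$ has one of $p_1,p_3$ of size $\approx|p_0-p_2|<\epsilon$ and the other cubically small or near $2\pi$, so (up to negligible boundary configurations) no quadruple has three frequencies in the support; every term of the trilinear bracket then vanishes, and the step "a one-sided cutoff defeats the fourfold cancellation" has nothing left to act on.

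The paper's proof uses a genuinely different, bulk mechanism. One fixes $p_0^0$ in the interior and takes $p_2^0$ to be the maximum point of $z\mapsto h(p_0^0,z)$, which forces $p_3^0=p_2^0$, with all points away from $0$ so the kernel is bounded below. The test function is a sum of three $L^p$-normalized bumps of \emph{different widths}: height $\epsilon^{-2/p}$ on intervals of length $\epsilon^2$ at $p_0^0,p_1^0$ and height $\epsilon^{-1/p}$ on an interval of length $\epsilon$ at $p_2^0$. Because $h(p_0,\cdot)$ is quadratically degenerate at its maximum, constraining $p_1$ to a window of width $\epsilon^2$ still lets $p_2$ range over a set of measure $\sim\epsilon$; hence the term $f_0f_1f_2$ (which involves no evaluation at the fourth frequency) contributes $\gtrsim\epsilon^{1-5/p}$ for $p_0$ in a set of measure $\epsilon^2$, while the opposite-sign terms are only $O(\epsilon^{1-4/p})$, so no cancellation can occur. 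This gives $\|\mathcal C[f^\epsilon]\|_{L^p}\gtrsim\epsilon^{1-3/p}$, i.e.\ unboundedness exactly for $p<3$: the exponent $3$ comes from trilinearity combined with the square-root gain at a critical point of $h$ in the bulk, not from the edge tangency. To salvage your approach you would have to move the concentration into the interior and identify a comparable geometric gain — which is essentially the paper's construction.
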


\begin{proof} \underline{Three specific points.} Given $x_0$, we choose $z_0$ such that $z \mapsto h(x_0 ,z)$ reaches its global maximum at $z_0$ (this maximum is unique modulo $2\pi$ for almost all values of $x_0$). It follows from this choice that $\underline{h}(x_0,z_0) = z_0$ (modulo $2\pi$). Indeed, there holds by symmetry between $p_2$ and $p_3$: $h(x_0,z_0) = h(x_0, \underline{h}(x_0,z_0))$; since $z \mapsto h(x_0 ,z)$ reaches its global maximum at $z_0$, this implies that $\underline{h}(x_0,z_0) = z_0$.

We set then
$$
p_0^0 = x_0, \quad p_1^0 = h(x_0,z_0), \quad p_2^0 = z_0, \quad p_3^0 = \underline{h}(x_0,z_0) = p_2^0.
$$

We claim that we can choose $x_0$ such that
\begin{enumerate}
\item The numbers $p^0_1,p^0_1, p^0_2$ are distinct and different from $0$ (modulo $2\pi$).
\item If the $(p_i)$ belong to the resonant manifold, if furthermore $p_0 = p_0^0$ and if finally $(p_2,p_3) = (p^0_{i_2},p^0_{i_3})$ for some $\{i_2,i_3\} \in \{0,1,2 \}$, then $(p_1,p_2,p_3) = (p_1^0,p_2^0,p_3^0)$.
\end{enumerate}

It is easiest to give a concrete example and to choose $x_0 =2$ and $z_0$ to be the local maximum of $z \mapsto h(x_0,z)$, or in other words
$$
p_0^0 = 2, \quad p_1^0 = 1.184..., \quad p_2^0 = 4.733...
$$
We now fix $p_0 = p_0^0 = 2$, and try all combinations of $i_2,i_3$ as in property (2) above
\begin{itemize}
\item If $p_2 = p^0_0$, then $p_3 = \underline{h}(p_0^0,p_0^0) = 0$.
\item If $p_2 = p^0_1$, then $p_3 = \underline{h}(p_0^0,p_0^1) = 0.698...$.
\item If $p_2 = p_2^0$, then $p_1= h(p_0^0,p_2^0) = p_1^0$ and $p_3 = \underline{h}(p_0^0,p_2^0) = p-_2$ modulo $2\pi$.
\end{itemize}

\medskip 
\noindent
\underline{The example giving unboundedness of the collision operator.}
We will test the collision operator on the following family of functions, which are uniformly bounded in $L^p$:
$$
f^\epsilon = \epsilon^{-\frac{2}{p}} \mathbf{1}_{[p_0^0,p_0^0+\epsilon^2]} + \epsilon^{-\frac{2}{p}} \mathbf{1}_{[p^0_1,p^0_1+\epsilon^2]} + \epsilon^{-\frac{1}{p}} \mathbf{1}_{[p^0_2,p^0_2+\epsilon]}.
$$

We will denote $K$ the integration kernel in the collision operator; it only vanishes if $p_i = 0$ for some $i=1,2,3,4$. The collision operator can be naturally split into a positive and a negative part
\begin{align*}
\mathcal{C}[f](p_0) & =  \int_0^{2\pi} K(p_0,p_2)\prod_{\ell=0}^3 f_\ell^\epsilon \left(\frac{1}{f^\epsilon}+\frac{1}{f_1^\epsilon}-\frac{1}{f_2^\epsilon}-\frac{1}{f_3^\epsilon}\right) \,dp_2 \\
& = \mathcal{C}_+[f](p_0) - \mathcal{C}_-[f](p_0),
\end{align*}
where it is understood that the variables $(p_0,p_1,p_2,p_3)$ in the integral are parameterized as $(p_0,h(p_0,p_2),p_2,\underline{h}(p_0,p_2))$.

On the one hand, if $|p_0-p_0^0| \ll \epsilon^2$, it follows from property (2) above that
\begin{align*}
\mathcal{C}_+[f^\epsilon](p_0) & = \int_0^{2\pi} K(p_0,p_2) \left[  f^\epsilon_1 f^\epsilon_2 f^\epsilon_3 + f^\epsilon_0 f^\epsilon_2 f^\epsilon_3  \right] \,dp_2 \\
& \lesssim \epsilon^{-\frac 4p} \int_0^{2\pi}  \left[  \mathbf{1}_{[p_1^0,p_1^0+\epsilon^2]} (p_1) \mathbf{1}_{[p_2^0,p_2^0+\epsilon]}(p_2) \mathbf{1}_{[p_2^0,p_2^0+\epsilon]}(p_3) \right. \\
& \qquad\qquad \qquad \qquad \left. +  \mathbf{1}_{[p_0^0,p_0^0+\epsilon^2]}(p_0)  \mathbf{1}_{[p_2^0,p_2^0+\epsilon]}(p_2) \mathbf{1}_{[p_2^0,p_2^0+\epsilon]}(p_3)  \right] \,dp_2 \\
& \lesssim \epsilon^{1 - \frac 4p}.
\end{align*}

On the other hand, still assuming $|p_0-p_0^0| \ll \epsilon^2$,
\begin{align*}
\mathcal{C}_-[f^\epsilon](p_0) & \geq \int_0^{2\pi} K(p_0,p_2)  f^\epsilon_0 f^\epsilon_1 f^\epsilon_2 \,dp_2 \\
& = \epsilon^{-\frac 5 p} \mathbf{1}_{[p_0^0,p_0^0+\epsilon^2]} (p_0)  \int_0^{2\pi} K(p_0,p_2) \mathbf{1}_{[p^0_1,p^0_1+\epsilon^2]} (p_1) \mathbf{1}_{[p^0_2,p^0_2+\epsilon]} (p_2) \,dp_2 \\
& \geq \epsilon^{1 - \frac 5p}.
\end{align*}

Overall, we get that that, if $|p_0 - p_0^0| \ll \epsilon^2$,
$$
\left| \mathcal{C}[f^\epsilon](p_0) \right| \gtrsim \epsilon^{1 - \frac 5p}.
$$
This implies that
$$
\| \mathcal{C}[f^\epsilon] \|_{L^p} \gtrsim \epsilon^{1 - \frac 3p},
$$
from which the desired result follows since $\| \mathcal{C}[f^\epsilon] \|_{L^p} \to \infty$ as $\epsilon \to 0$ if $p<3$.
\end{proof}

\section{First properties of the linearized operator}

\label{sectionfirstproperties}

Recall that the Rayleigh-Jeans equilibria are given by
$$ \mathfrak{f}(k) = \mathfrak{f}_{\beta, \gamma} (k) = \frac{1}{\beta \omega(k) + \gamma}, \qquad \beta,\gamma >0.$$

We now linearise around $\mathfrak{f}(k)$: If $f(t,k) = \mathfrak{f}(k) (1+ g(t,k))$, then $g(t,k)$ satisfies the following equation
\begin{equation} 
\begin{split} 
& \partial_tg(t,p)  = \mathfrak{f}(p)^{-1} \int_{\mathbb{T}^3} 
 \delta(\Sigma) \delta(\Omega)
  \left[ \prod_{\ell=0}^3 \omega_\ell \mathfrak{f}(p_\ell) \right]\times \\
  & \prod_{\ell=0}^3 (1+ g(t,p_\ell) ) \left(\frac{1- g(t,p)}{\mathfrak{f}(p)}+\frac{1- g(t,p_1)}{\mathfrak{f}(p_1)}-\frac{1- g(t,p_2)}{\mathfrak{f}(p_2)}-\frac{1- g(t,p_3)}{\mathfrak{f}(p_3)}\right)\,\dd p_{1,2,3} + \mathcal{O}(g^2)\\ 
& = [L g](k) + \mathcal{O}(g^2). 
 \end{split}
\end{equation} 
where
$$
[L g](p) = 
 \frac{1}{\mathfrak{f}(p)} \int_{\mathbb{T}^3} 
 \delta(\Sigma) \delta(\Omega)
  \left[ \prod_{\ell=0}^3 \omega_\ell \mathfrak{f}_\ell \right]
  \left[ -\frac{g}{\mathfrak{f}}-\frac{g_1}{\mathfrak{f}_1}+\frac{g_2}{\mathfrak{f}_2}+\frac{g_3}{\mathfrak{f}_3}\right]\,\dd p_{1,2,3}.
$$

The operator $L$ is symmetric in $L^2$ and the Dirichlet form is positive, namely
$$ \langle -L g,g \rangle = \frac{1}{4} \int \delta({\Sigma})\delta({\Omega}) \left[ \prod_{\ell=0}^3 \omega_\ell \mathfrak{f}_\ell \right] \left[ \frac{g_3}{\mathfrak{f}_3}+ \frac{g_2}{\mathfrak{f}_2} - \frac{g}{\mathfrak{f}}  - \frac{g_1}{\mathfrak{f}_1}\right]^2 \dd p_{1,2,3} \geq 0 .$$
 We split the linearised operator $L$ into $L = -A+ K$, where $A$ is the multiplication operator
$$
[Ag](p)= a(p) g(p), \qquad a(p) = \frac{\omega(p)}{\mathfrak{f}(p)}\int_{I^3}  \delta(\Sigma) \delta(\Omega)
\left[ \prod_{\ell=1}^3 \omega_\ell \mathfrak{f}_\ell \right]
\dd p_{1,2,3}
$$
and $K$ is the integral operator
$$
K = - K_1 + 2 K_2, \qquad 
\begin{cases}
& \displaystyle [K_1 g](p) = \frac{1}{\mathfrak{f}(p)} \int \delta(\Sigma) \delta(\Omega) \left[ \prod_{\ell=0}^3 \omega_\ell \mathfrak{f}_\ell \right] \frac{g_1}{\mathfrak{f}_1} \, \dd p_{1,2,3} \\
& \displaystyle [K_2 g](p) = \frac{1}{\mathfrak{f}(p)} \int \delta(\Sigma) \delta(\Omega) \left[ \prod_{\ell=0}^3 \omega_\ell \mathfrak{f}_\ell \right] \frac{g_2}{\mathfrak{f}_2} \, \dd p_{1,2,3}.
\end{cases}
$$
One checks that $K_1$ and $K_2$ are symmetric operators.

With the help of the technical lemmas \ref{lemmaintegrationp1} and \ref{lemmaintegrationp2}, the multiplier $a$ and the kernels $K_1$ and $K_2$ can be written as (we always abuse notations and identify an operator and its kernel)
\begin{equation}
\label{chardonneret}
\begin{split}
& a(p) = \frac{\omega(p)}{\mathfrak{f}(p)} 
\int_0^{2\pi}
\omega_1 \omega_2 \omega_3 \mathfrak{f}_1 \mathfrak{f}_2 \mathfrak{f}_3 \frac{\dd p_2}{\sqrt{ F_+(p,p_2)}}, \qquad p_1 = h(p,p_2),\; p_3 = p + p_1 - p_2 \\
& K_1(p,p_1) = \frac{\mathbf{1}_{F_-(p,p_1)>0}}{\sqrt{F_-(p,p_1)}}\omega  \omega_1 \omega_2 \omega_3 \mathfrak{f}_2 \mathfrak{f}_3, \qquad p_1 = h(p,p_2),\; p_3 = p + p_1 - p_2 \\
& K_2(p,p_2) = \frac{1}{\sqrt{F_+(p,p_2)}} \omega  \omega_1 \omega_2 \omega_3 \mathfrak{f}_1 \mathfrak{f}_3, \qquad p_1 = h(p,p_2),\; p_3 = p + p_1 - p_2.
\end{split}
\end{equation}

The next four lemmas lay out the basic properties of the linear operators $L,A,K_1,K_2$. These results already appear in \cite{LukkarinenSpohn2008}, except for the case $\gamma \neq 0$ of Lemma \ref{asymptoticsmultiplier}.

\begin{lemma}[Kernel of the linearized operator]
The kernel of $L$ is spanned by $\mathfrak{f}$ and $\omega \mathfrak{f}$.
\end{lemma}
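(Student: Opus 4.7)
The plan is to identify the kernel via the Dirichlet form. Since $-L$ is symmetric with the non-negative quadratic form displayed in the excerpt, $Lg=0$ if and only if $\langle -Lg,g\rangle=0$. Because the integrand of the Dirichlet form is a non-negative square weighted by $\prod_\ell \omega_\ell \mathfrak{f}_\ell$, which is strictly positive away from $\{p_\ell=0\}$, vanishing of the form is equivalent to
\begin{equation*}
\frac{g_0}{\mathfrak{f}_0}+\frac{g_1}{\mathfrak{f}_1}-\frac{g_2}{\mathfrak{f}_2}-\frac{g_3}{\mathfrak{f}_3}=0 \qquad \text{a.e. on the resonant manifold } \{\Sigma=\Omega=0\}.
\end{equation*}
Setting $\phi:=g/\mathfrak{f}$, the task reduces to classifying all collision invariants $\phi$ with $\phi_0+\phi_1=\phi_2+\phi_3$ on the resonant manifold.

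Two obvious invariants are $\phi\equiv 1$, arising from $\Sigma=0$ (momentum conservation), and $\phi=\omega$, arising from $\Omega=0$ (energy conservation); these give the inclusion $\mathrm{span}(\mathfrak{f},\omega\mathfrak{f})\subseteq \ker L$. For the reverse inclusion, parameterize the non-trivial zeros as in Lemma \ref{lemmaparameterization} by $p_1=h(p_0,p_2)$, $p_3=p_0+p_1-p_2$. Differentiating both $\omega_0+\omega_1-\omega_2-\omega_3=0$ and $\phi_0+\phi_1-\phi_2-\phi_3=0$ with respect to $p_2$ at fixed $p_0$, and eliminating $\partial p_1/\partial p_2$ between the resulting identities, one obtains the cross-ratio relation
\begin{equation*}
\frac{\phi'(p_1)-\phi'(p_3)}{\omega'(p_1)-\omega'(p_3)}=\frac{\phi'(p_2)-\phi'(p_3)}{\omega'(p_2)-\omega'(p_3)}.
\end{equation*}
As $(p_0,p_2)$ varies, $(p_1,p_2,p_3)$ sweeps out an open subset of $\mathbb{T}^3$, so the common ratio must be a constant $c$, whence $\phi'(p)=c\,\omega'(p)+c'$. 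Periodicity of $\phi$ on $\mathbb{T}$ together with $\int_0^{2\pi}\omega'(p)\,dp=0$ forces $c'=0$, giving $\phi=c\,\omega+c''$ and $g\in \mathrm{span}(\mathfrak{f},\omega\mathfrak{f})$.

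The principal technical obstacle is justifying the differentiation step when $\phi$ (equivalently $g$) is merely in $L^2$ or $L^\infty$: the invariant identity holds only almost everywhere on $\{\Sigma=\Omega=0\}$, so pointwise manipulation of derivatives of $\phi$ is not a priori meaningful. The standard remedy is to test the identity against a family of smooth compactly supported functions on the resonant manifold, effectively integrating the cross-ratio relation against mollifiers centered at $(p_1,p_2,p_3)$, and then passing to a pointwise Lebesgue-point formulation; this bootstraps enough regularity (in fact smoothness off $\{p=0\}$) to legitimize the derivations above. This is the route followed in \cite[Section~5]{LukkarinenSpohn2008}, to which the present argument defers for the detailed regularity analysis.
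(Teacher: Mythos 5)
The paper's own proof is a one-line citation: once the problem is reduced (as you do, via symmetry and non-negativity of the Dirichlet form) to the vanishing of $\frac{g_0}{\mathfrak{f}_0}+\frac{g_1}{\mathfrak{f}_1}-\frac{g_2}{\mathfrak{f}_2}-\frac{g_3}{\mathfrak{f}_3}$ on the resonant manifold, the span claim follows from the characterization of collisional invariants, Theorem 2.2 of \cite{LukkarinenSpohn2008}. Your reduction and the easy inclusion $\operatorname{span}(\mathfrak{f},\omega\mathfrak{f})\subseteq\operatorname{Ker}L$ are fine (a small quibble: $\phi\equiv 1$ is the trivial invariant corresponding to mass conservation, it does not ``arise from $\Sigma=0$''; indeed $\phi(p)=p$ is \emph{not} an invariant precisely because $\Sigma$ is only conserved modulo $2\pi$).

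The gap is in your reverse inclusion. First, the assertion that $(p_1,p_2,p_3)$ ``sweeps out an open subset of $\mathbb{T}^3$'' is false: the resonant manifold is two-dimensional (two constraints on four variables), and its projection to $(p_1,p_2,p_3)$ is the codimension-one hypersurface obtained by eliminating $p_0$ through $\Sigma=0$ and imposing $\Omega=0$, hence a set of measure zero in $\mathbb{T}^3$. So the collinearity of the points $(\omega'(p_i),\phi'(p_i))$ is only known for triples lying on that surface, and the jump to ``the common ratio must be a constant'' needs a genuine propagation/connectivity argument that you do not give. Second, and more seriously, the differentiation step presupposes regularity of $\phi=g/\mathfrak{f}$, whereas the kernel condition only yields the invariance identity almost everywhere for an $L^2$ function; classifying \emph{measurable} collision invariants for this dispersion relation is exactly the nontrivial content of the Lukkarinen--Spohn theorem, and your closing sentence simply defers it to their paper. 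As written, your argument therefore either contains an unjustified step or collapses to the same citation the paper makes; in the latter case the formal cross-ratio computation is redundant, and it is cleaner to invoke the characterization of collisional invariants directly, as the paper does.
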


\begin{proof}
This follows immediately from the characterization of collisional invariants, Theorem 2.2 in \cite{LukkarinenSpohn2008}.
\end{proof}

\begin{lemma}[Asymptotics of the multiplier function]
\label{asymptoticsmultiplier}
For any $\beta,\gamma>0$,
$$
a(p) \sim_{\beta,\gamma} \sin \left( \frac p 2 \right)^{\frac 53}.
$$
\end{lemma}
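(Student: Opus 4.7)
The plan is to separate off the prefactor and reduce to estimating the core integral, then perform a scaling analysis near the corner where $F_+$ degenerates. Since $\gamma>0$, the factors $\mathfrak{f}_i=1/(\beta\omega_i+\gamma)$ lie uniformly between $1/(\beta+\gamma)$ and $1/\gamma$ and can be absorbed into constants. Moreover $\omega(p)/\mathfrak{f}(p)=\omega(p)(\beta\omega(p)+\gamma)\sim_{\beta,\gamma}\omega(p)$ when $\omega(p)\ll 1$ and $\sim_{\beta,\gamma}1$ otherwise. The claim therefore reduces to showing
$$
J(p):=\int_0^{2\pi}\omega_1\omega_2\omega_3\,\frac{dp_2}{\sqrt{F_+(p,p_2)}}\sim \omega(p)^{2/3}.
$$

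For $p$ bounded away from $0$ on the torus, $F_+$ is strictly positive (it vanishes only at the corner $(p,p_2)=(0,2\pi)$), and the integrand is a continuous, nonnegative, not identically zero function, so compactness yields $J(p)\sim 1\sim\omega(p)^{2/3}$. For $p\to 0$, split $J=J_{\mathrm{bulk}}+J_{\mathrm{sing}}$, with the bulk corresponding to $p_2\in[0,2\pi-\delta_0]$ and the singular part to $p_2\in(2\pi-\delta_0,2\pi)$ for some small fixed $\delta_0>0$. In the bulk, $F_+\gtrsim 1$; since $h(0,p_2)=p_2$ the resonance forces $p_3\to 0$ as $p\to 0$, giving $\omega_3=O(p)$ and hence $J_{\mathrm{bulk}}(p)=O(p)$, which is subleading relative to $p^{2/3}$.

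The dominant contribution comes from $J_{\mathrm{sing}}$. Setting $\delta:=2\pi-p_2$, a Taylor expansion yields $F_+(p,2\pi-\delta)\sim \delta^4/64+p\delta$ uniformly for small $p,\delta$. A parallel asymptotic analysis of $p_1=h(p,2\pi-\delta)$ (exploiting $\arcsin(1-\varepsilon)\sim \pi/2-\sqrt{2\varepsilon}$) together with the energy conservation $\omega+\omega_1=\omega_2+\omega_3$ shows that in the natural scaling $\delta=p^{1/3}s$ one has $\omega_1,\omega_2,\omega_3\sim p^{1/3}$ uniformly for $s$ in any compact subset of $(0,\infty)$. Since $dp_2=p^{1/3}\,ds$, this gives
$$
J_{\mathrm{sing}}(p)\sim\int_0^{s_0}\frac{p\cdot s^3}{p^{2/3}\sqrt{s^4+s}}\,p^{1/3}\,ds=p^{2/3}\int_0^{s_0}\frac{s^3\,ds}{\sqrt{s^4+s}}\sim p^{2/3},
$$
the $s$-integrand being integrable at $0$ (it behaves like $s^{5/2}$) and bounded for $s$ of order one. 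The matching lower bound follows by restricting to a compact subinterval of $s$ on which each factor is bounded away from zero.

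The hard part will be the asymptotic expansion of $h(p,p_2)$ near the corner $(0,2\pi)$: concretely, controlling the branch behavior of the $\arcsin$ as its argument approaches $1$, and ensuring that the resulting expansions for $\omega_1$ and $\omega_3$ are uniform in $s$ across the range which carries the dominant contribution. These delicate but explicit calculations rely on the parameterization formulas of \cite{LukkarinenSpohn2008} and on the technical estimates collected in the appendix.
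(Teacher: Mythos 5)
Your reduction (absorb the $\mathfrak{f}_\ell$ factors, note $a(p)\sim\omega(p)J(p)$, and zoom in on the corner $(p,p_2)$ near $(0,2\pi)$) follows the same route as the paper, and the scaling exponent you extract is the right one, but the quantitative core has a genuine gap. First, the $s$-dependence of your rescaled integrand is wrong: on the window $\delta:=2\pi-p_2=p^{1/3}s$ one has $\omega_2\simeq\delta/2\simeq p^{1/3}s/2$, while the exact identity $\omega_1\omega_3=\tan^2\bigl(\tfrac{p_2-p}{4}\bigr)\omega_0\omega_2$ (Lemma \ref{lemmaomega2omega3}) gives $\omega_1\omega_3\simeq 4p\delta/(p+\delta)^2\simeq 4p^{2/3}/s$, so the numerator is $\omega_1\omega_2\omega_3\simeq 2p$, essentially independent of $s$ — not $p\,s^3$. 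This is not cosmetic: with your numerator the integrand grows like $s$ at large $s$, and the upper bound requires integrating over the whole singular region, i.e.\ up to $s\sim\delta_0 p^{-1/3}$, which would then give $J_{\mathrm{sing}}\sim 1$ rather than $p^{2/3}$. Second, you never estimate the part of the singular region outside your fixed window $s\le s_0$: the range $p^{1/3}\lesssim\delta\le\delta_0$, where $F_+\sim\delta^4$ and the numerator is $\simeq 2p$, contributes $\int 2p\,\delta^{-2}\,\mathrm{d}\delta\sim p^{2/3}$ — the \emph{same} order as the window — so it cannot be dropped; it must be computed for the upper bound. Third, your claim $F_+(p,2\pi-\delta)\sim\delta^4/64+p\delta$ "uniformly for small $p,\delta$" is false for $\delta\lesssim p$: the correct expansion is $(\delta^2-p^2)^2/64+p\delta$, which is $\sim p^4$ when $\delta\ll p^3$. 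That regime is in fact subleading, but your write-up neither verifies this nor covers $s$ near $0$ (your uniformity claim for $\omega_1,\omega_2,\omega_3$ is restricted to compact subsets of $(0,\infty)$, yet your integral starts at $s=0$). The lower bound via a compact $s$-interval is fine, as is the bulk bound $J_{\mathrm{bulk}}=O(p)$, and $p$ near $2\pi$ is handled by the symmetry $a(p)=a(2\pi-p)$, which you should state.

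The missing ingredient is precisely what the paper's appendix supplies and what you deferred as "the hard part": Lemma \ref{lemmaomega2omega3} converts the numerator into the explicit expression $\tan^2\bigl(\tfrac{p_2-p}{4}\bigr)\sin^2\bigl(\tfrac{p_2}{2}\bigr)\sin\bigl(\tfrac p2\bigr)$, so that in the variables $\epsilon=\sin(p/2)$, $s=\sin((2\pi-p_2)/2)$ the full integrand is comparable to $\frac{\epsilon s^2}{\epsilon^2+s^2}\bigl[(\epsilon^2-s^2)^2+4\epsilon s\bigr]^{-1/2}$, and matching upper and lower bounds over the \emph{entire} range follow from the elementary splitting $s<\epsilon^3$, $\epsilon^3<s<\epsilon^{1/3}$, $s>\epsilon^{1/3}$ (Lemmas \ref{lemmaasymptoticszero} and \ref{lemmaasymptoticsnonzero}); no branch analysis of the $\arcsin$ is then needed. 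If you prefer to keep your rescaling presentation, you must (i) replace the numerator asymptotics by the uniform statement $\omega_1\omega_2\omega_3\simeq 2p\delta^2/(p+\delta)^2$ on the whole singular region, and (ii) prove the upper bound over all of $0<\delta<\delta_0$, in particular the region $\delta\gtrsim p^{1/3}$, which contributes at order $p^{2/3}$ and not less.
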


\begin{proof} This result is proved in lemmas \ref{lemmaasymptoticszero} and \ref{lemmaasymptoticsnonzero}, for zero and non-zero mass respectively. \end{proof}

\begin{lemma}[Compactness of the weighted $K_2$ operator]
For any $\beta$, $\gamma$, the operator $a^{-\frac 12} K_2 a^{-\frac 12}$ is compact on $L^2$. 
\end{lemma}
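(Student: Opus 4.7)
The plan is to prove the stronger statement that $a^{-1/2} K_2 a^{-1/2}$ is Hilbert-Schmidt on $L^2(\mathbb{T})$, which of course implies compactness. All the ingredients needed are already in place in the paper, so the task reduces to carefully combining the pointwise bounds on the kernel $K_2$ and the multiplier $a$.

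First, I would write out the weighted kernel
$$
\tilde K_2(p,p_2) := a(p)^{-1/2} K_2(p,p_2)\, a(p_2)^{-1/2}
$$
using the explicit formula from \eqref{chardonneret}, and estimate the ``external'' factors crudely: $\omega_1, \omega_3 \leq 1$ and, since $\gamma > 0$, $\mathfrak{f}_1,\mathfrak{f}_3 \lesssim_{\beta,\gamma} 1$ uniformly in $(p,p_2)$. Combining these with the bound $\sqrt{F_+(p,p_2)} \geq \sqrt{2}\,(\omega(p)\omega(p_2))^{1/4}$ established in the proof of Theorem \ref{theo: Linfty lwp} yields the clean estimate
$$
|K_2(p,p_2)| \lesssim_{\beta,\gamma} (\omega(p)\omega(p_2))^{3/4}.
$$

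Next, inserting the asymptotics $a(p) \sim_{\beta,\gamma} \omega(p)^{5/3}$ from Lemma \ref{asymptoticsmultiplier}, so that $a(p)^{-1/2} \sim \omega(p)^{-5/6}$, gives
$$
|\tilde K_2(p,p_2)| \lesssim_{\beta,\gamma} \omega(p)^{3/4-5/6}\, \omega(p_2)^{3/4-5/6} = \omega(p)^{-1/12}\, \omega(p_2)^{-1/12}.
$$
Since $\omega(p) \sim |p|$ near $p = 0 \mod 2\pi$, the function $\omega^{-1/6}$ is integrable on $\mathbb{T}$, and therefore
$$
\| \tilde K_2 \|_{L^2(\mathbb{T}\times\mathbb{T})}^2 \lesssim_{\beta,\gamma} \Bigl( \int_{\mathbb{T}} \omega(p)^{-1/6} \dd p \Bigr)^2 < \infty,
$$
so $\tilde K_2$ defines a Hilbert-Schmidt (hence compact) operator on $L^2(\mathbb{T})$.

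There is no serious obstacle; the proof is essentially a bookkeeping exercise once the two key estimates $\sqrt{F_+} \gtrsim (\omega_0 \omega_2)^{1/4}$ and $a \sim \omega^{5/3}$ are in hand. The only point worth flagging is that the exponents just barely balance: the two weights $a^{-1/2}$ contribute $\omega(p)^{-5/6}\omega(p_2)^{-5/6}$, while the gain from the kernel is $\omega(p)^{3/4}\omega(p_2)^{3/4}$, leaving a net exponent of $-1/12$ on each variable, which is comfortably inside $L^2$ but does rely on the specific interplay between the $5/3$-asymptotics of $a$ and the $1/4$-degeneracy of $\sqrt{F_+}$ at the edges.
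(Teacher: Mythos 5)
Your overall strategy --- prove that the weighted kernel is Hilbert--Schmidt by combining the asymptotics $a \sim \omega^{5/3}$ with a pointwise lower bound on the denominator $\sqrt{F_+}$ --- is exactly the paper's. However, the specific inequality you quote, $\sqrt{F_+(p,p_2)} \geq \sqrt{2}\,(\omega(p)\omega(p_2))^{1/4}$, is not what is established in the proof of Theorem \ref{theo: Linfty lwp}: that proof shows $\sqrt{F_+} \geq 2\sqrt{\omega_0\omega_2}$, where $F_+$ is the quantity of Lemma \ref{lemmaintegrationp2} (no inner square root), which is the convention relevant to the kernels in \eqref{chardonneret}. (The display in the introduction defining $F_+$ with an extra square root is an internal inconsistency of the paper; the delta-function computation in Lemma \ref{lemmaintegrationp2} fixes the correct normalization.) Your fourth-root bound is in fact false: take $p$ small and $p_2 = 2\pi - p$; then $F_+(p,p_2) = 4\sin^2(p/2)$, so $\sqrt{F_+} = 2\sin(p/2)$, which is much smaller than $(\omega(p)\omega(p_2))^{1/4} = \sin(p/2)^{1/2}$. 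Consequently your intermediate estimate $|K_2(p,p_2)| \lesssim (\omega(p)\omega(p_2))^{3/4}$ also fails there: by Lemma \ref{lemmaomega2omega3}, at that point $\omega_1\omega_3 = \cos^2(p/2) \sim 1$, so $K_2(p,p_2) \sim \sin(p/2)$, not $O(\sin(p/2)^{3/2})$.

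The error is harmless for the conclusion, and fixing it lands you precisely on the paper's proof: with the correct bound $\sqrt{F_+} \geq 2\sqrt{\omega\omega_2}$, together with $\omega_1\omega_3\,\mathfrak{f}_1\mathfrak{f}_3 \lesssim 1$ (this is where $\gamma>0$ enters), one gets $|K_2(p,p_2)| \lesssim (\omega(p)\omega(p_2))^{1/2}$, hence by Lemma \ref{asymptoticsmultiplier}, $|a(p)^{-1/2}K_2(p,p_2)a(p_2)^{-1/2}| \lesssim \omega(p)^{-1/3}\omega(p_2)^{-1/3}$. Since $\omega^{-2/3}$ is integrable on $\mathbb{T}$, this kernel is square integrable on $\mathbb{T}\times\mathbb{T}$, so the operator is Hilbert--Schmidt and compact. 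In short: same method as the paper, but you must replace the $1/4$-power lower bound by the $1/2$-power one and accept the weaker (still comfortably square-integrable) exponent $-1/3$ in place of your $-1/12$.
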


\begin{proof}
Using Lemma \ref{asymptoticsmultiplier} and the fact that $\sqrt{F_+} \geq \sqrt{\omega_0 \omega_2}$, we find that
$$
a(p)^{-\frac 12} K_2(p,p_2) a(p_2)^{-\frac 12} \lesssim \sin \left( \frac{p}{2} \right)^{-\frac 13}  \sin \left( \frac{p_2}{2} \right)^{- \frac 13}, 
$$
so that the corresponding operator is Hilbert-Schmidt, and therefore compact on $L^2$.
\end{proof}

\begin{lemma}[Compactness of the weighted $K_1$ operator]
For any $\beta$, $\gamma$, the operator $a^{-\frac 12} K_1 a^{- \frac 12}$ is compact on $L^2$.
\end{lemma}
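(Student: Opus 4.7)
The obstacle compared to the $K_2$ case is the factor $F_-(p,p_1)^{-1/2}$ in the kernel of $K_1$: the set $\{F_-=0\}$ is a one-dimensional subset of $\mathbb{T}^2$ (the critical locus of $p_2\mapsto h(p,p_2)$), and $F_-$ vanishes to first order on it, so $F_-^{-1/2}$ is locally in $L^1$ but not in $L^2$. Consequently, a quick computation shows that the weighted kernel $a^{-1/2}K_1 a^{-1/2}$ has square
\[
\omega_0^{1/3}\omega_1^{1/3}(\omega_2\omega_3 \mathfrak{f}_2\mathfrak{f}_3)^2 \, F_-(p,p_1)^{-1},
\]
which diverges logarithmically across $\{F_-=0\}$. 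Hence a direct Hilbert--Schmidt argument as in the previous lemma fails, and I would instead approximate $a^{-1/2}K_1 a^{-1/2}$ in operator norm by compact operators, exploiting norm-closedness of the compact operators.

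Fix a smooth cutoff $\chi_\epsilon$ which equals $1$ on $\{F_-\geq \epsilon\}$ and vanishes on $\{F_-\leq \epsilon/2\}$, and split
\[
a^{-1/2} K_1 a^{-1/2} \;=\; T_\epsilon^{\mathrm{reg}} \;+\; T_\epsilon^{\mathrm{sing}},
\]
according as $K_1$ is multiplied by $\chi_\epsilon$ or by $1-\chi_\epsilon$. On the support of $\chi_\epsilon$ we have $F_-\geq \epsilon/2$, so the kernel of $T_\epsilon^{\mathrm{reg}}$ is pointwise bounded by $C\epsilon^{-1/2}$ times a smooth factor; using Lemma \ref{asymptoticsmultiplier} (giving $a^{-1/2}\sim\omega^{-5/6}$) together with the bound $\omega_0\lesssim \omega_1\omega_2\omega_3$ from Lemma \ref{lemmaomega123} and the rapid decay of $\mathfrak{f}_2\mathfrak{f}_3$ away from the edges, the resulting kernel is in $L^2(\mathbb{T}^2)$. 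Thus $T_\epsilon^{\mathrm{reg}}$ is Hilbert--Schmidt, hence compact, for every $\epsilon>0$.

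The crucial step is to prove $\|T_\epsilon^{\mathrm{sing}}\|_{L^2\to L^2}\to 0$ as $\epsilon \to 0$, which I would do via Schur's test with weight $w(p)=a(p)^{1/2}$ (chosen so as to cancel the $a^{-1/2}$ factors). The matter then reduces to establishing, uniformly in $p$,
\[
\int_{\mathbb{T}} \bigl(1-\chi_\epsilon(p,p_1)\bigr) \, \frac{\omega\,\omega_1\,\omega_2\,\omega_3\,\mathfrak{f}_2\,\mathfrak{f}_3}{\sqrt{F_-(p,p_1)}}\,dp_1 \;\lesssim\; \sqrt{\epsilon},
\]
and the analogous bound with the roles of $p$ and $p_1$ exchanged (symmetry of $K_1$). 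Since $F_-$ vanishes to first order on a smooth one-dimensional curve, in a transverse coordinate $t$ one has $F_-\sim |t|$, and so $\int_{|t|<\epsilon}|t|^{-1/2}\,dt\lesssim \sqrt{\epsilon}$; combining this with pointwise control of the smooth prefactor yields the required estimate. Schur's test then gives $\|T_\epsilon^{\mathrm{sing}}\|_{\mathrm{op}}\lesssim \sqrt{\epsilon}$, and compactness follows.

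The main difficulty I anticipate is ensuring that the pointwise bound on the smooth prefactor in the Schur estimate is uniform up to the degenerate edges $p, p_1 \in \{0, \pi, 2\pi\}$, where $a$ vanishes and the $a^{-1/2}$ weights grow. This requires tracking how $\omega\,\omega_1\,\omega_2\,\omega_3\,\mathfrak{f}_2\,\mathfrak{f}_3$ behaves on the resonance manifold, and in particular how $\omega_2\omega_3$ compensate for $\omega_0^{-5/6}\omega_1^{-5/6}$ via Lemma \ref{lemmaomega123}; the remaining boundary contributions are controlled by the non-singular nature of the Rayleigh--Jeans equilibria $\mathfrak{f}_{\beta,\gamma}$ with $\gamma>0$ (equivalently, by $\mathfrak{f}_2\mathfrak{f}_3$ being uniformly bounded).
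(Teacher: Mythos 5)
Your overall strategy (approximating $a^{-1/2}K_1a^{-1/2}$ in operator norm by compact pieces, Hilbert--Schmidt away from $\{F_-=0\}$, Schur test for the singular remainder) is the same as the paper's, and your treatment of $T^{\mathrm{reg}}_\epsilon$ is fine. The gap is in the singular piece, and it is exactly the heart of the lemma. First, the Schur reduction is mis-stated: with weight $w=a^{1/2}$ the quantity to control is $\sup_p\int \mathbf{1}_{\{0<F_-<\epsilon\}}\,K_1(p,p_1)\,a(p_1)^{-1}\,\dd p_1$, not $\sup_p\int \mathbf{1}_{\{0<F_-<\epsilon\}}\,K_1(p,p_1)\,\dd p_1$; the weight cancels one factor $a^{-1/2}$ but doubles the other, so an extra factor $a(p_1)^{-1}\sim\omega_1^{-5/3}$ survives in the integrand. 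Second, and more seriously, the uniform transverse behavior $F_-\sim|t|$ that you invoke is false near the edges: Lemma \ref{lemmazerosF-} only gives $F_-(p,p_1)\gtrsim(p_1'-p_1)\sin\left(\frac p2\right)$, so the strip $\{0<F_-<\epsilon\}$ has width of order $\epsilon/\omega$, which blows up as $p\to 0$ or $2\pi$; moreover, as $p\to0$ the zeros $p_1',p_1''$ of $F_-(p,\cdot)$ collide with the edge $p_1=2\pi$ (at distance of order $\omega^{1/3}$ and $\lesssim\omega$ respectively), which is precisely where $a(p_1)^{-1/2}$ blows up. So the claimed uniform-in-$p$ bound $O(\sqrt\epsilon)$ is not established, and with the correct weight a single cutoff in $F_-$ does not by itself yield an operator norm tending to $0$ uniformly as $\epsilon\to0$; this is the difficulty you flag in your last paragraph but do not resolve.

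This is why the paper uses a two-parameter, three-piece decomposition: an edge piece supported on $\{\min(\omega,\omega_1)<\delta\}$, bounded in operator norm by $\delta^{1/6}$; a near-curve piece supported on $\{\min(\omega,\omega_1)>\delta,\ F_-<\epsilon\}$, bounded by $(\epsilon/\delta)^{1/6}$ (the width $\epsilon/\omega\lesssim\epsilon/\delta$ is only under control once the edge region has been removed); and a remainder with bounded kernel, hence Hilbert--Schmidt. Both operator-norm bounds follow from the Schur test with weight $\omega^{1/2}$ combined with the pointwise estimate $|a^{-1/2}K_1a^{-1/2}|(p,p_1)\lesssim(p_1'-p_1)^{-1/2}\omega^{-1/3}\omega_1^{1/6}$, which is the quantitative input missing from your sketch. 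To repair your argument you would either have to introduce this second (edge) cutoff and let $\delta=\delta(\epsilon)$, or prove a substitute for Lemma \ref{lemmazerosF-} with constants uniform in $p$, which is not available. Two minor points: $\mathfrak{f}_2\mathfrak{f}_3$ has no rapid decay (for $\gamma>0$ it is merely bounded above and below, which is all your Hilbert--Schmidt step needs), and the lemma is stated for all $\beta,\gamma$; since the relevant product $\omega_2\omega_3\mathfrak{f}_2\mathfrak{f}_3$ remains bounded even when $\gamma=0$, the appeal to $\gamma>0$ in your closing paragraph is not actually needed.
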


\begin{proof}
Let $\widetilde{K}_1 = a^{-\frac 12} K_1 a^{- \frac 12}$. We will decompose this operator into
\begin{align*}
& \widetilde{K}_1(p,p_1)  = \widetilde{K}_1^{(1)}(p,p_1) + \widetilde{K}_1^{(2)}(p,p_1) + \widetilde{K}_1^{(3)}(p,p_1),\\
& \widetilde{K}_1^{(1)}(p,p_1) = \mathbb{1}_{\min(\omega,\omega_1) < \delta} \widetilde{K}_1(p,p_1) \\
& \widetilde{K}_1^{(2)}(p,p_1) = \mathbb{1}_{\substack{\min(\omega,\omega_1) > \delta \\ F_-(p,p_1) < \epsilon}} \widetilde{K}_1(p,p_1),
\end{align*}
where $\epsilon, \delta>0$ may vary. We claim that
$$
\| \widetilde{K}_1^{(1)} \|_{L^2 \to L^2} \lesssim \delta^{\frac 16}, \quad \| \widetilde{K}_1^{(2)} \|_{L^2 \to L^2} \lesssim \left( \frac \epsilon \delta \right)^{\frac 16}  \quad \mbox{and} \quad \| \widetilde{K}_1^{(3)}(p,p_1) \|_{L^\infty_{p,p_1}} \lesssim_{\epsilon,\delta} 1.
$$
These bounds imply that $\widetilde{K}_1^{(3)}$ is compact for any $\epsilon,\delta$, and that $\widetilde{K}_1$ can be approximated by compact operators in the operator norm. By closedness of the class of compact operators, this implies that $\widetilde{K}_1$ is compact.

There remains to prove the bounds on the operator norms of $\widetilde{K}_1^{(1)}$ and $\widetilde{K}_1^{(2)}$. We will rely on the following variant of Schur's test: given a weight $w$ and a symmetric operator $S$ with kernel $S(x,y)$,
$$
\| S \|_{L^2 \to L^2} \lesssim \sup_x w(x) \int |S(x,y)| \frac{\dd y}{w(y)}.
$$
Before applying this lemma, we record the following estimate for $\widetilde{K}_1$, which follows from Lemma \ref{lemmazerosF-}:
$$
\mbox{if $p_1 \in (0, p_1'(p))$}, \qquad |\widetilde{K}_1|(p,p_1) \lesssim (p_1'-p_1)^{-\frac 12} \omega^{-\frac 13} \omega_1^{\frac 16}.
$$
There is a symmetric estimate for $p_1 \in (p_1'',2\pi)$, but in order to make notations lighter, we will restrict the value of $p_1$ to $p_1 \in (0, p_1'(p))$. Then the variant of Schur's test above with the weight $w = \omega^{\frac 12}$ gives
\begin{align*}
\| \widetilde{K}_1^{(1)} \|_{L^2 \to L^2}  &\lesssim \sup_p
\int_0^{p_1'} (p_1'-p_1)^{-\frac 12} \omega^{\frac 16} \omega_1^{-\frac 13} \mathbb{1}_{\min(\omega,\omega_1) < \delta} \,\dd p_1 \\
& \lesssim \sup_{\omega < \delta} \int_0^{p_1'} (p_1'-p_1)^{-\frac 12} \omega^{\frac 16} \omega_1^{-\frac 13} \,\dd p_1 + \sup_p \int_0^{\min(\delta,p_1')}  (p_1'-p_1)^{-\frac 12} \omega^{\frac 16} \omega_1^{-\frac 13} \,\dd p_1 \lesssim \delta^{\frac 16},
\end{align*}
where the last inequality is a consequence of H\"older's inequality. 

Turning to $\widetilde{K}_1^{(2)}$, there holds on the support of its kernel $|p_1 - p_1'| \lesssim \frac{\epsilon}{\omega} \lesssim \frac{\epsilon}{\delta}$. Therefore,
\begin{align*}
\| \widetilde{K}_1^{(2)} \|_{L^2 \to L^2}  &\lesssim \sup_p
\int_{\max(0,p_1'-C \frac \epsilon \delta)}^{p_1'} (p_1'-p_1)^{-\frac 12} \omega^{\frac 16} \omega_1^{-\frac 13}  \,\dd p_1 \lesssim \left( \frac \epsilon \delta \right)^{\frac 16},
\end{align*}
which concludes the proof.
\end{proof}

Combining these four lemmas, we obtain a crucial lower bound on the linearized operator.

\begin{proposition} \label{propositiondissipation} For any $\beta,\gamma >0$,
$$ \text{for all } g \in L^2 \cap  \operatorname{Ker}(L)^\perp, \qquad
 \langle -L g,g\rangle \gtrsim \int a(p) |g(p)|^2 \dd p. $$
\end{proposition}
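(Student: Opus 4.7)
The strategy is a compactness (Fredholm) argument followed by a careful use of the $L^2$ orthogonality. Setting $\widetilde{K} := a^{-1/2} K a^{-1/2}$ and $h := a^{1/2} g$, the inequality to prove is equivalent to $\langle (I - \widetilde{K}) h, h\rangle \gtrsim \|h\|_{L^2}^2$ on the subspace of $h$ coming from admissible $g$. From the two preceding lemmas, $\widetilde{K}$ is compact and self-adjoint on $L^2$. The positivity $\langle -Lg, g\rangle \geq 0$, together with density of the image of multiplication by $a^{1/2}$ in $L^2$, gives $\widetilde{K} \leq I$ as quadratic forms on all of $L^2$.

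I would argue by contradiction: if no such constant works, take $g_n \in L^2 \cap \operatorname{Ker}(L)^\perp$ with $\int a|g_n|^2 = 1$ and $\langle -L g_n, g_n\rangle \to 0$, and set $h_n := a^{1/2} g_n$, so $\|h_n\|_{L^2} = 1$. After extraction, $h_n \rightharpoonup h$ weakly in $L^2$. Compactness of $\widetilde{K}$ gives strong convergence $\widetilde{K} h_n \to \widetilde{K} h$, so $\langle \widetilde{K} h_n, h_n\rangle \to \langle \widetilde{K} h, h\rangle = 1$. Combined with $\langle \widetilde{K} h, h\rangle \leq \|h\|^2 \leq 1$, this forces $\|h\|_{L^2} = 1$ (hence $h_n \to h$ strongly in $L^2$) and $\widetilde{K} h = h$. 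Testing this last identity against smooth functions supported in $\{a>0\} = \mathbb{T}\setminus\{0\}$ shows that $\psi := a^{-1/2} h$ satisfies $L\psi = 0$ in the distributional sense, so by the characterization of $\operatorname{Ker}(L)$ already recalled, $\psi \in \operatorname{span}(\mathfrak{f}, \omega \mathfrak{f})$.

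The main obstacle is now to use $g_n \in \operatorname{Ker}(L)^\perp$ to force $\psi = 0$. The naive translation $\int g_n \mathfrak{f} = \int h_n \cdot a^{-1/2}\mathfrak{f} = 0$ does not pass to the weak limit of $h_n$, because $a^{-1/2}\mathfrak{f} \notin L^2$: near $p=0$, $a \sim \omega^{5/3}$ while $\mathfrak{f}(0) = 1/\gamma > 0$, so $a^{-1/2}\mathfrak{f} \sim \omega^{-5/6}$, whose square is not integrable. The resolution is to replace the natural basis of $\operatorname{Ker}(L)$ by one whose elements vanish at $p = 0$. The identity $1 = \beta\omega\mathfrak{f} + \gamma\mathfrak{f}$ shows that the constant $1$ also lies in $\operatorname{Ker}(L)$, hence so does $\mathfrak{f} - \mathfrak{f}(0)$, and together with $\omega\mathfrak{f}$ this provides such a basis: both elements vanish like $\omega$ at $p = 0$. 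For any $\phi$ in this basis, $a^{-1/2}\phi \sim \omega^{1/6}$ near zero, so $a^{-1/2}\phi \in L^2$, and the identity $\int g_n \phi = \int h_n \cdot a^{-1/2}\phi = 0$ passes to the limit by strong $L^2$ convergence, yielding $\int \psi \phi = 0$. Thus $\psi \in \operatorname{Ker}(L)$ is $L^2$-orthogonal to a basis of $\operatorname{Ker}(L)$; therefore $\psi = 0$, contradicting $\int a\psi^2 = \|h\|_{L^2}^2 = 1$. The whole argument hinges on this final observation that $1 \in \operatorname{Ker}(L)$, which allows one to shift the basis of the kernel to one suited to the degeneracy of $a$.
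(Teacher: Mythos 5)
Your compactness scheme is, up to the last step, essentially the paper's argument run in contradiction form: both rest on the compactness of $\widetilde K=a^{-1/2}Ka^{-1/2}$, the bound $\widetilde K\le I$ coming from positivity of the Dirichlet form, and the identification of the eigenvalue-$1$ eigenfunctions of $\widetilde K$ with (the image under $a^{1/2}$ of) $\operatorname{Ker}(L)$. To your credit, you also isolate the genuinely delicate point that the paper's proof passes over in one line: the $L^2$-orthogonality of $g_n$ to $\operatorname{Ker}(L)$ does not transfer to the limit of $h_n=a^{1/2}g_n$ in the $\mathfrak f$-direction, precisely because $a^{-1/2}\mathfrak f\sim\omega^{-5/6}\notin L^2$ near $p=0$. (A smaller point you should also flag: $\psi=a^{-1/2}h$ need not lie in $L^2$, so invoking the kernel characterization for it requires that the classification of collisional invariants hold in this larger weighted class.)

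The fatal problem is your proposed repair. The two functions you offer as a new basis of $\operatorname{Ker}(L)$ are proportional: since $\omega(0)=0$ gives $\mathfrak f(0)=1/\gamma$, one has
\begin{equation*}
\mathfrak f-\mathfrak f(0)=\frac{1}{\beta\omega+\gamma}-\frac1\gamma=-\frac{\beta}{\gamma}\,\omega\,\mathfrak f ,
\end{equation*}
so $\{\mathfrak f-\mathfrak f(0),\,\omega\mathfrak f\}$ spans only the line $\operatorname{span}(\omega\mathfrak f)$, not the two-dimensional kernel. The observation $1=\beta\omega\mathfrak f+\gamma\mathfrak f\in\operatorname{Ker}(L)$ is true but yields nothing: $1$ is already in the span, and it does not vanish at $p=0$. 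Consequently your limit argument only produces $\int\psi\,\omega\mathfrak f\,\dd p=0$, which does not force $\psi=0$ (take $\psi$ to be the component of $\mathfrak f$ orthogonal to $\omega\mathfrak f$), and no contradiction is reached. Nor can the idea be rescued by a cleverer basis: any $c_1\mathfrak f+c_2\omega\mathfrak f$ vanishing at $p=0$ has $c_1=0$, so there is no basis of $\operatorname{Ker}(L)$ whose elements all vanish at the degeneracy of $a$. What your argument does prove is coercivity under the weighted orthogonality conditions $\int a\,g\,\mathfrak f\,\dd p=\int a\,g\,\omega\mathfrak f\,\dd p=0$ (then the constraints do pass to the limit, since $a^{1/2}\mathfrak f,a^{1/2}\omega\mathfrak f\in L^2$). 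Handling the $\mathfrak f$-direction under the plain $L^2$ orthogonality of the statement requires an additional ingredient — this is exactly what is concealed in the paper's one-line assertion that $\|\widetilde K f\|_{L^2}\le(1-\delta)\|f\|_{L^2}$ on $\operatorname{Ker}(L)^\perp$ — and your proof as written does not supply it.
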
 

\begin{proof}
Let $\widetilde{K} = a^{-\frac 12} K a^{- \frac 12}$. Then
$$
0 \leq - \langle L g,g \rangle = \langle A g,g \rangle - \langle K g,g \rangle = \langle A g,g \rangle - \langle \widetilde{K} a^{\frac 12} g, a^{\frac 12} g \rangle.
$$
Since $\widetilde{K}$ is compact and self-adjoint, its spectrum is discrete away from $0$. It follows from the above formula that $\widetilde{K}$ cannot have eigenvalues $>1$ and that the eigenspace associated to the eigenvalue $1$ coincides with the kernel of $L$. Finally, if $f \in \operatorname{Ker}(L)^\perp$, then $\| \widetilde{K} f \|_{L^2} \leq (1-\delta) \| f \|_{L^2}$ for some $\delta>0$, and thus
$$
 - \langle L g,g \rangle = \langle A g,g \rangle - \langle \widetilde{K} a^{\frac 12} g, a^{\frac 12} g \rangle \geq \int a(p) |g(p)|^2 \dd p.
$$
\end{proof}

Turning to the evolution problem, we note first that it admits a unique solution for $L^2$ data.

\begin{lemma}[Existence and uniqueness] For $g_0 \in L^2$, there exists a unique solution $g(t,p) \in \mathcal{C}([0,\infty],L^2)$ to the Cauchy problem 
$$
\begin{cases}
\partial_t g - Lg = 0 \\
g(t=0) = g_0
\end{cases}
$$
which we will denote $g(t) = e^{tL} g_0$. Furthermore, its $L^2$ norm is bounded by that of the data:
$$
\mbox{for any $t \geq 0$}, \qquad \| g(t) \|_{L^2} \lesssim \| g_0 \|_{L^2}.
$$
\end{lemma}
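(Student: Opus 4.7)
The plan is to observe that $L$ is a bounded self-adjoint operator on $L^2$, so the Cauchy problem is trivial from the abstract semigroup theory point of view, and the $L^2$ bound comes from the non-positivity of the Dirichlet form stated at the start of this section.

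First I would verify boundedness of $L = -A + K$ on $L^2$. By Lemma \ref{asymptoticsmultiplier}, $a(p) \sim |\sin(p/2)|^{5/3}$ is bounded, so multiplication by $a$ defines a bounded operator $A$ on $L^2$. For the integral part, write $K_i = a^{1/2}\widetilde{K}_i a^{1/2}$ with $\widetilde{K}_i = a^{-1/2} K_i a^{-1/2}$; the two compactness lemmas above show that each $\widetilde{K}_i$ is compact, hence bounded, on $L^2$, and since $a^{1/2}$ is a bounded multiplier, each $K_i$ is bounded on $L^2$. Adding up, $L \in \mathcal{B}(L^2)$.

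Second, since $L$ is a bounded linear operator on the Hilbert space $L^2$, the series
\begin{equation*}
e^{tL} := \sum_{n \geq 0} \frac{t^n}{n!} L^n
\end{equation*}
converges in operator norm for every $t \in \mathbb{R}$, defines a norm-continuous one-parameter group on $L^2$, and $g(t) := e^{tL} g_0$ is the unique $\mathcal{C}([0,\infty),L^2)$ solution of the Cauchy problem (standard Picard/Banach fixed point on any finite interval, using $\|L\|_{L^2 \to L^2} < \infty$).

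Finally, for the uniform bound, I would use the non-negativity of the Dirichlet form recalled right after the definition of $L$: for every $g \in L^2$,
\begin{equation*}
\langle -Lg, g\rangle = \frac{1}{4} \int \delta(\Sigma)\delta(\Omega) \Bigl[\prod_{\ell=0}^3 \omega_\ell \mathfrak{f}_\ell\Bigr]\Bigl[\tfrac{g_3}{\mathfrak{f}_3} + \tfrac{g_2}{\mathfrak{f}_2} - \tfrac{g}{\mathfrak{f}} - \tfrac{g_1}{\mathfrak{f}_1}\Bigr]^2 \dd p_{1,2,3} \geq 0.
\end{equation*}
Combined with symmetry of $L$, this yields
\begin{equation*}
\tfrac{d}{dt}\|g(t)\|_{L^2}^2 = 2\langle Lg(t),g(t)\rangle \leq 0,
\end{equation*}
so $\|g(t)\|_{L^2} \leq \|g_0\|_{L^2}$ for all $t \geq 0$, which is the claimed bound (with constant $1$). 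There is no real obstacle here, since compared with what has already been proved in the section, everything reduces to the bounded-self-adjoint-operator setup; the only minor point requiring a line of justification is that the boundedness of $K_i$ on $L^2$ really does follow from boundedness of $\widetilde{K}_i$, which is immediate because $a^{1/2}\in L^\infty$.
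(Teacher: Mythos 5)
Your proposal is correct and follows essentially the same route as the paper: boundedness of $L$ on $L^2$ deduced from the boundedness of $a^{-1/2}K_i a^{-1/2}$ together with the boundedness of the multiplier $a$, existence and uniqueness from the standard bounded-generator/fixed-point argument, and the $L^2$ bound from the non-negativity of the Dirichlet form $\langle -Lg,g\rangle \geq 0$.
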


\begin{proof}
We saw that $a^{-\frac 12} K_1 a^{-\frac 12}$ and $a^{-\frac 12} K_2 a^{-\frac 12}$ are bounded on $L^2$, hence this is also the case for $K_1$ and $K_2$. Therefore, $L$ is bounded and the lemma follows by a fixed point theorem.
\end{proof}

As a consequence of the lower bound proved in the previous proposition, we get the following corollary.

\begin{corr}[Dissipation inequality]
\label{corodissi}
For $g_0 \in L^2$,
$$
\int_0^\infty \int a(p) |e^{tL} g_0(p)|^2 \dd p \dd t \lesssim \| g_0 \|_{L^2}^2.
$$
\end{corr}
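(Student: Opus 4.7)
The plan is to run the standard energy/dissipation calculation for a symmetric dissipative semigroup, and to use Proposition \ref{propositiondissipation} as the coercivity input. I will implicitly assume $g_0 \in \operatorname{Ker}(L)^\perp$: otherwise the claim would fail, since any nonzero $g_0 \in \operatorname{Ker}(L)$ gives $e^{tL} g_0 \equiv g_0$ and $\int a(p) |g_0(p)|^2 \dd p$ is then a strictly positive constant, whose integral over $[0,\infty)$ diverges.

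Setting $g(t) = e^{tL} g_0$, the first step is to observe that $g(t) \in \operatorname{Ker}(L)^\perp$ for every $t \geq 0$. This follows from self-adjointness of $L$ on $L^2$ (established in the existence lemma just above the corollary), since the spectral calculus of bounded self-adjoint operators makes $e^{tL}$ commute with the orthogonal projection onto $\operatorname{Ker}(L)^\perp$. Pairing the evolution equation $\partial_t g = Lg$ with $g$ in $L^2$ and using symmetry of $L$ then yields the energy identity
$$
\frac{1}{2} \frac{d}{dt} \| g(t) \|_{L^2}^2 = \langle L g(t), g(t) \rangle.
$$

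The second step is to apply Proposition \ref{propositiondissipation} to $g(t)$, which furnishes a constant $c > 0$ such that $-\langle L g(t), g(t) \rangle \geq c \int a(p) |g(t,p)|^2 \dd p$. Integrating the energy identity from $0$ to $T$ and using the sign of the dissipation then gives
$$
c \int_0^T \int a(p) |g(t,p)|^2 \dd p \dd t \leq \tfrac{1}{2} \bigl( \| g_0 \|_{L^2}^2 - \| g(T) \|_{L^2}^2 \bigr) \leq \tfrac{1}{2} \| g_0 \|_{L^2}^2,
$$
and sending $T \to \infty$ via monotone convergence yields the corollary. There is essentially no obstacle here: all the substantive work is packaged in Proposition \ref{propositiondissipation}, and what remains is the textbook dissipation calculation for a symmetric, negative-semi-definite operator.
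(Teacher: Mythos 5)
Your proof is correct and follows essentially the same route the paper intends: the corollary is stated there as an immediate consequence of Proposition \ref{propositiondissipation} via the standard energy identity $\frac12\frac{d}{dt}\|g\|_{L^2}^2=\langle Lg,g\rangle$, invariance of $\operatorname{Ker}(L)^\perp$ under the (bounded, symmetric) semigroup, and integration in time. Your observation that the statement implicitly requires the projection of $g_0$ onto $\operatorname{Ker}(L)$ to vanish is accurate and consistent with how the estimate is used later (e.g.\ in Theorem \ref{theoremdecay}).
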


This corollary quantifies time decay for the solution, but it will be insufficient for our purposes. Indeed, the equation is ill-posed in weighted $L^2$ spaces, so that this topology cannot be used to control nonlinear terms.

In the next section, we aim at obtaining pointwise decay, which will correct this shortcoming.

\section{Pointwise decay for the linearized operator}

\label{sectionpointwise}

In all that follows we assume that the chemical potential $\gamma>0$, that is we study the linearised operator around non-singular RJ equilibria. 

In this section we investigate how the energy dissipation leads to a polynomially fast relaxation for the linearized semigroup, pointwise and away from the edges. To this purpose we explore how the edges of the domain, where the weight in the Poincaré Inequality of the previous section vanishes, interact with the bulk of the domain, where the weight is lower-bounded. 

\subsection{Energy decay in the bulk}

We define
$$
\langle t \rangle = 10 + |t|
$$
and the following subintervals of $[0,2\pi]$, corresponding to the edges and the bulk of the domain respectively
\begin{equation}
\begin{split}
 & \mathcal{E}_{t,\alpha} = \{ p \in [0,2\pi]: p< \langle t \rangle^{-\alpha},\ p> 2\pi-\langle t \rangle^{-\alpha}  \} \text{ and}\\ 
 &  \mathcal{B}_{t,\alpha} = \{ p \in [0,2\pi]: \langle t \rangle^{-\alpha} \leq p \leq 2\pi- \langle t \rangle^{-\alpha}\}.
\end{split}
\end{equation}

We now define the following functionals: 
\begin{equation}
    \begin{split}
    m(t) = 
    \int_{\mathcal{B}_{t,\alpha}} |g(t,p)|^2 d p, \quad
    n(t)  = \int_{\mathcal{E}_{t,\alpha}  }|g(t,p)|^2 d p, \quad
    q(t) = \sup_{p\ \in\ \mathcal{E}_{t,\alpha} } |g(t,p)|.
 \end{split}
\end{equation}

\begin{lemma} \label{lemm:ODEf for m}
Assume that $\alpha < \frac 35$, and that 
$$
m(0)+n(0) \leq 1 \qquad \mbox{and} \qquad q(t) \leq \langle t \rangle^{e}
$$
where $e \in \mathbb{R}$. Then
$$
m(t) \lesssim_\alpha \langle t \rangle^{2e - \alpha}.
$$
\end{lemma}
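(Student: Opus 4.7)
The plan is to reduce the claim to a single linear differential inequality for $M(t) := m(t) + n(t) = \|g(t)\|_{L^2(\mathbb{T})}^2$ and solve it by an integrating factor. The two ingredients are the dissipation of Proposition \ref{propositiondissipation} and the pointwise hypothesis $q(t) \leq \langle t\rangle^e$.

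First I would differentiate $M(t)$ directly; no boundary contribution arises here (unlike for $m$ or $n$ in isolation) since $M$ is the full $L^2$ norm. Using $\partial_t g = Lg$, self-adjointness, and Proposition \ref{propositiondissipation} (applied on $\operatorname{Ker}(L)^\perp$, the natural standing assumption of Section \ref{sectionpointwise}), this yields $M'(t) \lesssim -\int_{\mathbb{T}} a(p)|g(t,p)|^2 \dd p$. The asymptotics $a(p) \sim \sin(p/2)^{5/3}$ from Lemma \ref{asymptoticsmultiplier} give $a(p) \gtrsim \langle t\rangle^{-5\alpha/3}$ uniformly on $\mathcal{B}_{t,\alpha}$, hence $\int_{\mathbb{T}} a(p)|g|^2 \dd p \gtrsim \langle t\rangle^{-5\alpha/3} m(t)$. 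Separately, the hypothesis $q(t) \leq \langle t\rangle^e$ and the trivial $|\mathcal{E}_{t,\alpha}| \lesssim \langle t\rangle^{-\alpha}$ give $n(t) \leq q(t)^2 |\mathcal{E}_{t,\alpha}| \lesssim \langle t\rangle^{2e - \alpha}$. Writing $m(t) = M(t) - n(t)$ and substituting produces the closed inequality
\[
M'(t) + c\langle t\rangle^{-5\alpha/3} M(t) \lesssim \langle t\rangle^{2e - 8\alpha/3}.
\]

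Finally I would solve this by the integrating factor $e^{\phi(t)}$ with $\phi(t) = c\int_0^t \langle\tau\rangle^{-5\alpha/3}\dd\tau$. The hypothesis $\alpha < 3/5$ is used precisely here: it ensures $\phi(t) \sim \langle t\rangle^{1-5\alpha/3}$ blows up, so the homogeneous piece $e^{-\phi(T)} M(0)$ decays super-polynomially and is negligible. The forcing integral is handled by Laplace's method: since $\phi'$ varies slowly, $\int_0^T \langle t\rangle^{2e - 8\alpha/3} e^{\phi(t)} \dd t \lesssim \phi'(T)^{-1} \langle T\rangle^{2e - 8\alpha/3} e^{\phi(T)} \sim \langle T\rangle^{2e - \alpha} e^{\phi(T)}$, the key arithmetic being that $\phi'(T)^{-1} \sim \langle T\rangle^{5\alpha/3}$ converts $\langle T\rangle^{2e - 8\alpha/3}$ into exactly $\langle T\rangle^{2e - \alpha}$. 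Dividing through by $e^{\phi(T)}$ yields $M(T) \lesssim \langle T\rangle^{2e - \alpha}$, and a fortiori the advertised bound on $m$.

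The main conceptual point, and the only delicate step, is this last Laplace-method estimate. The polynomial decay rate of $m$ is not controlled by the pure dissipation rate $\langle t\rangle^{-5\alpha/3}$ (which by itself would only give stretched-exponential decay of the homogeneous solution, not of the inhomogeneous one); rather, it is forced to track the polynomial input $n(t) \lesssim \langle t\rangle^{2e-\alpha}$ coming from the pointwise hypothesis on the edges. Everything else reduces to writing Grönwall carefully, invoking Lemma \ref{asymptoticsmultiplier}, and bookkeeping the exponents.
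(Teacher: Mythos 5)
Your proof is correct and follows essentially the same route as the paper: the same dissipation bound $-\frac{d}{dt}(m+n) \gtrsim \langle t\rangle^{-5\alpha/3} m$ from Proposition \ref{propositiondissipation} and Lemma \ref{asymptoticsmultiplier}, the same edge estimate $n(t) \lesssim \langle t\rangle^{2e-\alpha}$, an integrating factor exploiting $\alpha<\tfrac35$, and the same asymptotic evaluation of the resulting integral (your Laplace-method step is exactly the paper's identity \eqref{identity: integral order}), with identical exponent bookkeeping $2e-\tfrac{8\alpha}{3}+\tfrac{5\alpha}{3}=2e-\alpha$. The only cosmetic difference is that you substitute $m = M - n$ to obtain a forced linear inequality for $M$ directly, whereas the paper keeps $-n'$ as the forcing term and integrates by parts in time to trade $n'$ for $n$; both variants are equivalent.
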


\begin{proof}
By Lemma \ref{asymptoticsmultiplier} and Proposition \ref{propositiondissipation},
\begin{equation} \label{theODE}
\begin{split}
-\frac{d}{dt} (m(t) + n(t)) & = - 2\int g(t,p) L g(t,p) \dd p  \gtrsim \int a(p) |g(t,p)|^2 \dd p \\ 
&\gtrsim \int_{\mathcal{B}_{t,\alpha}} \omega(p)^{\frac 53} |g(t,p)|^2  \dd p \gtrsim \langle t \rangle^{- \frac{5\alpha}{3}} m(t).
    \end{split}
\end{equation}
We also observe that 
 $$ n(t) \lesssim q(t)^2 \langle t \rangle^{-\alpha} \lesssim \langle t \rangle^{2e-\alpha}.$$

Integrating the differential inequality \eqref{theODE} gives 
$$
m(t) \leq m(0) e^{-CI(t)} - e^{-CI(t)} \int_0^t n'(s) e^{CI(s)} \dd s, \qquad \mbox{where} \qquad I(t) = \int_0^t \langle s \rangle^{-\frac {5\alpha} 3} \dd s \sim \langle t \rangle^{1 - \frac {5\alpha}{3}},
$$
which becomes after integrating by parts in $s$
$$
m(t) + n(t) \leq (m(0) + n(0)) e^{-CI(t)} + C e^{-CI(t)} \int_0^t n(s) I'(s) e^{CI(s)} \dd s.
$$

The first term in the right-hand side is decaying exponentially fast, so we turn immediately to the second term. Using the bound on $n$ above,
$$
e^{-CI(t)} \int_0^t n(s) I'(s) e^{CI(s)} \dd s
\lesssim e^{- C \langle t \rangle^{1 - \frac{5\alpha} 3}} \int_0^t \langle s \rangle^{2e - \frac{8 \alpha}{3}} e^{C \langle s \rangle^{1 - \frac{5\alpha} 3}} \dd s.
$$

We now resort to the identity
\begin{equation} \label{identity: integral order}
\int_1^T e^{t^a} t^b \dd t \sim \int_1^{T^a} e^s s^{\frac b a + \frac 1 a -1} \dd s \sim e^{T^a} T^{b-a+1}, \qquad \mbox{if $T>2$, $a>0$, $b \in \mathbb{R}$}
\end{equation}
(which is itself a consequence of $\int_1^t e^s s^a \dd s \sim e^t t^a$, valid for any $a \in \mathbb{R}$) to see that
$$
e^{-I(t)} \int_0^t n(s) I'(s) e^{I(s)} \dd s 
\lesssim \langle t \rangle^{2e - \alpha}.
$$
\end{proof}

\subsection{A weak pointwise bound}
We prove a very weak bound, which will be a stepping stone to start the proof of pointwise decay.

\begin{lemma} \label{lem:weakLinfty}
Let $g$ solve $\partial_t g - L g =0$ with $g(t=0) =g_0 \in L^\infty$. Then
$$
\| g(t) \|_{L^\infty(I)} \lesssim \langle t \rangle^{1+} \| g_0 \|_{L^\infty(I)}. 
$$
 \end{lemma}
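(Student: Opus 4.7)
The plan is to combine a Duhamel argument with the $L^2$-contraction of the semigroup and a mixed-norm bound on $K$. Splitting $L = -A + K$ and using $a \geq 0$, Duhamel yields
$$|g(t,p)| \leq |g_0(p)| + \int_0^t |(Kg)(s,p)|\, ds,$$
so the task reduces to bounding $\|Kg(s)\|_{L^\infty}$ in terms of norms of $g(s)$, which are in turn controlled via the $L^2$-contraction $\|g(s)\|_{L^2} \leq \|g_0\|_{L^2} \lesssim \|g_0\|_{L^\infty}$ (the last inequality because $\mathbb{T}$ has finite measure).

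The two pieces of $K = -K_1 + 2K_2$ behave differently. For $K_2$, the inequality $\sqrt{F_+(p,p_2)} \geq 2\sqrt{\omega_0\omega_2}$ (already used in Section \ref{sectionLWP}) gives $\|K_2(p,\cdot)\|_{L^2_{p_2}} \lesssim 1$ uniformly in $p$, so that Cauchy--Schwarz yields $\|K_2 g(s)\|_{L^\infty} \lesssim \|g_0\|_{L^\infty}$. For $K_1$, combining the estimate on $\widetilde K_1$ from Lemma \ref{lemmazerosF-} with the asymptotics of $a$ from Lemma \ref{asymptoticsmultiplier} gives $|K_1(p,p_1)| \lesssim (p_1'-p_1)^{-1/2} \omega(p)^{1/2} \omega(p_1)$, which puts $K_1(p,\cdot)$ in $L^q_{p_1}$ uniformly for every $q < 2$ but just fails $L^2$. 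Hölder therefore gives $\|K_1 g(s)\|_{L^\infty} \lesssim \|g(s)\|_{L^{q'}}$ for any $q' > 2$, and the standard $L^2$--$L^\infty$ interpolation yields
$$\|K_1 g(s)\|_{L^\infty} \lesssim \|g_0\|_{L^\infty}^{2/q'} \|g(s)\|_{L^\infty}^{1-2/q'}.$$

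Setting $M(t) := \|g(t)\|_{L^\infty}$ and $E := \|g_0\|_{L^\infty}$, the Duhamel inequality reduces to the sublinear Gronwall-type bound
$$M(t) \leq C E (1+t) + C E^{2/q'} \int_0^t M(s)^{1-2/q'}\, ds.$$
Plugging in the ansatz $M(t) = C_\delta E \langle t \rangle^{q'/2}$ shows it is self-consistent, since the exponent identity $\frac{q'}{2}\bigl(1-\frac{2}{q'}\bigr) + 1 = \frac{q'}{2}$ holds exactly. Choosing $q' = 2 + \delta$ with $\delta > 0$ arbitrarily small then delivers the desired $\langle t \rangle^{1+}$ bound.

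The main obstacle is the borderline failure of $K_1(p,\cdot) \in L^2$ (the $(p_1'-p_1)^{-1/2}$ singularity is precisely critical): it prevents a direct $L^2 \to L^\infty$ estimate on $K$ and is exactly what forces a polynomial loss in time rather than a uniform bound. This weak bound is a natural seed estimate to be bootstrapped into sharper pointwise decay in the remainder of Section \ref{sectionpointwise}.
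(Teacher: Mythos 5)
Your proof is correct, and it follows the paper's skeleton (split $L=-A+K$, drop the nonnegative multiplier $a$, bound $K_2$ through its bounded kernel and the monotone $L^2$ norm), but you treat the critical piece $K_1$ by a genuinely different device. The paper keeps the inequality linear: it isolates the $(p_1'-p_1)^{-1/2}$ singularity with a time-dependent cutoff at distance $\langle t\rangle^{-N}$ from $p_1'$, uses Cauchy--Schwarz on the far region (a $\log\langle t\rangle\,\|g_0\|_{L^2}$ contribution) and the $L^\infty$ norm with a prefactor $\langle t\rangle^{-N/2}$ on the near region, and integrates the resulting linear ODE to get essentially $t\log t$. You instead exploit that the kernel lies in $L^q_{p_1}$ uniformly for every $q<2$, apply H\"older and the $L^2$--$L^\infty$ interpolation $\|g\|_{L^{q'}}\le\|g\|_{L^2}^{2/q'}\|g\|_{L^\infty}^{1-2/q'}$, and close a sublinear integral inequality with exponent $q'/2=1+$. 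Each route has its merits: the paper's gives the marginally sharper $\langle t\rangle\log\langle t\rangle$ and stays within linear Gronwall, while yours avoids the ad hoc time-dependent splitting and makes the borderline failure of $K_1(p,\cdot)\in L^2$ the visible source of the loss. Two small points to tighten: (i) the bound $|K_1(p,p_1)|\lesssim(p_1'-p_1)^{-1/2}\omega^{1/2}\omega_1$ is derived in the proof of the compactness lemma for $a^{-1/2}K_1a^{-1/2}$ (using Lemma \ref{lemmazerosF-} and Lemma \ref{lemmaomega123}), not stated in Lemma \ref{lemmazerosF-} itself, and you should note the uniformity in $p$ (the $\omega^{-1/2}$ from the lower bound on $F_-$ is absorbed by $\omega_1\omega_2\omega_3\lesssim\omega$) as well as the symmetric singularity at $p_1''$; (ii) "plugging in the ansatz" should be upgraded to an actual comparison (Bihari-type) argument, which works here because the exponent $1-2/q'<1$ makes the inequality sublinear, and it presupposes that $t\mapsto\|g(t)\|_{L^\infty}$ is finite and continuous -- this follows since your kernel bounds show $L$ is bounded on $L^\infty$, an issue the paper is equally silent about.
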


 \begin{proof}
By the decomposition of $L$ in Section \ref{sectionfirstproperties}, $f$ solves the equation
$$
\partial_t g = -ag - K_1 g + 2K_2 g. 
$$
We now want to bound the right-hand side in order to obtain an ODE satisfied by $\| f(t) \|_{L^\infty}$. Since $a \geq 0$, 
$$
\frac{d}{dt} \| g \|_{L^\infty} \lesssim \| K_1 g \|_{L^\infty} + \| K_2 g \|_{L^\infty}.
$$
The kernel of $K_2$ is $\displaystyle \frac{\omega \omega_1 \omega_2 \omega_3 \mathfrak{f}_1 \mathfrak{f}_3}{F_+(p,p_2)} \lesssim \sqrt{\omega \omega_2} \omega_1 \omega_3$; hence it is uniformly bounded and
$$
\| K_2 g(t) \|_{L^\infty} \lesssim \| g(t) \|_{L^2} \lesssim \|g_0\|_{L^\infty},
$$
since the $L^2$ norm of $f$ is decreasing.

Turning to $K_1$, its kernel is $\displaystyle \frac{\omega \omega_1 \omega_2 \omega_3 \mathfrak{f}_2 \mathfrak{f}_3}{\sqrt{F_-(p,p_1)}} \mathbf{1}_{F_-(p,p_1)>0}$.
We learn from Lemma \ref{lemmazerosF-} that $F_-$ has two nontrivial zeros, $p_1'$ and $p_1''$ and that $F_-(p,\cdot)$ is only positive on  $[0,p_1'] \cup [p_1'',2\pi]$. We only consider the first interval for simplicity and we apply Lemma \ref{lemmaomega123} followed by lower bound on $F_-$ in Lemma \ref{lemmazerosF-} to get that
\begin{align*}
\left| \int_0^{p_1'} \frac{\omega \omega_1 \omega_2 \omega_3 \mathbf{1}_{F_->0}}{\sqrt{F_-}} g(t,p_1) \dd p_1 \right| & \lesssim \omega^2 \left| \int \frac{ \mathbf{1}_{F_->0}}{\sqrt{F_-}} g(t,p_1) \dd p_1 \right| \\
& \lesssim \omega^{\frac 32} \left| \int_0^{p_1'} \frac{1} {\sqrt{p_1'-p_1}} g(t,p_1) \dd p_1 \right|.
\end{align*}

We then split the integral between $|p-p_1'| < \langle t \rangle^{-N}$ and $|p-p_1'| < \langle t \rangle^{-N}$ for $N$ large enough and bound the above by
\begin{align*}
 \int_{|p_1-p_1'| > \langle t \rangle^{-N}} \left| \frac{g(t,p_1)}{\sqrt{p_1'-p_1}} \right| \dd p_1 + 
 \int_{|p_1-p_1'| < \langle t \rangle^{-N} } 
\left| \frac{g(t,p_1)}{\sqrt{p_1'-p_1}} \right| \dd p_1.
\end{align*}
Bounding the first integral by the Cauchy-Schwarz inequality and the second by the $L^\infty$ norm of $f$, this is less than
$$
 \log \langle t \rangle \| g (t)\|_{L^2} + \langle t \rangle^{-\frac N2} \| g(t) \|_{L^\infty} \lesssim  \log \langle t \rangle \| g_0\|_{L^2} + \langle t \rangle^{-\frac N2} \| g(t) \|_{L^\infty}
$$
since the $L^2$ norm of $f$ is decreasing.

Overall, we find the ODE
$$
\frac{d}{dt} \| g(t) \|_{L^\infty} \lesssim  \log \langle t \rangle \| g_0\|_{L^2} + \langle t \rangle^{- \frac N2} \| g(t) \|_{L^\infty},
$$
which gives the desired result upon integration.
\end{proof}

\subsection{Pointwise bounds}

 \begin{lemma} \label{lem: Step 1 of the iteration}Let $\alpha < \frac{3}{5}$, and assume that
$$
\| g_0 \|_{L^\infty} \leq 1 \quad \mbox{and} \quad \| g(t) \|_{L^2} \leq \langle t \rangle^b.
$$
\begin{itemize}
\item[(i)] If $b>-1$, for any $p \in [0,2\pi]$,
$$|g(t,p)| \lesssim |g_0(p)| + \omega^{\frac{3}{2}} \langle t \rangle^{b+1+}  . $$
\item[(ii)] In the bulk: if $\omega > \langle t \rangle^{-\alpha}$,
$$|g(t,p)| \lesssim \omega^{-\frac 16} \langle t \rangle^{b+}$$
\end{itemize}
 \end{lemma}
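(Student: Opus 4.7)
The plan is to apply Duhamel's formula with respect to the multiplication part $-A$ of the decomposition $L = -A + (-K_1 + 2K_2)$, writing
\begin{equation*}
g(t,p) = e^{-a(p) t} g_0(p) + \int_0^t e^{-a(p)(t-s)} (-K_1 + 2K_2) g(s,p) \, ds.
\end{equation*}
The key preliminary step is a pointwise kernel estimate
\begin{equation*}
|K_j g(s,p)| \lesssim \omega^{3/2} \langle s \rangle^{b+}, \qquad j = 1,2,
\end{equation*}
from which both (i) and (ii) will follow rather directly.

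These kernel bounds are obtained by adapting the argument in the proof of Lemma \ref{lem:weakLinfty}, while keeping track of the $\omega^{3/2}$ prefactor. For $K_1$, the cubic-product bound from Lemma \ref{lemmaomega123} together with the control of $1/\sqrt{F_-}$ at its zeros $p_1'$ and $p_1''$ from Lemma \ref{lemmazerosF-}, combined with a splitting of the $p_1$ integration at scale $|p_1 - p_1'| \sim \langle s \rangle^{-N}$, gives by Cauchy--Schwarz on the far part and the trivial bound on the near part
\begin{equation*}
|K_1 g(s,p)| \lesssim \omega^{3/2} \left( \log\langle s \rangle \, \|g(s)\|_{L^2} + \langle s \rangle^{-N/2} \|g(s)\|_{L^\infty} \right).
\end{equation*}
For $K_2$, the same product structure combined with $\sqrt{F_+} \gtrsim \sqrt{\omega \omega_2}$ yields $K_2(p,p_2) \lesssim \omega^{3/2}/\sqrt{\omega_2}$. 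Since $\omega_2^{-1}$ is only logarithmically non-integrable near $p_2 = 0, 2\pi$, an analogous truncation at $\omega_2 \sim \langle s \rangle^{-N}$ produces
\begin{equation*}
|K_2 g(s,p)| \lesssim \omega^{3/2} \left( \sqrt{\log\langle s \rangle} \, \|g(s)\|_{L^2} + \langle s \rangle^{-N/2} \|g(s)\|_{L^\infty} \right).
\end{equation*}
Inserting $\|g(s)\|_{L^2} \leq \langle s \rangle^b$ together with the crude $\|g(s)\|_{L^\infty} \lesssim \langle s \rangle^{1+}$ from Lemma \ref{lem:weakLinfty}, and taking $N$ large, absorbs the $L^\infty$ contribution and yields the claimed estimate.

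For (i), bound both propagator factors by $1$ and integrate in $s$:
\begin{equation*}
|g(t,p)| \leq |g_0(p)| + \int_0^t \left( |K_1 g|(s,p) + 2 |K_2 g|(s,p) \right) ds \lesssim |g_0(p)| + \omega^{3/2} \int_0^t \langle s \rangle^{b+} \, ds \lesssim |g_0(p)| + \omega^{3/2} \langle t \rangle^{b+1+},
\end{equation*}
where the last step uses $b > -1$. For (ii), exploit $a(p) \gtrsim \omega^{5/3}$ in the bulk. The initial-data contribution $e^{-a(p)t}|g_0(p)|$ is super-polynomially small since $a(p) t \gtrsim \langle t \rangle^{1 - 5\alpha/3}$ and $\alpha < 3/5$. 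For the Duhamel integral, split $[0,t] = [0, t/2] \cup [t/2, t]$: on $[0, t/2]$ the factor $e^{-a(p)(t-s)} \leq e^{-a(p) t/2}$ is super-polynomially small uniformly in $s$; on $[t/2, t]$ we have $\langle s \rangle^{b+} \lesssim \langle t \rangle^{b+}$, so
\begin{equation*}
\int_{t/2}^t e^{-a(p)(t-s)} \omega^{3/2} \langle s \rangle^{b+} \, ds \lesssim \omega^{3/2} \langle t \rangle^{b+} \int_0^\infty e^{-a(p) u} \, du \lesssim \omega^{3/2} a(p)^{-1} \langle t \rangle^{b+} \lesssim \omega^{-1/6} \langle t \rangle^{b+}.
\end{equation*}

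The main obstacle is producing the $\omega^{3/2}$ prefactor in both kernel estimates with an acceptable time growth: it is this $\omega^{3/2}$ that, when balanced against the $\omega^{5/3}$ dissipation rate, produces the $\omega^{-1/6}$ factor appearing in (ii). For $K_1$ this requires extracting the full $\omega^2$ from $\omega \omega_1 \omega_2 \omega_3$ via Lemma \ref{lemmaomega123} and a careful near/far decomposition around the singularity of $1/\sqrt{F_-}$, while for $K_2$ it follows from the same cubic-product structure together with $\sqrt{F_+} \gtrsim \sqrt{\omega \omega_2}$ and a logarithmic truncation exploiting the integrability of $\omega_2^{-1/2}$.
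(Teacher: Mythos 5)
Your proof is correct and follows essentially the same route as the paper: the same $K_1$/$K_2$ splitting with Lemma \ref{lemmaomega123}, the $F_-$ lower bound of Lemma \ref{lemmazerosF-} with a near/far truncation plus the weak bound of Lemma \ref{lem:weakLinfty}, dropping the dissipation for (i), and keeping it (via $a \sim \omega^{5/3}$) to produce the $\omega^{-1/6}$ in (ii). The only cosmetic differences are that you phrase the argument through Duhamel's formula for the multiplication part $-A$ rather than a differential inequality, and you handle the $F_+$ term with a logarithmic truncation instead of a direct Cauchy--Schwarz on the kernel; both are equivalent to the paper's steps.
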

\begin{proof}
$(i)$ We first split as above into the two integral kernels
\begin{align*}
\frac{d}{dt} |g(p)| & \lesssim  \left| \int \frac{\omega \omega_1 \omega_2 \omega_3}{\sqrt{F_+}} g(p_2) \dd p_2 \right|  + \left| \int \frac{\omega \omega_1 \omega_2 \omega_3 1_{F_- >0}}{\sqrt{F_-}} g(p_1) \dd p_1 \right| = I_+ + I_-.
\end{align*}
For $I_+$, we use successively the inequality $\sqrt{F_+} \gtrsim \sqrt{\omega \omega_2}$ and the Cauchy-Schwarz inequality to obtain
\begin{align*}
I_+& \lesssim \left| \int \sqrt{\omega \omega_2} \omega_1 \omega_3 g(p_1) \dd p_1 \right|   
 \lesssim \omega^{\frac 12} \left[ \int \omega_2 \omega_1^2 \omega_3^2 \,\dd p_1 \right]^{\frac 12} \| g \|_{L^2} \lesssim \omega^{\frac 32} \langle t \rangle^b.
\end{align*}

For the singular term $I_-$, we first use Lemma \ref{lemmaomega123} to obtain that
$$
I_- \leq \left| \int \frac{\omega \omega_1 \omega_2 \omega_3 \mathbf{1}_{F_->0}}{\sqrt{F_-}} g(p_1) \dd p_1 \right| \lesssim \omega \left| \int \frac{\omega \mathbf{1}_{F_->0}}{\sqrt{F_-}} g(p_1) \dd p_1 \right|.
$$

Next, we resort to Lemma \ref{lemmazerosF-}, from which we learn that $F_-(p,\cdot)$ is positive in $[0,p_2'] \cup [p_2'',2\pi]$; for simplicity, we will focus on the left interval. Using the lower bound on $F_-$ in Lemma \ref{lemmazerosF-}, we obtain that
$$
\omega \left| \int_0^{p_2'} \frac{\omega \mathbf{1}_{F_->0}}{\sqrt{F_-}} g(p_2) \dd p_2 \right| \lesssim \omega^{\frac 32} \left| \int_0^{p_2'} \frac{1} {\sqrt{p_2'-p_2}} g(p_2) \dd p_2 \right| 
$$

Finally, splitting the integral between $|p-p_1'| < \langle t \rangle^{-N}$ and $|p-p_1'| < \langle t \rangle^{-N}$ for $N$ big enough, and using Lemma \ref{lem:weakLinfty}, we get
$$
I_- \lesssim \omega^{\frac 32} \left[ \log \langle t \rangle \| g \|_{L^2} + \langle t \rangle^{-\frac N 2} \| g \|_{L^\infty} \right] \lesssim \omega^{\frac 32} \langle t \rangle^{b+} +  \omega^{\frac 32} \langle t \rangle^{2-\frac N 2} .
$$

We now combine the estimates on $I_+$ and $I_-$ to integrate the previous ODE in time, which gives 
$$
|g(t,p)| \lesssim | g_{0}(p) |+ \omega^{\frac 32} \langle t \rangle^{ b + 1+}.
$$

\medskip

\noindent $(ii)$ We proceed as in $(i)$, except that we do not neglect the term $a(p) f(t,p)$. This gives the differential inequality
$$
\left| \frac{d}{dt} g(t,p) + a(p) g(t,p) \right| \lesssim \omega^{\frac 32} \langle t \rangle^{b+}
$$
or
$$
\left| \frac{d}{dt} \left[ e^{a(p) t} g(t,p) \right] \right| \lesssim e^{a(p) t} \omega^{\frac 32} \langle t \rangle^{b+},
$$
which can be integrated to give
$$
|g(t,p)| \lesssim e^{- a(p) t} g_0(p) + \omega^{\frac 32} e^{-a(p) t} \int_0^t e^{a(p)s} \langle s \rangle^{b+} \dd s.
$$
If $\omega > \langle t \rangle^{-\alpha}$, the first term on the right-hand side decays faster than any polynomial. Turning to the second term, we bound it in a straightforward way and use that $a(p) \sim \omega^{5/3}$ to get
$$
\left| \omega^{\frac 32} e^{-a(p) t} \int_0^t e^{a(p)s} \langle s \rangle^{b+} \dd s \right| \lesssim \omega^{\frac 32} \langle t \rangle^{b+} \int_0^t e^{a(p)s} \dd s \lesssim \omega^{- \frac 16} \langle t \rangle^{b+}.
$$
\end{proof}

\subsection{Iterative improvement}

We will now apply iteratively lemmas \ref{lemm:ODEf for m} and \ref{lem: Step 1 of the iteration}, having set $\alpha = \frac 35 - \epsilon_0$, with $0 <\epsilon_0 \ll 1$. In the following, we denote $t^{a +}$ for $t^{a + C\epsilon_0}$, for a constant $C$, instead of our usual notation $t^{a+}$ meaning $t^{a + \delta}$ for any $\delta>0$.

\begin{itemize}
\item Applying first Lemma \ref{lem: Step 1 of the iteration} $(i)$ with $b=0$ gives, provided $f_0 \in L^2$,
$$
|g(t,p)| \lesssim |g_0(p)| + \omega^{\frac 32} \langle t \rangle^{1+} .
$$
Thus, assuming $|f_0(p)| \leq 1$, we get
$$
q(t) \lesssim 1 + \langle t \rangle^{1 - \frac 32 \cdot \frac 35 +} \lesssim \langle t \rangle^{ \frac 1 {10} +}.
$$

\item Applying Lemma \ref{lemm:ODEf for m} with $e = \frac 1 {10} +$ gives
$$
m(t) \lesssim \langle t \rangle^{\frac 15 - \frac 35 +} = \langle t \rangle^{- \frac 25 +}.
$$
This gives
$$
\| g(t) \|_{L^2} \lesssim m(t)^{\frac 12} + q(t) \langle t \rangle^{-\frac 3 {10} +} \sim \langle t \rangle^{-\frac 15 +}.
$$

\item Applying Lemma \ref{lem: Step 1 of the iteration} $(i)$ with $b = - \frac 15 +$ gives
$$
|g(t,p)| \lesssim |g_0(p)| + \omega^{\frac 32} \langle t \rangle^{\frac 45+}.
$$
Assuming $|f_0(p)| \lesssim \omega^{\frac 16}$, this gives
$$
q(t) \lesssim \langle t \rangle^{- \frac 16 \cdot \frac 35 +} + \langle t \rangle^{\frac 45 - \frac 32 \cdot \frac 35 +} \sim \langle t \rangle^{- \frac 1 {10} +}.
$$

\item Applying Lemma \ref{lemm:ODEf for m} with $e = - \frac 1 {10}$ gives
$$
m(t) \lesssim \langle t \rangle^{-\frac 15 - \frac 35 +} = \langle t \rangle^{-\frac 45 +},
$$
hence
$$
\| g(t) \|_{L^2} \lesssim \langle t \rangle^{-\frac 25 +} + \langle t \rangle^{-\frac 1 {10} - \frac 3 {10} +} \sim \langle t \rangle^{-\frac 25 +}.
$$

\item Applying Lemma \ref{lem: Step 1 of the iteration} $(i)$ with $b = - \frac 25$, we get
$$
|g(t,p)| \lesssim |g_0(p)| + \omega^{\frac 32 } \langle t \rangle^{\frac 35+}.
$$
If $|f_0(p)| \leq \omega^\nu$ with $\nu \in [\frac 16,\frac 12]$, this gives
\begin{equation}
\label{boundq}
q(t) \lesssim \langle t \rangle^{- \frac 35 \nu +} + \langle t \rangle^{- \frac 3 {10} +} \sim  \langle t \rangle^{- \frac 35 \nu +}.
\end{equation}

\item Applying Lemma \ref{lemm:ODEf for m} with $e =- \frac 35 \nu +$ gives finally
$$
m(t) \lesssim \langle t \rangle^{- \frac {6 \nu}{5} - \frac 35 +}
$$
hence
\begin{equation}
\label{boundm}
\| g(t) \|_{L^2} \lesssim \langle t \rangle^{- \frac {3 \nu}{5} - \frac 3{10} +}. 
\end{equation}
\end{itemize}

We can now prove our final pointwise bound, under the assumption made above that $|f_0(p)| \leq \omega^\nu$. In the edges, we use \eqref{boundq} to get that
$$
\omega^\mu | g(t,p) | \lesssim \langle t \rangle^{-\frac 35 (\mu + \nu) +} \qquad \mbox{if $\omega < \langle t \rangle^{-\alpha}$}.
$$
In the bulk, we use Lemma \ref{lem: Step 1 of the iteration} $(ii)$ and \eqref{boundm} to get that
$$
\omega^{\frac 16} | g(t,p) | \lesssim \langle t \rangle^{- \frac {6 \nu + 3}{10}} \qquad \mbox{if $\omega > \langle t \rangle^{-\alpha}$}.
$$
Combining these two bounds results in the following theorem.

\begin{theorem}[Pointwise decay] \label{theoremdecay}
Assume that $g_0 \in L^\infty$, with a zero projection (in $L^2$) on $\operatorname{Ker} L$. Then for any $\mu ,\nu \in (\frac 16, \frac 12)$ and any $\delta>0$
$$
\| \omega^\mu e^{tL} g \|_{L^\infty} \lesssim_\delta \langle t \rangle^{- \frac{3}{5}(\mu + \nu)+ \delta} \| \omega^{-\nu} g_0(p) \|_{L^\infty}.
$$
\end{theorem}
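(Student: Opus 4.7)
My plan is a bootstrap iteration that alternates the two lemmas just proved: Lemma \ref{lemm:ODEf for m}, which feeds a pointwise edge bound into an $L^2$ bulk decay via the dissipation of Proposition \ref{propositiondissipation}, and Lemma \ref{lem: Step 1 of the iteration}, which feeds an $L^2$ decay rate back into a pointwise bound via Duhamel applied to $\partial_t g = -ag - K_1 g + 2K_2 g$. I fix $\alpha = \tfrac{3}{5} - \epsilon_0$ with $\epsilon_0>0$ arbitrarily small, so the dissipation weight $a(p) \sim \omega^{5/3}$ produces the factor $\langle t\rangle^{-5\alpha/3}$ driving the bulk ODE. Without loss of generality, I normalize $\|\omega^{-\nu} g_0\|_{L^\infty}=1$, so in particular $|g_0(p)| \lesssim \omega^\nu$, which will cap the edge contribution in Lemma \ref{lem: Step 1 of the iteration}(i).

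Starting from the trivial input $q(t) \leq \langle t\rangle^0$, one applies the cycle displayed in the iterative improvement subsection: Lemma \ref{lem: Step 1 of the iteration}(i) gives $q(t) \lesssim \langle t\rangle^{1/10+}$, then Lemma \ref{lemm:ODEf for m} gives $m(t) \lesssim \langle t\rangle^{-2/5+}$ and hence $\|g(t)\|_{L^2} \lesssim \langle t\rangle^{-1/5+}$, then Lemma \ref{lem: Step 1 of the iteration}(i) gives $q(t) \lesssim \langle t\rangle^{-1/10+}$, etc. Each full cycle decreases the $L^2$ exponent $b$ and the edge exponent $e$ by $1/5$, up to $\epsilon_0$-losses. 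After finitely many turns, the $|g_0(p)| \lesssim \omega^\nu$ contribution saturates the edge estimate, yielding
\[
q(t) \lesssim \langle t\rangle^{-3\nu/5 +} \qquad \mbox{and} \qquad \|g(t)\|_{L^2} \lesssim \langle t\rangle^{-3\nu/5 - 3/10 +}.
\]

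To conclude I split $[0,2\pi] = \mathcal{E}_{t,\alpha} \cup \mathcal{B}_{t,\alpha}$. On the edges $\omega < \langle t\rangle^{-\alpha}$, we have $\omega^\mu \leq \langle t\rangle^{-\alpha \mu} \approx \langle t\rangle^{-3\mu/5+}$, so multiplying by the $q(t)$ bound above produces exactly $\omega^\mu |g(t,p)| \lesssim \langle t\rangle^{-3(\mu+\nu)/5 +}$. On the bulk $\omega > \langle t\rangle^{-\alpha}$, I invoke Lemma \ref{lem: Step 1 of the iteration}(ii) with $b = -3\nu/5 - 3/10 +$, which produces $|g(t,p)| \lesssim \omega^{-1/6} \langle t\rangle^{-3\nu/5 - 3/10 +}$. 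Since $\mu \geq 1/6$ and $\omega \leq 1$, multiplying by $\omega^\mu$ gives $\omega^\mu |g(t,p)| \lesssim \langle t\rangle^{-3\nu/5 - 3/10 +}$, and using $3\nu/5 + 3/10 \geq 3(\mu + \nu)/5$ precisely when $\mu \leq 1/2$ — which is assumed — matches the edge estimate at $\omega \sim \langle t\rangle^{-\alpha}$.

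The principal obstacle is bookkeeping rather than conceptual: one has to verify that the aggregate of the small $\epsilon_0$-losses (from $\alpha < 3/5$) and the $\log\langle t\rangle$ factor appearing in Lemma \ref{lem: Step 1 of the iteration}(i) can be compressed into a single arbitrary $\delta>0$, by choosing $\epsilon_0$ small enough. A minor check is that the hypothesis $b>-1$ in Lemma \ref{lem: Step 1 of the iteration}(i) remains valid throughout the iteration, since the $L^2$ exponents stay in $[-3\nu/5 - 3/10, 0]$; and that the normalisation $m(0)+n(0) \leq 1$ required for Lemma \ref{lemm:ODEf for m} is available once we use $\|g_0\|_{L^\infty} \lesssim 1$ together with the conservation of the orthogonal projection on $(\operatorname{Ker} L)^\perp$ by the semigroup.
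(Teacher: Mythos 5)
Your proposal is correct and follows essentially the same route as the paper: the same choice $\alpha=\tfrac35-\epsilon_0$, the same alternation of Lemma \ref{lem: Step 1 of the iteration}(i) and Lemma \ref{lemm:ODEf for m} driving the exponents down in steps of $\tfrac15$ until the $|g_0|\lesssim\omega^\nu$ contribution saturates at $q(t)\lesssim\langle t\rangle^{-3\nu/5+}$ and $\|g(t)\|_{L^2}\lesssim\langle t\rangle^{-3\nu/5-3/10+}$, and the same final edge/bulk split using Lemma \ref{lem: Step 1 of the iteration}(ii) in the bulk. Your explicit check that $\tfrac{3\nu}{5}+\tfrac{3}{10}\geq\tfrac{3}{5}(\mu+\nu)$ exactly when $\mu\leq\tfrac12$ is the matching condition the paper leaves implicit when combining the two regimes.
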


\section{Nonlinear stability}
\label{sectionnonlinear}
\begin{theorem}
\label{thmnonlinstab}
For any $\beta,\gamma >0$, there exists $\epsilon_0$ such that the following holds. If
$$
f(t=0) = \mathfrak{f}_{\beta,\gamma} [ 1 + g_0]
$$
where 
$$
\int \mathfrak{f}_{\beta,\gamma} g_0 \dd p = \int \omega \mathfrak{f}_{\beta,\gamma} g_0 \dd p = 0
$$
and
$$
\| \omega^{-\frac 12} g_0 \|_{L^\infty} = \epsilon < \epsilon_0,
$$
then there exists a global solution which can be written
$$
f(t) = \mathfrak{f}_{\beta,\gamma} [ 1 + g] \qquad \mbox{where} \quad \| \omega^{\frac{1}{2}} g(t,p) \|_{L^\infty_p} \lesssim \epsilon \langle t \rangle^{-\frac 35 + \frac{1}{1000}} \quad \mbox{for all $t \geq 0$}.
$$
\end{theorem}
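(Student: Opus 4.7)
The plan is a Duhamel bootstrap built on the sharp linear decay estimate of Theorem \ref{theoremdecay} taken at the endpoint $\mu=\nu=\tfrac12$ of its range, in a weighted $L^\infty$ norm matching the target rate.

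I would first derive the equation for the perturbation. Writing $f=\mathfrak{f}(1+g)$, expanding $(1+g_\ell)^{-1}=\sum_k(-g_\ell)^k$, and using that $\mathfrak{f}$ is stationary (so the zeroth-order piece vanishes), one obtains $\partial_t g = Lg + \mathcal{N}[g]$, where $\mathcal{N}[g]$ is a finite sum of monomials of degrees $2$ and $3$ in the values $g_0,g_1,g_2,g_3$, each of the schematic form
$$
\frac{1}{\mathfrak{f}(p_0)}\int \delta(\Sigma)\delta(\Omega)\Bigl[\prod_\ell \omega_\ell\Bigr]\Bigl[\prod_{\ell\neq j}\mathfrak{f}_\ell\Bigr]\,g_{i_1}g_{i_2}\ (\text{or}\ g_{i_1}g_{i_2}g_{i_3})\,\dd p_1\dd p_2\dd p_3,
$$
with indices in $\{0,1,2,3\}\setminus\{j\}$ all distinct; in particular no pure square $g_\ell^2$ arises. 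Mass and energy conservation of $\mathcal{C}$ transfer to $\int \mathfrak{f}\,\mathcal{N}[g]\dd p=\int \omega\mathfrak{f}\,\mathcal{N}[g]\dd p=0$, so combined with the hypothesis $\int \mathfrak{f}g_0=\int \omega\mathfrak{f}g_0=0$ and $\ker L=\mathrm{span}(\mathfrak{f},\omega\mathfrak{f})$, the orthogonality $g(t)\perp\ker L$ is preserved in time. This is the precondition needed to apply Theorem \ref{theoremdecay} at each time slice.

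Second, I would set up the bootstrap. Local existence in $\omega^{-1/2}L^\infty$ follows from Theorem \ref{theo: Linfty lwp} with $\alpha=-\tfrac12$. Define
$$
\|g\|_{X_T}:=\sup_{0\le t\le T}\langle t\rangle^{\frac35-\frac1{1000}}\,\|\omega^{1/2}g(t)\|_{L^\infty}
$$
and aim to prove $\|g\|_{X_T}\le C_0\epsilon$ uniformly in $T$ under the a-priori hypothesis $\|g\|_{X_T}\le 2C_0\epsilon$. Fixing $\delta\ll\tfrac{1}{1000}$ in Theorem \ref{theoremdecay} and inserting the Duhamel formula,
$$
\|\omega^{1/2}g(t)\|_{L^\infty}\lesssim \langle t\rangle^{-\frac35+\delta}\epsilon + \int_0^t \langle t-s\rangle^{-\frac35+\delta}\,\|\omega^{-1/2}\mathcal{N}[g](s)\|_{L^\infty}\,\dd s.
$$
Provided one can prove the nonlinear estimate
$$
\|\omega^{-1/2}\mathcal{N}[g](s)\|_{L^\infty}\lesssim \|g\|_{X_s}^2\,\langle s\rangle^{-\frac65+2\delta},
$$
the elementary convolution bound $\int_0^t\langle t-s\rangle^{-3/5+}\langle s\rangle^{-6/5+}\dd s\lesssim \langle t\rangle^{-3/5+}$ closes the bootstrap for $\epsilon$ sufficiently small.

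The \textbf{main obstacle} is this nonlinear estimate. The bootstrap allows $|g_\ell|\lesssim \epsilon\,\omega_\ell^{-1/2}\langle s\rangle^{-3/5+}$, so a quadratic monomial $g_ig_j$ is a priori singular like $(\omega_i\omega_j)^{-1/2}$ and a cubic like $(\omega_1\omega_2\omega_3)^{-1/2}$. The collision kernel compensates: using $\sqrt{F_+}\gtrsim\sqrt{\omega_0\omega_2}$, the prefactor $\omega_0\omega_1\omega_2\omega_3/\sqrt{F_+}$ contributes an $\omega_0^{1/2}$ factor (which matches the weight $\omega^{-1/2}$ on the left-hand side), while $\omega_0\lesssim\omega_1\omega_2\omega_3$ on the resonance manifold (Lemma \ref{lemmaomega123}) absorbs the $\omega_{1,2,3}^{-1/2}$'s from the $g$-factors; the $\mathfrak{f}_\ell$'s are bounded because $\gamma>0$ and drop out. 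For the monomials containing $g(p_1)$ one switches to the $p_1$-parameterization with kernel $1/\sqrt{F_-}$, whose integrable singularity at $p_1=p_1'(p)$ is treated by combining the pointwise bound $|g(p_1)|\le \|g\|_{X_s}\,\omega_1^{-1/2}\langle s\rangle^{-3/5+}$ with the finite integral $\int_0^{p_1'}\omega_1^{-1/2}(p_1'-p_1)^{-1/2}\dd p_1\lesssim 1$, mirroring the weighted Schur argument from the compactness proof for $a^{-1/2}K_1 a^{-1/2}$. Cubic monomials win an extra factor $\epsilon\,\langle s\rangle^{-3/5+}$ and fit comfortably in the budget, so the genuinely borderline case is quadratic. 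A final bookkeeping point: every $+\delta$ loss (from Theorem \ref{theoremdecay} and from logarithms in Schur estimates) must be absorbed by the $-1/1000$ margin in $\|\cdot\|_{X_T}$, which forces $\delta\ll 1/1000$ from the outset and meticulous tracking of all loss exponents.
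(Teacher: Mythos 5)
Your proposal is, in substance, the paper's own argument: Duhamel's formula, the linear decay of Theorem \ref{theoremdecay} applied at the endpoint $\mu=\nu=\tfrac12$ (legitimately, since the conservation laws force the perturbation and hence the nonlinearity to have zero projection on $\operatorname{Ker}L=\mathrm{span}(\mathfrak{f},\omega\mathfrak{f})$), the kernel bounds $\sqrt{F_+}\gtrsim\sqrt{\omega_0\omega_2}$ together with Lemma \ref{lemmaomega123}, and a bootstrap closing at rate $\langle t\rangle^{-3/5+}$. The one structural difference is bookkeeping: the paper's norm $\mathcal{B}_T$ carries two components, $\langle t\rangle^{2/5-\delta}\omega^{1/6}g$ and $\langle t\rangle^{3/5-\delta}\omega^{1/2}g$, and estimates the cubic term by splitting the weights as $\tfrac16+\tfrac13+\tfrac12$, whereas you keep only the $\omega^{1/2}$ component; your version still closes, because after absorbing the singular factors the worst cubic integrand is only $\sim\omega_2^{-1/2}$, which is integrable in $p_2$, so this is a harmless (indeed slightly leaner) variant. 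Two corrections to your sketch: (i) the inequality from Lemma \ref{lemmaomega123} is $\omega_1\omega_2\omega_3\lesssim\omega_0$, not $\omega_0\lesssim\omega_1\omega_2\omega_3$ as you wrote (the paper's Section \ref{sectionLWP} contains the same mis-transcription), and it is used to absorb the $\omega_0^{-1/2}$ coming from factors $g_0$ in the monomials, while the $\omega_{1,2,3}^{-1/2}$ from $g_1,g_2,g_3$ are absorbed directly by the explicit $\omega_1\omega_2\omega_3$ in the kernel numerator -- with the direction as you stated it, the $g_0g_1$ and $g_0g_ig_j$ terms would not close; (ii) switching to the $p_1$-parameterization with the $1/\sqrt{F_-}$ kernel is unnecessary in the nonlinear estimate, since only pointwise bounds $|g_1|\le\omega_1^{-1/2}\|\omega^{1/2}g\|_{L^\infty}$ are used under the $p_2$-parameterization (that switch is only needed in the linear analysis of $K_1$); this simplification is exactly what the paper does.
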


\begin{proof}
The full equation satisfied by the perturbation $g$ is
\begin{equation}
\label{equationg}
\partial_t g - L g = \mathcal{Q}(g) + \mathcal{C}(g),
\end{equation}
where
\begin{align*} 
& \mathcal{Q}[g](t,p_0)= \frac{2}{\mathfrak{f}_0} \int_0^{2\pi}
\frac{ \omega_0 \omega_1 \omega_2 \omega_3
 }{\sqrt{F_+(p_0,p_2)}}  \left[ \mathfrak{f}_2 \mathfrak{f}_3 g_2 g_3 - \mathfrak{f}_0 \mathfrak{f}_1 g_0 g_1 \right] \dd p_2 \\
&  \mathcal{C}[g](t,p_0)= \frac{1}{\mathfrak{f}_0} \int_0^{2\pi}
\frac{ \omega_0 \omega_1 \omega_2 \omega_3
 }{\sqrt{F_+(p_0,p_2)}}  \prod_{\ell=0}^3 \mathfrak{f}_\ell g_\ell \left(\frac{1}{\mathfrak{f} g}+\frac{1}{\mathfrak{f}_1 g_1}-\frac{1}{\mathfrak{f}_2 g_2}-\frac{1}{\mathfrak{f}_3 g_3}\right)
 \dd p_2.
\end{align*} 

Duhamel's formula gives the equivalent formulation
\begin{equation}
\label{Duhamelform}
g(t) = S_t g_0 + \int_0^t S_{t-s} \left( \mathcal{Q}[g](s,p) +  \mathcal{C}[g](s,p)\right) \dd s.
\end{equation}

The key norm that will be used to analyze this problem is, for $T>0$,
\begin{equation} \label{def:B_t norm}
\| g \|_{\mathcal{B}_T} := 
 \left\| \langle t \rangle^{\frac 2 5 - \delta} \omega^{\frac 1 6} g \right\|_{L^\infty_{t,p}([0,T] \times \mathbb{T})} + \left\| \langle t \rangle^{\frac 3 5 - \delta} \omega^{\frac 1 2} g \right\|_{L^\infty_{t,p}([0,T] \times \mathbb{T})}.
\end{equation}

Our first lemma gives local well-posedness in $ \omega^{- \frac 16} L^\infty$ for this equation.

\begin{lemma}[Local well-posedness in $\omega^{-\frac 16} L^\infty$] 
\label{LWPg}
The equation \eqref{equationg} admits a unique maximal solution $g \in \mathcal{C}([0,T_0), \omega^{-\frac 16} L^\infty)$, where $T_0 >0$ and
$$
\lim_{t \to T_0} \| \omega^{\frac 16} g(t,p) \|_{L^\infty_p} = \infty \qquad \mbox{if $T_0 < \infty$}
$$
Furthermore, there exists a constant $C_0$ such that
$$
\lim_{t \to 0} \| g \|_{\mathcal{B}_t} \leq C_0 \epsilon,  
$$
where $\epsilon := \| \omega^{-\frac 12} g_0 \|_{L^\infty}$ and where the norm $\|\cdot\|_{\mathcal{B}_T}$ is defined in \eqref{def:B_t norm}.
\end{lemma}

We postpone the proof of this lemma for the time being, and admit its statement. 
We aim at proving that $T_0 = \infty$, and that the solution $g$ is actually decaying. This will be achieved through a boostrap argument bearing on the quantity $\| f \|_{\mathcal{B}_t}$. This bootstrap argument will rely on the two following lemmas, whose proofs we postpone for the moment.

\begin{lemma}[A priori bound] 
\label{apriori}
There exists a constant $C_0 >0$ such that: for any $T>0$, if $g$ is a solution in $\mathcal{C}([0,T],\omega^{- \frac 16} L^\infty)$, then
$$
\| g \|_{\mathcal{B}_T} \leq C_0 \left[ \epsilon + \| g \|_{\mathcal{B}_T}^2 + \| g \|_{\mathcal{B}_T}^3 \right]
$$
where $\epsilon := \| \omega^{-\frac 12} g_0 \|_{L^\infty}$ and where $\|\cdot\|_{\mathcal{B}_T}$ is defined in \eqref{def:B_t norm}.
\end{lemma}

In the above lemma, we denoted $C_0$ for the constant, just like in Lemma \ref{LWPg}; it suffices to take $C_0$ to be the largest of the two to avoid any confusion.

\begin{lemma}[Bootstrap inequality] 
\label{bootstrap}
If $x(t)$ is a continuous function on $(0,T)$ such that
$$
x(t) \leq C_0 \left[ \epsilon + x(t)^2 + x(t)^3 \right] \qquad \mbox{and} \qquad 
\lim_{t\to 0} x(t) \leq C_0 \epsilon,
$$
then  
$$
x(t) \leq 2C_0 \epsilon \qquad \mbox{for any} \; t \in (0,T).
$$
\end{lemma}

We now perform the bootstrap argument: 
arguing by contradiction, let us assume that $T_0 < \infty$. By Lemma \ref{apriori} and Lemma \ref{bootstrap}, we get that $\| g \|_{\mathcal{B}_t} \leq 2 C_o \epsilon$ for all $t < T_0$. But this contradicts the blow up criterion in Lemma \ref{LWPg}. Therefore, $T_0 = \infty$. A new application of Lemma \ref{apriori} and Lemma \ref{bootstrap} gives the desired result!
\end{proof}

\begin{proof}[Proof of Lemma \ref{LWPg}] It is very similar to the proof of Theorem \ref{theo: Linfty lwp} above and Lemma \ref{apriori} below, and will therefore be omitted.
\end{proof}

\begin{proof}[Proof of Lemma \ref{apriori}] The linear term on the right-hand side of \eqref{Duhamelform} can be bounded by Theorem \ref{theoremdecay}:
$$
\left\| S_t g_0 \right\|_{\mathcal{B}_\infty} \lesssim \| \omega^{-\frac 12} g \|_{L^\infty} < \epsilon 
$$

To deal with the quadratic and cubic terms, we will use the conservation laws of mass and energy. They imply that 
\begin{align*}
& \int \mathfrak{f}_{\beta,\gamma}(p)(1 + g(t,p)) \dd p = \int \mathfrak{f}_{\beta,\gamma}(p) \dd p \\
& \int \omega(p) \mathfrak{f}_{\beta,\gamma}(p)(1 + g(t,p)) \dd p = \int \omega(p) \mathfrak{f}_{\beta,\gamma}(p) \dd p 
\end{align*}
or in other words
$$
\int \mathfrak{f}_{\beta,\gamma}(p) g(t,p) \dd p = \int \omega(p) \mathfrak{f}_{\beta,\gamma}(p) g(t,p) \dd p = 0.
$$

As a consequence, the orthogonal projection (in $L^2$) of $g$ and $Lg$ on $\operatorname{Ker} L$ is zero. For the equation \eqref{Duhamelform} satisfied by $g$, this implies that the projection of the quadratic and cubic terms on $\operatorname{Ker} L$ is also zero. Therefore, we can apply Theorem \ref{theoremdecay} with $\mu = \nu = \frac 12$: if $t \in [0,T]$,
\begin{align*}
& \left\| \omega^{\frac 12} \int_0^t S_{t-s} \left( \mathcal{Q}[g](s,p) +  \mathcal{C}[g](s,p)\right) \dd s\right\|_{L^\infty_p} \\
& \qquad \lesssim \int_0^t  \langle t-s \rangle^{-\frac 35 + \delta} \left\| \omega^{-\frac 12}  \mathcal{Q}[g](s,p)\right\|_{L^\infty} \dd s + \int_0^t  \langle t-s \rangle^{-\frac 35 + \delta} \left\| \omega^{-\frac 12}  \mathcal{C}[g](s,p) \right\|_{L^\infty} \dd s\\
& \qquad = I + II.
\end{align*}
Using that $F_+ \gtrsim \omega_0 \omega_2$ and $\omega_1 \omega_2 \omega_3 \lesssim \omega_0$,
\begin{align*}
I &\lesssim \int_0^t \langle t-s \rangle^{-\frac 35 + \delta}\left\| \int \sqrt{\omega_2} \omega_1 \omega_3 [ |g_2 g_3| + |g_0 g_1|] \dd p_2 \right\|_{L^\infty} \dd s  \\
&\lesssim  \int_0^t \langle t-s \rangle^{-\frac 35 + \delta}  \| \omega^{\frac{1}{2}}g \|_{L^\infty}^2 \dd s  \lesssim \| g\|_{\mathcal{B}}^2 \int_0^t \langle t-s \rangle^{-\frac 35 + \delta}  \langle s \rangle^{\left(- \frac 35 + \delta \right)\cdot 2} \dd s \\
& \lesssim \| g \|_{\mathcal{B}}^2 \langle t \rangle^{- \frac 35 + \delta}.
\end{align*}
Similarly,
\begin{align*}
II & \lesssim  \int_0^t \langle t-s \rangle^{-\frac 35 + \delta}\left\| \int \sqrt{\omega_2} \omega_1 \omega_3 [ |g_1 g_2 g_3| +  |g_0 g_2 g_3| + |g_0 g_1 g_2| + |g_0 g_1 g_3| ] \dd p_2 \right\|_{L^\infty} \dd s  \\
& \lesssim  \int_0^t \langle t-s \rangle^{-\frac 35 + \delta} \| \omega^{\frac{1}{6}} g \|_{L^\infty} \| \omega^{\frac{1}{3}} g \|_{L^\infty} \| \omega^{\frac{1}{2}} g \|_{L^\infty} \dd s \\
& \lesssim  \| g \|_{\mathcal{B}}^3 \int_0^t \langle t-s \rangle^{-\frac 35 + \delta} \langle s \rangle^{ -\frac 25 - \frac 12 - \frac 35 + 4\delta} \dd s \\
& \lesssim   \| g\|_{\mathcal{B}}^3  \langle t \rangle^{-\frac 35 + \delta}.
\end{align*}

Using now Theorem \ref{theoremdecay} with $\mu = \frac 16$ and $\nu = \frac 12$,
\begin{align*}
& \left\| \omega^{\frac 16} \int_0^t S_{t-s} \left( \mathcal{Q}[g](s,p) +  \mathcal{C}[g](s,p)\right) \dd s\right\|_{L^\infty_p} \\
& \qquad \lesssim \int_0^t  \langle t-s \rangle^{-\frac 25 + \delta} \left\| \omega^{-\frac 12}  \mathcal{Q}[g](s,p)\right\|_{L^\infty} \dd s + \int_0^t  \langle t-s \rangle^{-\frac 25 + \delta} \left\| \omega^{-\frac 12}  \mathcal{C}[g](s,p) \right\|_{L^\infty} \dd s\\
& \qquad = III + IV.
\end{align*}

Taking once again advantage the inequalities $F_+ \gtrsim \omega_0 \omega_2$ and $\omega_1 \omega_2 \omega_3 \lesssim \omega_0$, 
\begin{align*}
III & \lesssim  \int_0^t \langle t-s \rangle^{-\frac 25 + \delta}\left\| \int \sqrt{\omega_2} \omega_1 \omega_3 [ |g_2 g_3| + |g_0 g_1|] \dd p_2 \right\|_{L^\infty} \dd s \\
& \lesssim \int_0^t \langle t-s \rangle^{-\frac 25 + \delta} \| \omega^{\frac 12} g \|_{L^\infty_p} \dd s \\
& \lesssim \| g \|_{\mathcal{B}_t}^2 \int_0^t  \langle t-s \rangle^{-\frac 25 + \delta} \langle s \rangle^{\left(\frac 12 + \delta \right) \cdot 2} \\
& \lesssim \| g \|_{\mathcal{B}_t}^2 \langle t \rangle^{- \frac 25 + \delta}.
\end{align*}

Similarly,
\begin{align*}
IV & \lesssim  \int_0^t \langle t-s \rangle^{-\frac 25 + \delta}\left\| \int \sqrt{\omega_2} \omega_1 \omega_3 [ |g_1 g_2 g_3| +  |g_0 g_2 g_3| + |g_0 g_1 g_2| + |g_0 g_1 g_3| ] \dd p_2 \right\|_{L^\infty} \dd s  \\
& \lesssim  \int_0^t \langle t-s \rangle^{-\frac 25 + \delta} \| \omega^{\frac{1}{6}} g \|_{L^\infty} \| \omega^{\frac{1}{3}} g \|_{L^\infty} \| \omega^{\frac{1}{2}} g \|_{L^\infty} \dd s \\
& \lesssim  \| g \|_{\mathcal{B}}^3 \int_0^t \langle t-s \rangle^{-\frac 25 + \delta} \langle s \rangle^{ -\frac 25 - \frac 12 - \frac 35 + 4\delta} \dd s \\
& \lesssim   \| g \|_{\mathcal{B}}^3  \langle t \rangle^{-\frac 25 + \delta}.
\end{align*}
This gives the desired estimate.
\end{proof}

\begin{proof}[Proof of Lemma \ref{bootstrap}] Consider the function
$$
F: x \mapsto x - C_0 (\epsilon + x^2 + x^3).
$$
It is clear that $F$ is negative on $[0,C_0 \epsilon + \eta]$, for some $\eta>0$. It is also clear that
$$
F(2C_0 \epsilon) = C_0 \epsilon - 4 C_0^3 \epsilon^2 - 8 C_0^4 \epsilon^3 = C_0 \epsilon (1 - 4 C_0^2 \epsilon - 8 C_0^3 \epsilon^2) \geq C_0 \epsilon (1 - 4 C_0^2 \epsilon_0 - 8 C_0^3 \epsilon_0^2) \geq 0
$$
provided $\epsilon_0$ is chosen sufficiently small. The statement of the lemma follows by the intermediate value theorem.
\end{proof}

\section{Mass and Energy of RJ spectra}

\label{MERJ}

In this section, we investigate the following question: given a mass and an energy $(\mathcal{M}_0, \mathcal{E}_0)$, is there a Rayleigh-Jeans spectrum $\mathfrak{f}$ such that $\mathcal{M}(\mathfrak{f}) = \mathcal{M}_0, \mathcal{E}(\mathfrak{f}) = \mathcal{E}_0$? It turns out that the ratio $\mathcal{E}_0/\mathcal{M}_0$ decides whether such a RJ spectrum exists. We define the following functions:
\begin{equation*}
\begin{split}
\mathcal{M} (b, g) &= \int_{\mathbb{T}} \frac{\dd p}{b^{-1} \omega + g^{-1}}, \\
\mathcal{E} (b,g) &=\int_{\mathbb{T}} \frac{\omega \dd p}{b^{-1} \omega + g^{-1}},
\end{split}
\end{equation*}
where $0<b, g < \infty.$ We readily see that $0< \mathcal{M}(b, g), \mathcal{E} (b,g) < \infty,$ and
\begin{equation*}
    b^{-1} \mathcal{E} + g^{-1} \mathcal{M} = 2\pi.
\end{equation*} 
Moreover, we have the following differential equation:
\begin{equation*}
    b \partial_b \mathcal{M} + g \partial_g \mathcal{M} = \mathcal{M}.
\end{equation*}
We adopt polar coordinates to represent $(b,g)$:
\begin{equation*}
    b = r \cos \theta, \qquad g = r \sin \theta,
\end{equation*}
where $0<r<\infty$ and $0 < \theta < \pi/2$. By homogeneity,
\begin{equation*}
    \mathcal{M} (b, g) = r \mathcal{M} (\cos \theta, \sin \theta).
\end{equation*}
Therefore, for a given pair of positive numbers $(\mathcal{M}_0, \mathcal{E}_0)$, there exists $(b, g)$ such that $(\mathcal{M}_0, \mathcal{E}_0) = (\mathcal{M}(b,g), \mathcal{E}(b,g))$ if and only if the following equation on $\theta$ is solvable:
\begin{equation*}
    \frac{\mathcal{E}_0}{\cos \theta} + \frac{\mathcal{M}_0}{\sin \theta} = 2\pi r = \frac{2\pi \mathcal{M}_0}{\mathcal{M}(\cos \theta, \sin \theta )},
\end{equation*}
or
\begin{equation*}
    \frac{\mathcal{E}_0}{\mathcal{M}_0} = \frac{2\pi \cos \theta}{\mathcal{M}(\cos \theta, \sin \theta)} - \cot \theta = \frac{1}{\frac{1}{2\pi} \int_{\mathbb{T}} \frac{\dd p}{\omega+\cot \theta}} - \cot \theta.
\end{equation*}
Since $\theta \rightarrow \cot \theta$ is a bijection from $(0, \pi/2)$ to $(0, \infty),$ this is equivalent to that 
\begin{equation*}
    \frac{\mathcal{E}_0}{\mathcal{M}_0} \in F((0, \infty)),
\end{equation*}
where $$F(\ell ) = \frac{1}{\frac{1}{2\pi} \int_{\mathbb{T} } \frac{\dd p}{\omega + \ell }} - \ell.$$
Note that 
$$
F(0)=0 \quad \mbox{while} \quad F(\infty)=\frac{2}{\pi}.
$$ 
If we let $G(\ell) = F(\ell) + \ell,$ we see that $$\frac{\dd}{\dd \ell} G(\ell) = \frac{\frac{1}{2\pi} \int_{\mathbb{T}} \frac{1}{(\omega+\ell)^2} \dd p }{\left ( \frac{1}{2\pi} \int_{\mathbb{T} } \frac{\dd p}{\omega + \ell }\right )^2 } > 1$$
by Jensen's inequality (since neither $x \rightarrow x^2$ is affine nor $p \rightarrow \frac{1}{\omega+\ell}$ is constant, the inequality is strict), which implies that $\ell \rightarrow F(\ell)$ is strictly increasing. To summarize, we have the following:
\begin{proposition}
    Let $(\mathcal{M}_0, \mathcal{E}_0)$ be a pair of positive numbers. Then there exists a Rayleigh-Jeans spectrum $\mathfrak{f} = \frac{1}{\beta \omega + \gamma}$, $0< \beta, \gamma < \infty$, such that $\mathcal{M} (\mathfrak{f} ) = \mathcal{M}_0$ and $\mathcal{E} (\mathfrak{f} ) = \mathcal{E}_0$ hold if and only if $$ 0 < \frac{\mathcal{E}_0}{\mathcal{M}_0} < \frac{2}{\pi}.$$ Moreover, if the latter is the case, such $\mathfrak{f}$ is unique and determined by the following:
    \begin{equation*}
    \begin{split}
        \beta &= \left ( r \cos \theta \right )^{-1}, \qquad \gamma = \left ( r \sin \theta \right )^{-1}, \\
        \theta &= \arctan \left [ \left (F^{-1} \left (\frac{\mathcal{E}_0}{\mathcal{M}_0}\right )\right )^{-1} \right ], \qquad r = \frac{ \mathcal{M}_0}{\frac{\cos\theta}{2\pi} \int_{\mathbb{T}} \frac{\dd p}{\omega + \cot \theta}}.
    \end{split}
    \end{equation*}
\end{proposition}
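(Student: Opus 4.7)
The plan is to reduce the two-dimensional existence problem for $(\beta,\gamma)$ to a one-dimensional scalar equation via homogeneity, then analyze the resulting one-variable function. I would first reparameterize $\mathfrak{f} = \frac{1}{\beta \omega + \gamma}$ by setting $(\beta,\gamma) = (b^{-1}, g^{-1})$ with $(b,g) = (r \cos\theta, r \sin\theta)$, $r > 0$ and $\theta \in (0, \pi/2)$. The observation that $\mathcal{M}(b,g)$ and $\mathcal{E}(b,g)$ are homogeneous of degree one in $(b,g)$, combined with the identity $b^{-1} \mathcal{E} + g^{-1} \mathcal{M} = 2\pi$ (which comes from writing $1 = \frac{b^{-1}\omega + g^{-1}}{b^{-1}\omega + g^{-1}}$ and integrating), allows $r$ to be eliminated. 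This reduces the system $\mathcal{M}(b,g) = \mathcal{M}_0$, $\mathcal{E}(b,g) = \mathcal{E}_0$ to the scalar equation
\[
\frac{\mathcal{E}_0}{\mathcal{M}_0} = F(\cot\theta), \qquad F(\ell) = \left( \frac{1}{2\pi} \int_{\mathbb{T}} \frac{\dd p}{\omega + \ell}\right)^{-1} - \ell,
\]
since $\theta \mapsto \cot\theta$ is a bijection from $(0,\pi/2)$ to $(0,\infty)$.

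Next I would determine the image of $F: (0,\infty) \to \mathbb{R}$ by computing its boundary values. For $\ell \to 0^+$, the fact that $\omega(p) = |\sin(p/2)|$ vanishes linearly at $p = 0, 2\pi$ makes $1/\omega$ non-integrable on $\mathbb{T}$, so $\int \frac{\dd p}{\omega+\ell} \to \infty$ by monotone convergence and hence $F(0^+) = 0$. For $\ell \to \infty$, a first-order expansion gives $\frac{1}{2\pi} \int \frac{\dd p}{\omega+\ell} = \frac{1}{\ell} - \frac{1}{2\pi\ell^2} \int \omega \,\dd p + O(\ell^{-3})$, so $F(\ell) \to \frac{1}{2\pi} \int_{\mathbb{T}} \omega \,\dd p = \frac{4}{2\pi} = \frac{2}{\pi}$.

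The key monotonicity step is to show that $G(\ell) := F(\ell) + \ell = \bigl(\frac{1}{2\pi}\int \frac{\dd p}{\omega+\ell}\bigr)^{-1}$ satisfies $G'(\ell) > 1$. Differentiating,
\[
G'(\ell) = \frac{\frac{1}{2\pi} \int_{\mathbb{T}} \frac{\dd p}{(\omega+\ell)^2}}{\left(\frac{1}{2\pi}\int_{\mathbb{T}} \frac{\dd p}{\omega+\ell}\right)^2},
\]
and this exceeds $1$ by Jensen's inequality applied to the strictly convex function $x \mapsto x^2$ with the probability measure $\frac{\dd p}{2\pi}$ on $\mathbb{T}$; strictness holds because $p \mapsto 1/(\omega+\ell)$ is non-constant. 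Hence $F$ is strictly increasing, and therefore a continuous bijection $(0,\infty) \to (0, 2/\pi)$.

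Putting it together: the dichotomy $\mathcal{E}_0/\mathcal{M}_0 \in (0, 2/\pi)$ is both necessary and sufficient for solvability, and uniqueness of $\theta$ (hence of $(\beta, \gamma)$) follows from strict monotonicity of $F$. Unwinding the parameterization yields the explicit formulas $\theta = \arctan((F^{-1}(\mathcal{E}_0/\mathcal{M}_0))^{-1})$ and $r = \mathcal{M}_0 / \mathcal{M}(\cos\theta, \sin\theta)$, with $\beta = (r\cos\theta)^{-1}$, $\gamma = (r\sin\theta)^{-1}$. The only nontrivial technical input is the integrability analysis giving $F(0^+) = 0$ and $F(\infty) = 2/\pi$; the rest is algebra and one application of Jensen.
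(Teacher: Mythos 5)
Your proposal is correct and follows essentially the same route as the paper: polar coordinates $(b,g)=(r\cos\theta,r\sin\theta)$, the identity $b^{-1}\mathcal{E}+g^{-1}\mathcal{M}=2\pi$ together with degree-one homogeneity to eliminate $r$, reduction to the scalar equation $\mathcal{E}_0/\mathcal{M}_0=F(\cot\theta)$, strict monotonicity of $F$ via Jensen applied to $G(\ell)=F(\ell)+\ell$, and the boundary values $F(0^+)=0$, $F(\infty)=2/\pi$ (where you usefully spell out the non-integrability of $1/\omega$ and the large-$\ell$ expansion, which the paper leaves implicit). Your form $r=\mathcal{M}_0/\mathcal{M}(\cos\theta,\sin\theta)$ is the one forced by homogeneity and matches the paper's derivation in the text.
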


Before finishing this section, let us comment on a related conjecture made in \cite{Nazar23}. For the truncated Wave Kinetic Equation arising from NLS, the presence or absence of a finite-time blow-up is expected to depend on whether the ratio of mass to energy is sufficiently large. Thus, given our proposition above, we expect that initial data whose mass and energy do not correspond to a Rayleigh-Jeans distribution will form a condensate in finite time. Proving rigorously that these condensates are stationary solutions remains an interesting open problem.

\appendix

\section{Calculus lemmas}\label{App: calculus lemmas}

In this section, we gathered some nontrivial computations which are used in the rest of the article. Many of the formulas we derive already appeared in \cite{LukkarinenSpohn2008} and some are new. Many proofs are also close to \cite{LukkarinenSpohn2008} and are provided for completeness and for the reader's convenience.

\subsection{Integration on the resonant manifold}
If $x,y,z \in [0,2\pi]$, let 
$$
\Omega(x,y,z) = \sin\left( \frac x2 \right) + \sin \left( \frac y2 \right) - \sin \left( \frac z2 \right) - \left| \sin \left( \frac{x+y-z}{2} \right) \right|. 
$$

Recall that
$$
h(x,z) = \frac{z-x}{2} + 2 \arcsin \left( \tan \frac{|z-x|}{4} \cos \frac{z+x} 4 \right).
$$

\begin{lemma}[Parameterization of the resonant manifold]
\label{lemmaparameterization}
The zero set $Z$ of $\Omega$ on $[0,2\pi]^3$ can be split into $Z = Z_+ \cup Z_-$, where
\begin{align*}
& Z_+ = Z \cap \{ 0 \leq x + y - z \leq 2\pi \} \\
& Z_- = Z \cap \{ x+ y -z \leq 0 \; \mbox{or} \; x+ y - z \geq 2\pi \}.
\end{align*}

The set $Z_+$ consists of $(x,y,z) \in [0,2\pi]^3$ such that $x=y \in \{0,2\pi\}$ or $x=z$ or $y=z$.

As for $Z_-$, it can be described as follows: it consists of $(x,y,z) \in [0,2\pi]^3$ such that
\begin{itemize}
\item either $x<z$ and $y = h(x,z)$
\item or $x>z$ and $y = h(x,z) + 2\pi$.
\end{itemize}
\end{lemma}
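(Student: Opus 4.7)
The plan is to reduce $\Omega = 0$ to a product or single-equation form on each subregion, reading off the zero set directly on $Z_+$ and inverting for $y$ on $Z_-$.

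On $Z_+$ the quantity $(x+y-z)/2$ lies in $[0,\pi]$, so $|\sin((x+y-z)/2)| = \sin((x+y-z)/2)$ and the absolute value disappears. Two applications of the sum-to-product identities (grouping $\sin(x/2) - \sin(z/2)$ and $\sin(y/2) - \sin((x+y-z)/2)$, both of which share the factor $\sin((x-z)/4)$) yield
\[
\Omega(x,y,z) \;=\; 4 \sin\!\left(\tfrac{x-z}{4}\right) \sin\!\left(\tfrac{x+y}{4}\right) \sin\!\left(\tfrac{y-z}{4}\right).
\]
On $[0,2\pi]^3$ the three factors vanish respectively at $x=z$, at $x+y\in\{0,4\pi\}$ (since $(x+y)/4 \in [0,\pi]$), and at $y=z$. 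Intersecting with $\{0 \le x+y-z \le 2\pi\}$ gives precisely the stated description of $Z_+$ (the degenerate cases $x+y \in \{0,4\pi\}$ collapse to $(0,0,0)$ and $(2\pi,2\pi,2\pi)$, both already in the $x=z$ locus).

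On $Z_-$ instead $\sin((x+y-z)/2) \le 0$, so the absolute value flips sign. A different grouping, $(\sin(x/2) - \sin(z/2)) + (\sin(y/2) + \sin((x+y-z)/2))$, followed by sum-to-product gives
\[
\Omega \;=\; 2\cos\!\left(\tfrac{x+z}{4}\right)\sin\!\left(\tfrac{x-z}{4}\right) + 2\cos\!\left(\tfrac{x-z}{4}\right)\sin\!\left(\tfrac{x+2y-z}{4}\right).
\]
Away from the corner $|z-x|=2\pi$, the factor $\cos((x-z)/4)$ is strictly positive, so $\Omega = 0$ is equivalent to the single scalar equation
\[
\sin\!\left(\tfrac{x+2y-z}{4}\right) \;=\; \cos\!\left(\tfrac{x+z}{4}\right) \tan\!\left(\tfrac{z-x}{4}\right).
\]

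To invert this in $y$, split by the sign of $z-x$. For $x<z$, the constraint $x+y-z \le 0$ forces $y \in [0, z-x]$, so $(x+2y-z)/4 \in [-(z-x)/4, (z-x)/4] \subset (-\pi/2,\pi/2)$, where $\sin$ is strictly increasing. Applying $\arcsin$ and using $\tan((z-x)/4) = \tan(|z-x|/4)$ gives $y = \tfrac{z-x}{2} + 2\arcsin(\cos((x+z)/4)\tan(|z-x|/4)) = h(x,z)$. For $x>z$, the condition $x+y-z \ge 2\pi$ puts $\tilde y := y-2\pi \in [-(x-z), 0]$; the identity $(x+2y-z)/4 = (x+2\tilde y-z)/4 + \pi$ flips the sign of the left-hand sine, while $\tan((z-x)/4) = -\tan(|z-x|/4)$ flips the right-hand side, and the two sign flips cancel, so $\tilde y$ satisfies the same equation solved above, whence $\tilde y = h(x,z)$ and $y = h(x,z) + 2\pi$. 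The main technical nuisance lies in the sign and range bookkeeping: one must check that the right-hand side of the reduced equation indeed lies in $[-1,1]$ wherever a solution exists (automatic from the original equation but needing a remark), that $\arcsin$ is invoked on a monotone interval (ensured by the explicit range of $y$ on each subregion), and that the degenerate loci (the interface $\{x+y-z \in \{0,2\pi\}\}$, the corner $|z-x|=2\pi$, and the zeros of $\cos((x+z)/4)$) are recovered by continuity from the generic computation.
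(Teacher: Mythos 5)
Your treatment of $Z_+$ and of the \emph{necessity} direction for $Z_-$ follows the paper's route exactly: the same two sum-to-product factorizations, the same reduction to $\sin\left(\frac{x+2y-z}{4}\right)=\cos\left(\frac{x+z}{4}\right)\tan\left(\frac{z-x}{4}\right)$, and the same inversion of the sine on the monotone interval, including the $2\pi$-shift bookkeeping when $x>z$; that part is correct.

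The gap is the \emph{converse} inclusion for $Z_-$, which the lemma does assert (``consists of'' is a set equality): you never show that for every $x<z$ the point $(x,h(x,z),z)$ actually lies in $Z$ with $x+y-z\le 0$ (and similarly for $x>z$). Your only remark in this direction --- that the right-hand side lies in $[-1,1]$ ``wherever a solution exists'' --- is circular for existence, since it presupposes the zero you are trying to produce. What is needed is the unconditional inequality $\left|\cos\left(\frac{x+z}{4}\right)\right|\le\cos\left(\frac{x-z}{4}\right)$ on $[0,2\pi]^2$, equivalently $\left|\tan\left(\frac{z-x}{4}\right)\cos\left(\frac{x+z}{4}\right)\right|\le 1$ together with the sharper consequence $\left|\tan\left(\frac{z-x}{4}\right)\cos\left(\frac{x+z}{4}\right)\right|\le\sin\left(\frac{|z-x|}{4}\right)$. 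This does two jobs at once: it makes $h(x,z)$ well-defined for \emph{all} $(x,z)$ (the $\arcsin$ receives an argument in $[-1,1]$), and it gives $2\left|\arcsin(\cdot)\right|\le\frac{|z-x|}{2}$, i.e. $h(x,z)\in[0,z-x]$ when $x<z$, so the candidate point sits in the region where your reduced scalar equation is equivalent to $\Omega=0$, whence $\Omega(x,h(x,z),z)=0$. The paper secures this as its very first step (from $0\le\left|\frac{x-z}{4}\right|\le\frac{x+z}{4}\le\pi$ and $0\le\left|\frac{x-z}{4}\right|\le\pi-\frac{x+z}{4}\le\pi$), and independently proves existence by checking that $\Omega(x,\cdot,z)$ changes sign between $y=0$ and $y=z-x$ (resp.\ between $y=2\pi+z-x$ and $y=2\pi$) and invoking the intermediate value theorem. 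One of these two arguments must be added; as written, you have only proved $Z_-\subseteq\{y=h(x,z)\ \text{or}\ y=h(x,z)+2\pi\}$, not the stated equality.
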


\begin{proof} \underline{A basic inequality.} We start by proving that
\begin{equation}
\label{boundtan}
\left| \tan \left( \frac{z-x}{4} \right) \cos \left( \frac{x+z}{4} \right) \right| \leq 1 \quad \mbox{if $(x,z) \in [0,2\pi]^2$}.
\end{equation}
This inequality is a consequence of
$$
\left| \cos \left( \frac{x+z}{4} \right) \right| \leq \cos \left( \frac{x-z}{4} \right) \quad \mbox{if $(x,z) \in [0,2\pi]^2$}.
$$
To check the latter inequality, we observe that, on the one hand, $0 \leq \left|\frac{x-z}{4} \right| \leq \frac{x+z}{4} \leq \pi$, which implies that 
\begin{equation}
\label{star2}
\cos \left( \frac{x+z}{4} \right) \leq \cos \left( \frac{x-z}{4} \right).
\end{equation}
On the other hand, $0 \leq \left|\frac{x-z}{4} \right| \leq \frac{4 \pi - x - z}{4} \leq \pi$, which implies that 
\begin{equation}
    \label{star3}
\cos \left( \pi - \frac{x+z}{4} \right) = -\cos \left( \frac{x+z}{4} \right) \leq \cos\left( \frac{x-z}{4} \right).
\end{equation}

\medskip

\noindent 
\underline{Case 1: $0 \leq x + y - z \leq 2\pi $.} Then $\Omega$ can be written
$$
\Omega(x,y,z) = \sin\left( \frac x2 \right) + \sin \left( \frac y2 \right) - \sin \left( \frac z2 \right) - \sin \left( \frac{x+y-z}{2} \right)
$$
which becomes after using the trigonometric sum-to-product formulas
$$
\Omega(x,y,z) = 4 \sin \left( \frac{x+y}{4} \right)\sin \left( \frac{x-z}{4} \right)\sin \left( \frac{y-z}{4} \right).
$$
The description of $Z_+$ follows immediately from this formula.

\medskip

\noindent \underline{Case 2: $x+ y -z \leq 0$ or $x+ y - z \geq 2\pi$}. The formula for $\Omega$ is now
$$
\Omega(x,y,z) = \sin\left( \frac x2 \right) + \sin \left( \frac y2 \right) - \sin \left( \frac z2 \right) + \sin \left( \frac{x+y-z}{2} \right)
$$
which reduces, after using the trigonometric sum-to-product formulas, to
\begin{equation}
\label{formulaomega}
\Omega(x,y,z) = 2 \sin \left( \frac{x-z}{4} \right) \cos \left( \frac{x+z}{4} \right) + 2 \cos \left( \frac{x-z}{4} \right) \sin \left( \frac{x + 2y-z}{4} \right).
\end{equation}

\medskip

\noindent \underline{Case 2.1: $x+y-z \leq 0$.} Note that this implies that
\begin{equation}
\label{hedgehog}
-\frac \pi 2 \leq \frac{x-z}{4} \leq 0 \qquad \mbox{and} \qquad
-\frac{\pi}{2} \leq \frac{x+2y-z}{4} \leq \frac{\pi}{2}.
\end{equation}
Using the formula~\eqref{formulaomega}, $(x,y,z)$ is a zero of $\Omega$ if and only if
\begin{equation}
\label{omegazero}
 \sin \left( \frac{x + 2y-z}{4} \right) = \tan \left( \frac{z-x}{4} \right) \cos \left( \frac{x+z}{4} \right).
\end{equation}
With the help of~\eqref{boundtan} and~\eqref{hedgehog}, the sin function can be inverted to get
$$
\frac{x + 2y-z}{4} = \arcsin \left[ \tan \left( \frac{z-x}{4} \right) \cos \left( \frac{x+z}{4} \right) \right],
$$
which can also be written as $y = h(x,z)$. This shows that any solution of $\Omega(x,y,z) = 0$ with $x+y-z \leq 0$ is of the form $y = h(x,z)$.

Conversely, if $x \leq z$, we want to check that there exists a solution (in $y$) of $\Omega(x,y,z) = 0$ with $x+y-z \leq 0$. Thus, $y$ is restricted to belong to satisfy this inequality, and to belong to $[0,2\pi]$, or in other words, $0 \leq y \leq z-x$. But $\Omega$ changes sign between the two endpoints: indeed, by \eqref{star2} \eqref{star3} \eqref{hedgehog},
\begin{align*}
& \Omega(x,y=0,z) = 2 \sin \left( \frac{x-z}{4} \right) \left[ \cos \left( \frac{x+z}{4} \right) + \cos \left( \frac{x-z}{4} \right) \right] \leq 0 \\
& \Omega(x,y=z-x,z) = 2 \sin \left( \frac{x-z}{4} \right) \left[ \cos \left( \frac{x+z}{4} \right) - \cos \left( \frac{x-z}{4} \right) \right] \geq 0.
\end{align*}
Thus, $y \mapsto \Omega(x,y,z)$ has at least a zero, which was the desired statement.

 \medskip

\noindent \underline{Case 2.2: $x+y-z \geq 2\pi$.} This implies that 
$$
0 \leq \frac{x-z}{4} \leq \frac{\pi} 2 \qquad \mbox{and} \qquad
\frac{\pi}{2} \leq \frac{x+2y-z}{4} \leq \frac{3\pi}{2}.
$$ 
As a consequence, inverting the sin function in~\eqref{omegazero} with the help of~\eqref{boundtan} gives
$$
\frac{x + 2y-z}{4}  = \pi - \arcsin \left[ \tan \left( \frac{z-x}{4} \right) \cos \left( \frac{x+z}{4} \right) \right],
$$ 
which is equivalent to $y = h(x,z) + 2\pi$.

Conversely, if $x \geq z$, we need to check that there exists a solution (in $y$) of $\Omega(x,y,z) = 0$ with $x+y-z \geq 2\pi$. Besides this inequality, the variable $y$ is constrained to belong to $[0,2\pi]$; in other words, $y$ ranges in $[2\pi + z - x, 2\pi]$. There remains to check that $\Omega$ changes sign between these two endpoints; this is the case since
\begin{align*}
& \Omega(x,y=2\pi,z) = 2 \sin \left( \frac{x-z}{4} \right)  \left[ \cos \left( \frac{x+z}{4} \right) - \cos \left( \frac{x-z}{4} \right) \right] \leq 0 \\
& \Omega(x,y=2\pi+z-x,z)= 2 \sin \left( \frac{x-z}{4} \right) \left[ \cos \left( \frac{x+z}{4} \right) + \cos \left( \frac{x-z}{4} \right) \right] \geq 0.
\end{align*}
\end{proof}

\begin{lemma}[Integration on the resonant manifold with $p_2$ as integration variable]
\label{lemmaintegrationp2}
For a test function $\varphi$,
$$
\int_{\mathbb{T}^2} \delta( \Omega(x,y,z)) \varphi(x,y,z) \,dy \,dz 
= \int_0^{2\pi} \varphi(x,h(x,z),z) \frac{2 \dd z}{\sqrt{F_+(x,z)}},
$$
(considering $\varphi$ as periodic in $y,z$) where
$$
F_+(x,z) = \left[ \cos \left(\frac{x}{2} \right) + \cos \left(\frac{z}{2} \right) \right]^2 + 4 \sin \left(\frac{x}{2} \right) \sin \left(\frac{z}{2} \right).
$$
\end{lemma}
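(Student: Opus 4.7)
The plan is to integrate out the Dirac delta in the $y$-variable first, then integrate over $z$, using the standard coarea formula
$$
\int_0^{2\pi} \delta(\Omega(x,y,z))\, \varphi(x,y,z)\, dy = \sum_{y_\ast} \frac{\varphi(x, y_\ast, z)}{|\partial_y \Omega(x, y_\ast, z)|},
$$
the sum running over the simple zeros of $y \mapsto \Omega(x, y, z)$. By Lemma \ref{lemmaparameterization}, for fixed $(x, z)$ the relevant non-trivial zero from the $Z_-$ branch is $y_\ast = h(x, z) \bmod 2\pi$; trivial zeros (where $\{p_0, p_1\} = \{p_2, p_3\}$) are discarded in accordance with the regularization convention recalled in the introduction.

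The main task is to compute $|\partial_y \Omega|$ at $y_\ast$. I would start from the factored expression \eqref{formulaomega} derived in the proof of Lemma \ref{lemmaparameterization}, valid on $Z_-$:
$$
\Omega(x,y,z) = 2\sin\!\left(\tfrac{x-z}{4}\right)\cos\!\left(\tfrac{x+z}{4}\right) + 2\cos\!\left(\tfrac{x-z}{4}\right)\sin\!\left(\tfrac{x+2y-z}{4}\right).
$$
Differentiating gives $\partial_y \Omega = \cos((x-z)/4)\cos((x+2y-z)/4)$. The defining equation of $h$ yields $\sin((x+2y-z)/4) = \tan(|x-z|/4)\cos((x+z)/4)$ at $y = h(x,z)$, so by $\sin^2 + \cos^2 = 1$ (taking the positive branch of $\arcsin$),
$$
|\partial_y \Omega(x, h(x,z), z)|^2 = \cos^2\!\left(\tfrac{x-z}{4}\right) - \sin^2\!\left(\tfrac{x-z}{4}\right)\cos^2\!\left(\tfrac{x+z}{4}\right).
$$

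The algebraic heart of the argument is then the identity $4|\partial_y \Omega|^2 = F_+(x, z)$. Using the sum-to-product formulas $\cos(x/2) + \cos(z/2) = 2\cos((x+z)/4)\cos((x-z)/4)$ together with $4\sin(x/2)\sin(z/2) = 2[\cos((x-z)/2) - \cos((x+z)/2)] = 4[\sin^2((x+z)/4) - \sin^2((x-z)/4)]$, and then applying $\sin^2 + \cos^2 = 1$, one simplifies $F_+/4$ to exactly the right-hand side of the previous display. Substituting $|\partial_y \Omega| = \tfrac{1}{2}\sqrt{F_+(x, z)}$ into the coarea formula and integrating over $z \in [0, 2\pi]$ produces the claimed equality. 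The only genuine obstacle is the trigonometric identity $4|\partial_y \Omega|^2 = F_+$; the rest is a direct application of the distributional change of variables, with the bookkeeping caveat that trivial zeros are discarded.
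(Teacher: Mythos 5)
Your argument is correct and follows essentially the same route as the paper: integrate out the delta in $y$ at the zero $y=h(x,z)$, compute $\partial_y\Omega = \cos\bigl(\tfrac{x+2y-z}{4}\bigr)\cos\bigl(\tfrac{x-z}{4}\bigr)$, and use the defining relation of $h$ plus sum-to-product identities to show $|\partial_y\Omega| = \tfrac12\sqrt{F_+(x,z)}$. The only cosmetic difference is that you expand $F_+$ to match $4|\partial_y\Omega|^2$ while the paper expands the squared derivative toward $F_+/4$, and your explicit remark about discarding trivial zeros matches the paper's standing regularization convention.
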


\begin{proof}
Since $h(x,z)$ is the unique zero (in $y$) of $\Omega(x,y,z)$, 
$$
\delta(\Omega(x,y,z)) \dd y = \frac{1}{|\partial_y \Omega(x,h(x,z),z)|} \delta(y - h(x,z)).
$$
The derivative of $\Omega$ is
$$
\partial_y \Omega(x,y,z) = \frac{1}{2} \left[ \cos \left( \frac{y}{2} \right) + \cos \left( \frac{x+y-z}{2} \right) \right] =  \cos \left( \frac{x+2y-z}{4} \right)  \cos \left( \frac{x-z}{4} \right).
$$
Evaluating this function at $y = h(x,z)$ and using the definition of $h$, we find
\begin{align*}
| \partial_y \Omega(x,h(x,z),z) | & = \cos \left( \frac{x-z}{4} \right) \sqrt{ 1 - \sin \left( \frac{x+2h-z}{4} \right)^2 } \\
& =  \cos \left( \frac{x-z}{4} \right)  \sqrt{ 1 - \tan \left( \frac{x-z}{4} \right)^2 \cos \left( \frac{x+z}{4} \right)^2 } \\
& = \sqrt{ \cos \left( \frac{x-z}{4} \right)^2 - \sin \left( \frac{x-z}{4} \right)^2 \cos \left( \frac{x+z}{4} \right)^2 }.
\end{align*}
There remains to check that
$$
\cos \left( \frac{x-z}{4} \right)^2 - \sin \left( \frac{x-z}{4} \right)^2 \cos \left( \frac{x+z}{4} \right)^2 = \frac 14 \left[ \cos \left(\frac{x}{2} \right) + \cos \left(\frac{z}{2} \right) \right]^2 + \sin \left(\frac{x}{2} \right) \sin \left(\frac{z}{2} \right).
$$
This can be seen as follows: starting from the expression on the left-hand side, use the formulas $2 \cos^2 x = 1 + \cos(2x)$ and $2 \sin^2 x = 1 - \cos(2x)$ and then expand the resulting expression using that $\cos(a+b) = \cos a \cos b - \sin a \sin b$.
\end{proof}

\begin{lemma}[Integration on the resonant manifold with $p_1$ as integration variable]
\label{lemmaintegrationp1}
For a test function $\varphi$, periodic in $\mathbb{T}^2$, 
$$
\int_{\mathbb{T}^2} \delta( \Omega(x,y,z)) \varphi(x,y) \, \dd y \, \dd z 
= 2 \int \varphi(x,y) \frac{\mathbb{1}_{F_-(x,y)>0}}{\sqrt{F_-(x,y)}} \dd y,
$$
where
$$
F_-(x,z) = \left[ \cos \left(\frac{x}{2} \right) + \cos \left(\frac{z}{2} \right) \right]^2 - 4 \sin \left(\frac{x}{2} \right) \sin \left(\frac{z}{2} \right).
$$
\end{lemma}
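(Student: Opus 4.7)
The plan is to mirror the proof of Lemma \ref{lemmaintegrationp2}, integrating out $z$ instead of $y$. For fixed $(x, y)$, I identify the non-trivial zeros of $z \mapsto \Omega(x, y, z)$, compute $|\partial_z \Omega|$ at each such zero, and apply the standard rule $\int \delta(f(z)) \, dz = \sum_{z^\ast} 1/|f'(z^\ast)|$ to evaluate the $z$-integral.

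From the case analysis in the proof of Lemma \ref{lemmaparameterization}, case 2.1 (where $x + y - z < 0$) yields the non-trivial zero condition
\begin{equation*}
\sin\!\left(\frac{2z - x - y}{4}\right) = \tan\!\left(\frac{x+y}{4}\right)\cos\!\left(\frac{x-y}{4}\right),
\end{equation*}
while case 2.2 ($x + y - z > 2\pi$) gives an analogous relation. By an argument in the same spirit as the derivation of~\eqref{boundtan}, the right-hand side has absolute value at most one precisely when $F_-(x, y) \geq 0$: this is the solvability condition for non-trivial zeros and produces the indicator $\mathbf{1}_{F_- > 0}$.

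Differentiating --- with attention to the sign flip in $\partial_z \omega_3 = \partial_z |\sin((x+y-z)/2)|$ caused by $\sin((x+y-z)/2)$ being negative in case 2.1 --- and applying a sum-to-product identity,
\begin{equation*}
\partial_z \Omega = -\tfrac{1}{2}\bigl[\cos(z/2) + \cos((x+y-z)/2)\bigr] = -\cos\!\left(\frac{2z-x-y}{4}\right) \cos\!\left(\frac{x+y}{4}\right).
\end{equation*}
Substituting the zero condition above yields
\begin{equation*}
|\partial_z \Omega|^2 = \cos^2\!\left(\frac{x+y}{4}\right) - \sin^2\!\left(\frac{x+y}{4}\right)\cos^2\!\left(\frac{x-y}{4}\right).
\end{equation*}
The crucial algebraic identity is that $4$ times this quantity equals $F_-(x, y)$; in fact, this is exactly the identity verified in the proof of Lemma \ref{lemmaintegrationp2} under the substitution $z \mapsto -y$, using that $\cos$ is even, $\sin$ is odd, and hence $F_+(x, -y) = F_-(x, y)$. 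Consequently $|\partial_z \Omega| = \sqrt{F_-(x, y)}/2$ at each non-trivial zero, giving the contribution $2/\sqrt{F_-(x, y)}$ per zero.

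Since $\varphi$ is independent of $z$, the non-trivial $z$-zeros, which are related to one another by the $p_2 \leftrightarrow p_3$ symmetry (namely the swap $z \leftrightarrow x + y - z \bmod 2\pi$ that preserves the resonance), contribute equally, combining to give the stated coefficient of $2$. The main technical hurdle will be the careful sign bookkeeping when differentiating the absolute value $|\sin((x+y-z)/2)|$ across the two subcases of Lemma \ref{lemmaparameterization}, together with ensuring that trivial zeros at $z = x$ or $z = y$ --- which by the convention discussed in Section 1.3 are to be ignored --- do not corrupt the count of non-trivial contributions.
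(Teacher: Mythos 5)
Your plan reproduces the computational core of the paper's argument, just organized differently: the paper starts from Lemma \ref{lemmaintegrationp2} and performs the change of variables $y=h(x,z)$ in the remaining $z$-integral, whereas you integrate the $\delta$ directly in $z$ at fixed $(x,y)$. Both routes rest on the same three facts, which you have correctly: the zero relation $\sin\bigl(\tfrac{2z-x-y}{4}\bigr)=\tan\bigl(\tfrac{x+y}{4}\bigr)\cos\bigl(\tfrac{x-y}{4}\bigr)$ on the non-trivial branch, the equivalence of its solvability with $F_-(x,y)\geq 0$, and the Jacobian identity $|\partial_z\Omega|=\tfrac12\sqrt{F_-(x,y)}$ at the roots; your way of getting the last identity from the Lemma \ref{lemmaintegrationp2} computation via $F_+(x,-y)=F_-(x,y)$ is correct and is exactly the content of \eqref{eq:Jac1}.

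The genuine defect is the final multiplicity bookkeeping, which as written does not add up. You correctly find that each non-degenerate root contributes $1/|\partial_z\Omega|=2/\sqrt{F_-(x,y)}$, and you correctly note that there are \emph{two} roots $z_\pm$ (the paper's $\tilde h_\sigma$, exchanged by the $p_2\leftrightarrow p_3$ symmetry $z\mapsto x+y-z \bmod 2\pi$); but then summing the two equal contributions gives $4/\sqrt{F_-(x,y)}$, not the claimed $2/\sqrt{F_-(x,y)}$ — "contribute equally, combining to give the stated coefficient of $2$" is internally inconsistent. To land on the constant in the statement you must either count only one representative of each $p_2\leftrightarrow p_3$-symmetric pair of roots and say so explicitly, or concede that the constant is conventional. (To be fair, the paper is itself loose here: its proof exhibits both roots $z_\sigma$ yet matches constants as if there were one, and in \eqref{chardonneret} even the factor $2$ from Lemma \ref{lemmaintegrationp2} is dropped; nothing downstream depends on this factor, since all later estimates are up to constants — but a proof of the stated exact identity should make the counting coherent.) A smaller point: the indicator requires not just the necessary condition $|\tan(\tfrac{x+y}{4})\cos(\tfrac{x-y}{4})|\leq 1\Leftrightarrow F_-\geq 0$ (your "same spirit as \eqref{boundtan}" remark), but also the converse — that when $F_-(x,y)>0$ the two candidate roots genuinely lie in $[0,2\pi]$ and in the sector $x+y-z\leq 0$ or $x+y-z\geq 2\pi$, i.e.\ in $Z_-$; the paper settles this by writing $z_\sigma$ explicitly, and your sketch should include that verification.
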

\begin{proof} 
Given Lemma \ref{lemmaintegrationp2}, it suffices to show that 
$$ \int_{[0,2\pi]} \varphi(x,y) \frac{ \mathbb{1}_{F_-(x,y)>0}}{\sqrt{F_-(x,y)}} \dd y  =  
\int_{[0,2\pi]} 
\varphi(x,h(x,z)) \frac{ \dd z }{\sqrt{F_+(x,z)}} $$
 if one of these integrals is absolutely convergent. 
We basically want to perform a change of variables for fixed $x$, to $y=h(x,z)$. 
We first notice that from Lemma \ref{lemmaintegrationp2} and due to the relation
$$ \partial_2 \Omega\vert_{Z_-}(x, h(x,z),z) \partial_z h(x,z) +\partial_z \Omega\vert_{Z_-}(x, h(x,z),z) =0, $$
we get 
$$ \vert \partial_z \Omega\vert_{Z_-}(x, h(x,z),z) \vert  = \vert \partial_z h(x,z) \vert \frac{1}{2} \sqrt{F_+(x,z)}. $$
We will now show that for $z$ so that $h(x,\cdot)$ is locally invertible, it holds that
$$\vert \partial_z \Omega\vert_{Z_-}(x, y,z) \vert  = \frac{1}{2}\sqrt{F_-(x,y)}$$
which finishes the claim. We are using the following ingredients:
\begin{itemize}
\item the fact that in order to construct all possible local inverse functions $\tilde{h}(y,x)$, of $h(x,\cdot)$, it is enough to find all $z$'s so that (for fixed $x,y$), it holds $(x,y,z) \in Z_-$ and $\Omega\vert_{Z_-} =0$. 
\item We then use the formula for $\Omega\vert_{Z_-}$: 
$$\Omega\vert_{Z_-} (x,y,z) = 2 \left( \sin \left(\frac{x+y}{4}\right)\cos \left(\frac{x-y}{4}\right) + \cos \left(\frac{x+y}{4}\right) \sin \left(\frac{x+y-2z}{4}\right) \right), $$
which yields after manipulations of trigonometric functions that $F_-(x,y)\geq 0$ is a necessary condition in order to have $\Omega\vert_{Z_-}=0$. Indeed one observes that if $\Omega\vert_{Z_-}=0$, then 
\begin{align} \label{eq:sol to Omega_- relation}
\sin \left( \frac{x+y-2z}{4} \right) = -\tan \left( \frac{x+y}{4}  \right)\cos \left( \frac{x-y}{4}  \right), 
\end{align}
which is valid (for $x,y,z \in \mathbb{R}$) only if $\frac{1}{4}F_-(x,y)\geq 0$,
{{since $F_-(x,y)\ge 0$ is equivalent to $1 \ge \cos^2 \left ( \frac{x-y}{4}\right ) \tan^2 \left ( \frac{x+y}{4} \right )$,}}.
\item Whenever $F_- >0$, there are exactly two solutions in $Z_-$ to the energy problem, explicitly given by $$z_\sigma= \tilde{h}_\sigma(y,x) = \frac{x+y}{2} + 2 \sigma \arcsin \left( \tan \left(\frac{x+y}{4} \right)\cos\left( \frac{x-y}{4}  \right) \right) + 2\pi \mathbb{1}_{\sigma=-1} (-1)^{\mathbb{1}_{x+y>2\pi}}  \ ,\ \sigma \in \{\pm 1\}
.$$
These also satisfy \eqref{eq:sol to Omega_- relation}.
\end{itemize}
Now we may compute the Jacobian of the change of variables in the class of these $z$ where explicit calculations yield
\begin{equation} \label{eq:Jac1}
\begin{split} 
\left\vert \partial_z \Omega_- \right\vert = \left\vert 
\cos\left(\frac{x+y}{4}\right)  \right\vert 
\left\vert 
\cos\left(\frac{x+y-2z}{4}\right)  \right\vert 
 & =  
\left\vert 
\cos\left(\frac{x+y}{4}\right)  \right\vert
\left\vert \sqrt{1-\sin^2\left(\frac{x+y-2z}{4}  \right)}
\right\vert   \\
&  = \frac{1}{2} \sqrt{F_-(x,y)}.
\end{split}
\end{equation}
Also note that for fixed $x$, there are at most two $y$'s that satisfy $F_-=0$, so the calculation in \eqref{eq:Jac1} holds up to a finite number of $y$'s.
\end{proof}

\begin{lemma} [Vanishing rate of $F_-$]
\label{lemmazerosF-} For fixed $x \in (0,2\pi)$, the function $y \mapsto F_-(x,y)$ has two zeros, $y' < y''$ such that
$$
0 < y' < 2\pi- x < y'' < 2\pi.
$$
It is negative between these zeros and furthermore,
\begin{equation} \label{eq:lowerboundon F_}
\begin{cases}
&F_-(x,y) \gtrsim (y'-y)\operatorname{sin} \left( \frac{x}{2}\right),\ \text{ on } [0, y')\\
&F_-(x,y) \gtrsim (y-y'') \operatorname{sin} \left( \frac{x}{2}\right),\ \text{ on } (y'', 2\pi]
\end{cases}
\end{equation}
\end{lemma}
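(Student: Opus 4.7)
My plan rests on a factorization of $F_-$. Using the product-to-sum identities $\cos(x/2)+\cos(y/2)=2\cos\alpha\cos\beta$ and $\sin(x/2)\sin(y/2)=\sin^2\alpha-\sin^2\beta$, where $\alpha:=(x+y)/4$ and $\beta:=(x-y)/4$, one rewrites
\[
F_-(x,y)=4\bigl[\cos^2\alpha-\sin^2\alpha\cos^2\beta\bigr]=4\,A(x,y)\,B(x,y),
\]
with $A:=\cos\alpha-\sin\alpha\cos\beta$ and $B:=\cos\alpha+\sin\alpha\cos\beta$. A direct computation gives $\partial_y A=-\tfrac14(\sin\alpha+\cos(y/2))$ and $\partial_y B=-\tfrac14(\sin\alpha-\cos(y/2))$; each vanishes at exactly one point in $(0,2\pi)$, so $A(x,\cdot)$ and $B(x,\cdot)$ are unimodal in $y$. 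Combined with the endpoint values $A(x,0),\,B(x,0)>0$, $A(x,2\pi-x)=-\sin(x/2)<0<\sin(x/2)=B(x,2\pi-x)$, and $A(x,2\pi),\,B(x,2\pi)<0$, the intermediate value theorem yields a unique zero $y'\in(0,2\pi-x)$ of $A$ and a unique zero $y''\in(2\pi-x,2\pi)$ of $B$. This establishes the first part of the lemma and the stated sign pattern of $F_-=4AB$.

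For the lower bound on $[0,y')$, the crucial identity is $B(x,y')=\sin(x/2)+\sin(y'/2)$: substituting $A(x,y')=0$ into $B(x,y')=2\sin\alpha\cos\beta$ and applying sum-to-product in reverse gives $B(x,y')=\sin(\alpha+\beta)+\sin(\alpha-\beta)=\sin(x/2)+\sin(y'/2)\geq\sin(x/2)$. Moreover $B(x,0)=\cos(x/4)(1+\sin(x/4))\geq\sin(x/2)$ by the elementary inequality $(1+\sin(x/4))/(2\sin(x/4))\geq 1$, and since $B$ is unimodal on $(0,y'')$ with endpoint values both $\geq\sin(x/2)$, this bound propagates to $B(x,y)\geq\sin(x/2)$ on the whole interval $[0,y']$. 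I then apply the mean value theorem to $F_-$ at $y'$: for any $y\in[0,y')$ there is $\eta\in(y,y')$ with $F_-(x,y)=(y'-y)\bigl[-\partial_y F_-(x,\eta)\bigr]$, so the desired estimate reduces to the uniform bound $-\partial_y F_-(x,\eta)\gtrsim\sin(x/2)$ on $\eta\in[0,y']$. Using the factorization,
\[
-\partial_y F_-(x,y)=2\sin\alpha\bigl[\cos\alpha+\cos(y/2)\cos\beta\bigr],
\]
which equals $2\sin(x/2)$ at $y=0$ and $4\,B(x,y')\,|\partial_y A(x,y')|$ at $y=y'$; the latter is controlled via the identity for $B(y')$ together with an explicit expansion of $|\partial_y A(y')|=\tfrac14[\sin\alpha+\cos(y'/2)]$. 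The analogous bound on $(y'',2\pi]$ follows by the mirror argument with $A$ and $B$ interchanged: the identity $|A(x,y'')|=\sin(x/2)+\sin(y''/2)\geq\sin(x/2)$, combined with MVT applied to $F_-$ at $y''$, yields $F_-(x,y)\gtrsim(y-y'')\sin(x/2)$.

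The main technical obstacle is the uniform lower bound $-\partial_y F_-(x,\eta)\gtrsim\sin(x/2)$ in the degenerate regimes $x\to 0^+$ (where $y'\to 2\pi$ and $|\partial_y A(x,y')|$ vanishes at rate $x^{2/3}$) and $x\to 2\pi^-$ (where $y'\to 0$). In both limits a delicate cancellation ensures $B(x,y')\cdot|\partial_y A(x,y')|\sim\sin(x/2)$: the smallness of one factor is compensated by the largeness of the other, so that the derivative $-\partial_y F_-(x,y')=4B(x,y')|\partial_y A(x,y')|$ scales correctly. Making these asymptotic cancellations explicit near both edges of the $y'$-range, and interpolating between the two endpoint values of $-\partial_y F_-$ to control intermediate $\eta$ (by a case split according to whether $\eta$ lies near $0$, near $y'$, or in the bulk), constitutes the technical heart of the proof.
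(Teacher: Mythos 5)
Your qualitative analysis is correct and is a genuinely different (and arguably cleaner) route than the paper's: the factorization $F_-=4AB$ with $A=\cos\alpha-\sin\alpha\cos\beta$, $B=\cos\alpha+\sin\alpha\cos\beta$, the endpoint signs, and the unimodality of $A(x,\cdot)$ and $B(x,\cdot)$ do yield the two zeros $y'<2\pi-x<y''$ and the sign pattern, whereas the paper gets this by studying $\partial_yF_-$ in the variable $u=\cos(y/2)$ through a cubic polynomial. The identities $B(x,y')=\sin(x/2)+\sin(y'/2)$ and $|A(x,y'')|=\sin(x/2)+\sin(y''/2)$ also check out and are nice observations.

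However, the quantitative part of the lemma --- the lower bounds $F_-(x,y)\gtrsim(y'-y)\sin(x/2)$ and $F_-(x,y)\gtrsim(y-y'')\sin(x/2)$ --- is not actually proved. Your mean value theorem reduction needs the uniform bound $-\partial_yF_-(x,\eta)\gtrsim\sin(x/2)$ for \emph{all} $\eta\in[0,y']$, and you only exhibit the two endpoint values $2\sin(x/2)$ at $\eta=0$ and $4\,B(x,y')\,|\partial_yA(x,y')|$ at $\eta=y'$; endpoint values do not control the interior minimum without a monotonicity or shape analysis of $\partial_yF_-$ on $[0,y']$ (this is exactly what the paper's $u$-substitution and the cubic $P(u)$ provide, and your "case split near $0$, near $y'$, in the bulk" is not an argument). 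More seriously, the endpoint bound at $y'$ itself, $B(x,y')\,|\partial_yA(x,y')|\gtrsim\sin(x/2)$, is the whole difficulty: your identity gives $B(x,y')\ge\sin(x/2)$, but as you yourself note $|\partial_yA(x,y')|$ degenerates as $x\to0$ (with $y'\to2\pi$, one has $|\partial_yA(x,y')|\sim x^{2/3}$, $B(x,y')\sim x^{1/3}$, $\sin(x/2)\sim x/2$), so the product has exactly the borderline size with no margin, and asserting that "a delicate cancellation ensures" the correct scaling is stating the conclusion, not proving it. This deferred "technical heart" is precisely the step the paper does not reprove either: it reduces, as you do, to the endpoint inequalities $F_2(x,y')\lesssim-\sin(x/2)$, $F_2(x,y'')\gtrsim\sin(x/2)$ and then cites Lukkarinen--Spohn, equations (4.12)--(4.17), for the "clever sequence of inequalities". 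So as written your proposal establishes the existence, location and sign of the zeros, but not the displayed lower bounds on $F_-$, which are the substance of the lemma.
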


\begin{proof} By symmetry of $F_-$ (namely, the identity $F_-(2\pi-x,2\pi-y) = F_-(x,y)$), it suffices to consider the case $0 \leq x \leq \pi$. We will denote 
$$
c = \cos \left( \frac x2 \right) \geq 0, \qquad s = \sin \left( \frac x2 \right) \geq 0.
$$

\smallskip

\noindent \underline{The derivatives of $F_-$.} Differentiating in $y$ gives
$$
F_2(x,y) = [\partial_y F_{{-}}](x,y) = - \sin \left( \frac y2 \right) \left( \cos \left( \frac x2 \right) + \cos \left( \frac y2 \right) \right) - 2 \sin \left( \frac x2 \right) \cos \left( \frac y2 \right).
$$
At this point, it is convenient to switch to the variable $u = \cos \left( \frac y 2 \right)$. Abusing notations, we write
$$
F_2(x,u) = F_2(x,y(u)) = - \sqrt{1 - u^2}(c+u) - 2 u \sqrt{1-c^2}.
$$
Taking further derivatives in $u$,
\begin{align*}
& \partial_u F_2(x,u) = \frac{2u^2+uc-1}{\sqrt{1-u^2}} - 2 \sqrt{1-c^2} \\
& \partial_u^2 F_2(x,u) = \frac{P(u)}{(1-u^2)^{\frac 32}}, \qquad P(u) = -2u^3+3u+c \\
& P'(u) = -6u^2 +3.
\end{align*}

\smallskip

\noindent \underline{Sign of $\partial_u^2 F_2(x,\cdot)$.} Since $P'$ has roots at $- 2^{-\frac 12}$ and $2^{-\frac 12}$, the former is a local minimum of $P$ and the latter a local maximum. Furthermore, $P(-1) = c-1 <0$, $P(0) = c>0$ and $P(1) = c+1 {>}0$. As a consequence, $P$ has a unique root $u_0<0$ in $[-1,1]$, and $P>0$ if and only if $u>u_0$; hence the same holds true for $\partial_u^2 F_2(x,\cdot)$.

\smallskip

\noindent \underline{Sign of $\partial_u F_2(x,\cdot)$.} By the previous paragraph, $\partial_u F_2(x,\cdot)$ is decreasing if $u<u_0$, and increasing if $u>u_0$. Furthermore, $\partial_u F_2(-1) = {{+}}\infty$, $\partial_u F_2(0) = -1-2s<0$ and $\partial_u F_2(1) = + \infty$. Therefore, $\partial_u F_2(x,\cdot)$ has two zeros, $u_-<0$ and $u_+>0$, and $\partial_u F_2(x,\cdot)>0$ if and only if $u<u_-$ or $u>u_+$.

\smallskip

\noindent \underline{Sign of $F_2(x,\cdot)$.} Coming back to the $y$ variable, we learn from the previous paragraph that $y\mapsto F_2(x,y)$ is increasing on $[y_-,y_+]$, with $y_-<\pi<y_+$, and decreasing otherwise ($y_{\pm}$ corresponds to $u_{\mp}$). Next, we note that $F_2(x,0) = -2 s <0$, $F_2(x,\pi) = -c <0$ and $F_2(x,2\pi) = 2 s >0$. Therefore, there exists $y_0>\pi$ such that $F_2(x,y)>0$ if and only if $y>y_0$.

\smallskip

\noindent \underline{Sign of $F_-(x,\cdot)$.} The previous paragraph tells us that $F_-(x,\cdot)$ is decreasing on $[0,y_0]$ and increasing on $[y_0,2\pi]$, with $y_0>\pi$. Since $F_-(x,0)=(1+c)^2>0$, $F_-(x,2\pi -x) = - 4s^2<0$ and $F_-(x,2\pi) = (c-1)^2 >0$, there exists $y'$ and $y''$ such that $F_-(x,\cdot) <0$ if and only if $y \in (y',y'')$ and furthermore $0<y'<2\pi-x<y''<2\pi$ and $y'<y_0<y''$.

\smallskip

\noindent \underline{Reformulation of the problem.} Writing 
$$
F_-(x,y) = 
\begin{cases}
- \int_y^{y'} F_2(x,z) \, dz & \mbox{if $y<y'$} \\
\int_{y''}^y F_2(x,z) \,dz & \mbox{if $y>y''$},
\end{cases}
$$
we see that it suffices to show that
$$
\begin{cases}
F_2(x,y) \lesssim -s & \mbox{if $y<y'$} \\
 F_2(x,y) \gtrsim s & \mbox{if $y>y''$}.
\end{cases}
$$
Since $F_2$ is increasing on $[y_-,y_+]$ and decreasing on $[y_+,2\pi]$ and since $y'<y_0<y''$,
$$
\mbox{on $[y'',2\pi]$}, \qquad F_2(x,y) \geq \min(F_2(x,y''),F_2(x,2\pi)) = \min(F_2(x,y''),2s).
$$
Similarly,
$$
\mbox{on $[0,y']$}, \qquad F_2(x,y) \leq \max(F_2(x,y'),F_2(x,0)) = \max(F_2(x,y'),-2s).
$$ 
By the two previous assertions, it suffices to show that
$$
F_2(x,y') \lesssim -s \quad \mbox{and} \quad F_2(x,y'') \gtrsim s.
$$

\smallskip

\noindent \underline{End of the proof.} We now refer to the argument in \cite{LukkarinenSpohn2008}, equations (4.12) to (4.17), a clever sequence of inequalities which we were not able to simplify.
\end{proof}

\subsection{Asymptotics of the collision frequency}

\begin{lemma}[Asymptotics of the collision frequency for zero mass]
\label{lemmaasymptoticszero}
If $\gamma=0$, $a(p)$ is a nonnegative continuous functions on $\mathbb{T}$ vanishing only at $0$. Furthermore,
$$
a(p) \sim \sin \left( \frac p 2 \right) ^{5/3} \qquad \mbox{if $p \in [0,2\pi]$}.
$$
\end{lemma}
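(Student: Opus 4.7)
The plan has two clean steps. First, since $\gamma = 0$ one has $\omega \mathfrak{f} = \beta^{-1}$ identically, so the three factors $\omega_i \mathfrak{f}_i$ in the integrand collapse and
$$a(p) \;=\; \frac{\omega(p)^2}{\beta^2}\, I(p), \qquad I(p) := \int_0^{2\pi} \frac{dp_2}{\sqrt{F_+(p, p_2)}}.$$
The lemma thus reduces to the two-sided bound $I(p) \sim \omega(p)^{-1/3}$ on $\mathbb{T}$. Strict positivity of $I$ on $(0, 2\pi)$ is immediate: $F_+$ is bounded above by a universal constant, whence $I \geq \pi/\sqrt{2}$. Moreover $F_+(p, p_2) > 0$ for $p \in (0, 2\pi)$ (as seen in the proof of Lemma \ref{lemmaparameterization}), so $I$ is continuous on $(0, 2\pi)$. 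Because $F_+(p, p_2) = F_+(2\pi - p, 2\pi - p_2)$, it is enough to establish the asymptotic as $p \to 0^+$.

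For this asymptotic, I would substitute $q = 2\pi - p_2$ and use the product-to-sum identity to rewrite
$$F_+(p, 2\pi - q) \;=\; 4 \sin^2\!\left(\tfrac{p+q}{4}\right) \sin^2\!\left(\tfrac{q-p}{4}\right) + 4 \sin(p/2) \sin(q/2).$$
This factored form makes the two-sided estimate
$$F_+(p, 2\pi - q) \;\sim\; (q^2 - p^2)^2 + pq \qquad (p, q \in [0, \pi])$$
transparent, both bounds following by comparing $\sin(x)$ to $x$ in each summand. The two terms balance at $q \sim p^{1/3}$, and the integrand is bounded for $q \in [\pi, 2\pi]$. Partitioning $q \in [0, \pi]$ into the three regimes $q \lesssim p$, $\; p \lesssim q \lesssim p^{1/3}$, and $p^{1/3} \lesssim q \lesssim \pi$ yields $F_+ \sim \max(p^4, pq)$, $F_+ \sim pq$, and $F_+ \sim q^4$ respectively. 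The corresponding integral contributions are of size $O(1)$, $\sim p^{-1/3}$, and $\sim p^{-1/3}$, summing to $I(p) \sim p^{-1/3} \sim \omega(p)^{-1/3}$.

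The main technical nuisance is maintaining matching upper and lower bounds on $F_+$ in each regime. The upper bounds are elementary Taylor expansions of each summand; the lower bounds follow from the factored form, at worst requiring one to check that the quartic contribution $q^4$ is dominated by the bilinear $pq$ in the middle regime, which amounts exactly to the defining inequality $q^3 \lesssim p$ of that regime. I do not anticipate a serious obstacle.
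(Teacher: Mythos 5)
Your proposal is correct and follows essentially the same route as the paper's proof: for $\gamma=0$ the weights collapse to give $a(p)\sim \omega(p)^2\int_0^{2\pi}F_+(p,p_2)^{-1/2}\,\dd p_2$, the symmetry $p\mapsto 2\pi-p$ plus positivity/continuity handle the bulk, and the endpoint asymptotic comes from the same three-regime analysis with crossover at $q\sim p^{1/3}$ giving $\int \sim p^{-1/3}$. The only difference is cosmetic: you expand $\cos(p/2)-\cos(q/2)$ by a product-to-sum identity to reach the model expression $(q^2-p^2)^2+pq$, whereas the paper substitutes $\epsilon=\sin(x/2)$, $s=\sin(z'/2)$ to reach the equivalent $(\epsilon^2-s^2)^2+4\epsilon s$.
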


\begin{proof}
It follows from the definition \eqref{chardonneret}, Lemma \eqref{lemmaparameterization} and Lemma \eqref{lemmaintegrationp2} that
$$
a(x) = \omega(x)^2 \int_0^{2\pi} \frac{2 \dd z}{\sqrt{ \left( \cos \left( \frac x 2 \right) + \cos \left( \frac z 2 \right)\right)^2 + 4 \sin \left( \frac x 2 \right) \sin \left( \frac z 2 \right)}}.
$$

First observe that $a(x) = a(2\pi -x)$, hence it suffices to consider the case $x \in [0,\pi]$. Second, as long as $x$ does not approach $0$, it is clear that $a(x) \sim 1$. Hence, it suffices 
to compute the equivalent of the integral as $x \to 0$. The contribution of this integral for $z < \frac{3\pi}2$ is bounded: indeed, the denominator is bounded from below since
$$
 \cos \left( \frac x 2 \right) + \cos \left( \frac z 2 \right) > \cos \left( \frac \pi 8 \right) - \cos \left( \frac \pi 4 \right) > 0 \quad \mbox{if $x \in (0,\frac \pi 4) \;\; \mbox{and} \;\; z \in (0,\frac{3\pi}2) $}.
 $$
Therefore, we restrict the integration variable to $(\frac{3\pi}2,2\pi)$ and then change variable to $z' = 2\pi - z$ to obtain the expresssion
$$
\int_0^{\frac \pi 2 } \frac{2 \dd z'}{\sqrt{ \left( \cos \left( \frac x 2 \right) - \cos \left( \frac {z'} 2 \right)\right)^2 + 4 \sin \left( \frac x 2 \right) \sin \left( \frac {z'} 2 \right)}}.
$$
Denoting $\epsilon = \sin \left( \frac x 2 \right)$ and $\sin  \left( \frac {z'} 2 \right)= s$, the expression in the square root in the denominator can now be written
\begin{align*}
& \left( \cos \left( \frac x 2 \right) - \cos \left( \frac {z'} 2 \right)\right)^2 + 4 \sin \left( \frac x 2 \right) \sin \left( \frac {z'} 2 \right) \\
& \qquad \qquad = \left( \frac{\sin \left( \frac x 2 \right)^2 - \sin \left( \frac {z'} 2 \right)^2}{\sqrt{1 - \sin \left( \frac x 2 \right)^2} + \sqrt{1 - \sin \left( \frac {z'} 2 \right)^2}} \right)^2 + 4 \sin \left( \frac x 2 \right) \sin \left( \frac {z'} 2 \right) \\
& \qquad \qquad = \left( \frac{\epsilon^2 - s^2 }{\sqrt{1-\epsilon^2} + \sqrt{1 -s^2 }} \right)^2+ 4 \epsilon s 
\end{align*}
The integral becomes
\begin{align*}
& \int_0^{\sqrt{2}/2} \left[ \left( \frac{\epsilon^2 - s^2 }{\sqrt{1-\epsilon^2} + \sqrt{1 -s^2 }} \right)^2+ 4 \epsilon s \right]^{-\frac 12} \frac{2 \dd s}{\sqrt{1 - s^2}} \\
 & \qquad \qquad \qquad \qquad \sim
  \int_0^{\sqrt{2}/2} \left[ \left( \epsilon^2 - s^2  \right)^2+ 4 \epsilon s \right]^{-\frac 12} \dd s \sim \epsilon^{-\frac 13}.
\end{align*}
Here, we used that
$$
\left( \epsilon^2 - s^2 \right)^2+ 4 \epsilon s 
\sim
\begin{cases}
 \epsilon^4 & \mbox{if $s < \epsilon^3$} \\
\epsilon s & \mbox{if $\epsilon^3 < s < \epsilon^{\frac 13} $} \\
s^4 & \mbox{if $s> \epsilon^{\frac 13}$} 
\end{cases}
$$
\end{proof}

\begin{lemma}
\label{lemmaomega2omega3}
For any $x,z \in [0,2\pi]$,
$$
\left|\sin \left( \frac{\underline{h}(x,z)}{2} \right) \sin \left( \frac{{h}(x,z)}{2} \right) \right| = \tan^2\left( \frac{z-x}{4} \right) \sin \left(\frac z 2 \right)\sin \left(\frac x 2 \right).
$$
\end{lemma}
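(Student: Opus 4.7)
The plan is to reduce both sides to a common trigonometric identity using a product-of-sines formula, exploiting the structural fact that $h/2$ and $\underline{h}/2$ differ only by the sign of a common term.

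First I introduce the shorthand $\alpha = \frac{z-x}{4}$ and $\beta = \frac{z+x}{4}$, so that $\frac{x}{2} = \beta - \alpha$ and $\frac{z}{2} = \beta + \alpha$. Assuming without loss of generality that $z \geq x$ (the other case is symmetric since the final identity only involves $\tan^2\frac{z-x}{4}$), the definitions of $h$ and $\underline{h} = x - z + h$ give
\begin{align*}
\tfrac{h(x,z)}{2} &= \alpha + \gamma, \qquad \tfrac{\underline{h}(x,z)}{2} = -\alpha + \gamma,
\end{align*}
where $\gamma = \arcsin(\tan\alpha \cos\beta)$, so that $\sin\gamma = \tan\alpha \cos\beta$.

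Next I use the identity $\sin(\gamma+\alpha)\sin(\gamma-\alpha) = \sin^2\gamma - \sin^2\alpha$ to compute
\begin{align*}
\sin\tfrac{h}{2}\sin\tfrac{\underline{h}}{2} = \sin^2\gamma - \sin^2\alpha = \tan^2\alpha \cos^2\beta - \sin^2\alpha = \tfrac{\sin^2\alpha}{\cos^2\alpha}\bigl(\cos^2\beta - \cos^2\alpha\bigr).
\end{align*}
The factor $\cos^2\beta - \cos^2\alpha$ is handled by the standard identity $\cos^2 A - \cos^2 B = -\sin(A+B)\sin(A-B)$, which combined with $\beta+\alpha = z/2$ and $\beta - \alpha = x/2$ gives $\cos^2\beta - \cos^2\alpha = -\sin\tfrac{z}{2}\sin\tfrac{x}{2}$. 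Therefore
\begin{align*}
\sin\tfrac{h}{2}\sin\tfrac{\underline{h}}{2} = -\tan^2\alpha \,\sin\tfrac{z}{2}\sin\tfrac{x}{2} = -\tan^2\tfrac{z-x}{4}\sin\tfrac{z}{2}\sin\tfrac{x}{2}.
\end{align*}
Taking absolute values and using that $\sin\tfrac{x}{2},\sin\tfrac{z}{2} \geq 0$ on $[0,2\pi]$ yields the claimed identity.

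There is no real obstacle here; the only thing to watch is the sign convention in the definition of $h$ (the $|z-x|$ inside the arcsin versus the $(z-x)/2$ outside). If $z < x$ one repeats the computation with $\alpha$ replaced by $|\alpha|$ and the same identity $\sin(\gamma+\alpha)\sin(\gamma-\alpha) = \sin^2\gamma - \sin^2\alpha$ goes through (the sign of $\alpha$ drops out because only $\alpha^2$ appears in the end); since the right-hand side of the proposed identity depends only on $(z-x)^2$ via $\tan^2\tfrac{z-x}{4}$, both cases yield the same formula.
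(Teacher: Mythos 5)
Your proof is correct and follows essentially the same route as the paper: write $h/2=\gamma+\alpha$ and $\underline{h}/2=\gamma-\alpha$ (mod $\pi$, harmless under the absolute value) with $\sin\gamma=\tan|\alpha|\cos\beta$, reduce the product of sines to $\sin^2\gamma-\sin^2\alpha$, and finish with the $\cos^2\beta-\cos^2\alpha=-\sin\frac z2\sin\frac x2$ identity. The only cosmetic difference is that you invoke $\sin(\gamma+\alpha)\sin(\gamma-\alpha)=\sin^2\gamma-\sin^2\alpha$ directly, whereas the paper expands each sine factor separately with a sign $\sigma=\sgn(z-x)$ before multiplying; the substance is identical.
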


\begin{proof}
Denoting
$$
g(x,z) = \arcsin \left( \tan\left( \frac{|z-x|}{4} \right) \cos \left( \frac{z+x}{4} \right) \right),
$$
we can write
\begin{align*}
\left| \sin \left( \frac{\underline{h}(x,z)}{2} \right) \right|
& = \left| \sin \left( g(x,z) - \frac{z-x}{4} \right) \right|
= \left| \sin \left( \frac{z-x}{4} \right) \right| \left|\sigma \cos \left( \frac{z+x}{4} \right) - \cos(g(x,z)) \right| \\
\left| \sin \left( \frac{{h}(x,z)}{2} \right) \right| & = \left| \sin \left( g(x,z) + \frac{z-x}{4} \right) \right|
= \left| \sin \left( \frac{z-x}{4} \right) \right| \left|\sigma \cos \left( \frac{z+x}{4} \right) + \cos(g(x,z)) \right|.
\end{align*}
where $\sigma = \sgn(z-x)$.

As a result,
\begin{align*}
& \left| \sin \left( \frac{\underline{h}(x,z)}{2} \right) \sin \left( \frac{{h}(x,z)}{2} \right) \right| =  \sin^2 \left( \frac{z-x}{4} \right) \left| \cos^2 \left( \frac{z+x}{4} \right) - \cos^2 (g(x,z)) \right| \\
& \qquad = 
\sin^2 \left( \frac{z-x}{4} \right) \left[ \cos^2 \left( \frac{z+x}{4} \right) - 1 + \tan^2\left( \frac{z-x}{4} \right)\cos^2 \left( \frac{z+x}{4} \right) \right] \\
&\qquad = \sin^2 \left( \frac{z-x}{4} \right)\frac{\cos^2 \left( \frac{z+x}{4} \right) - \cos^2 \left( \frac{z-x}{4} \right)}{\cos^2 \left( \frac{z-x}{4} \right)} =  \tan^2\left( \frac{z-x}{4} \right) \sin \left(\frac z 2 \right)\sin \left(\frac x 2 \right).
\end{align*}
\end{proof}

\begin{lemma}[Asymptotics of the collision frequency for non-zero mass]
\label{lemmaasymptoticsnonzero}
If $\gamma > 0$, $a(p)$ is a nonnegative continuous functions on $\mathbb{T}$ vanishing only at $0$. Furthermore,
$$
a(p) \sim \sin \left( \frac{p}{2} \right)^{5/3} \qquad \mbox{if $p \in [0,2\pi]$}.
$$
\end{lemma}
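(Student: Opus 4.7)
The plan is to exploit the fact that for $\gamma>0$ the Rayleigh-Jeans density $\mathfrak{f}(p)=1/(\beta\omega(p)+\gamma)$ is bounded above and below by positive constants, so $\mathfrak{f}\sim 1$ and $\mathfrak{f}_i\sim 1$ uniformly on $\mathbb{T}$. Substituting this into the formula for $a(p)$ in \eqref{chardonneret} gives
$$
a(p)\sim\omega(p)\int_0^{2\pi}\omega_1\omega_2\omega_3\,\frac{\dd p_2}{\sqrt{F_+(p,p_2)}},
$$
and then Lemma~\ref{lemmaomega2omega3} (applied with $x=p$, $z=p_2$, so that $h(x,z)=p_1$ and $\underline{h}(x,z)=p_3$) lets us replace $\omega_1\omega_3=\omega\,\omega_2\tan^2((p_2-p)/4)$, yielding
$$
a(p)\sim\omega(p)^2\int_0^{2\pi}\omega_2^2\,\tan^2\!\left(\frac{p_2-p}{4}\right)\frac{\dd p_2}{\sqrt{F_+(p,p_2)}}.
$$

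By the symmetry $(p,p_2)\mapsto(2\pi-p,2\pi-p_2)$ it is enough to treat $p\in[0,\pi]$, and away from $p=0$ both sides are bounded above and below so the claim is trivial. For $p$ small, the contribution from $p_2\in[0,3\pi/2]$ is uniformly bounded: exactly as in Lemma~\ref{lemmaasymptoticszero}, $\sqrt{F_+(p,p_2)}$ is bounded below there, and the remaining integrand is bounded. The singular contribution comes from $p_2\in[3\pi/2,2\pi]$. After the substitution $z'=2\pi-p_2$ and setting $\epsilon=\sin(p/2)$, $s=\sin(z'/2)$, the integral reduces (up to harmless smooth multiplicative factors and using the small-angle expansions already performed in Lemma~\ref{lemmaasymptoticszero}) to estimating
$$
J(\epsilon):=\int_0^{\sqrt{2}/2}\left[(\epsilon^2-s^2)^2+4\epsilon s\right]^{-\frac 12}\cdot\frac{s^2}{(s+\epsilon)^2}\,\dd s,
$$
where the new weight $s^2/(s+\epsilon)^2$ encodes the extra factor $\omega_2^2\tan^2((p_2-p)/4)$ (note $\tan((p_2-p)/4)=\cot((z'+p)/4)\sim 2/(s+\epsilon)$ in the small-angle regime).

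The lemma then reduces to $J(\epsilon)\sim\epsilon^{-1/3}$, which I would prove by the three-region split of Lemma~\ref{lemmaasymptoticszero}, using
$$
(\epsilon^2-s^2)^2+4\epsilon s\sim\begin{cases}\epsilon^4,&s<\epsilon^3,\\ \epsilon s,&\epsilon^3<s<\epsilon^{1/3},\\ s^4,&s>\epsilon^{1/3}.\end{cases}
$$
A direct computation shows that the regions $s<\epsilon^3$ and $s\in(\epsilon^3,\epsilon)$ each contribute $O(1)$ (using $s+\epsilon\sim\epsilon$ there), while the regions $s\in(\epsilon,\epsilon^{1/3})$ (where $\int(\epsilon s)^{-1/2}\dd s\sim\epsilon^{-1/3}$) and $s>\epsilon^{1/3}$ (where $\int s^{-2}\dd s\sim\epsilon^{-1/3}$) each contribute $\sim\epsilon^{-1/3}$, providing matching upper and lower bounds. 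The main technical obstacle is bookkeeping of the new length scale $s\sim\epsilon$ introduced by the weight $s^2/(s+\epsilon)^2$, which was absent in the zero-mass calculation, together with verifying that the small-angle approximations made in passing from the original integral to $J(\epsilon)$ are valid with constants uniform in $\epsilon\to 0$.
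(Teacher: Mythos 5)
Your proposal is correct and follows essentially the same route as the paper: it uses $\mathfrak{f}\sim 1$ for $\gamma>0$, rewrites $\omega_1\omega_3$ via Lemma~\ref{lemmaomega2omega3}, reduces by symmetry to small $p$, bounds the non-singular range of $p_2$, and evaluates the singular contribution near $p_2=2\pi$ in the variables $\epsilon=\sin(p/2)$, $s=\sin(z'/2)$ with the same regional splitting as in Lemma~\ref{lemmaasymptoticszero} (your weight $s^2/(s+\epsilon)^2$ is equivalent to the paper's $s^2/(\epsilon^2+s^2)$). The only difference is that you carry out explicitly the final four-region integral giving $J(\epsilon)\sim\epsilon^{-1/3}$, which the paper leaves as "combine with the proof of Lemma~\ref{lemmaasymptoticszero}"; your bookkeeping there is correct.
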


\begin{proof}
It follows from the definition \eqref{chardonneret}, Lemma \eqref{lemmaparameterization} and Lemma \eqref{lemmaintegrationp2} that
\begin{equation}\label{formulaa}
a(x) \sim \sin \left( \frac x 2 \right) \int_0^{2\pi} \frac{ \sin \left( \frac z 2 \right) \left| \sin \left( \frac {h(x,z)} 2 \right)  \sin \left( \frac {\underline{h}(x,z)} 2 \right) \right|}{\sqrt{ \left( \cos \left( \frac x 2 \right) + \cos \left( \frac z 2 \right)\right)^2 + 4 \sin \left( \frac x 2 \right) \sin \left( \frac z 2 \right)}} \dd z
\end{equation}

As in the previous lemma, it suffices to consider the case $x \to 0$.
By Lemma \ref{lemmaomega2omega3},
\begin{equation}
\label{formula1}
\left| \sin \left(\frac z 2 \right) \sin \left( \frac{\underline{h}(x,z)}{2} \right) \sin \left( \frac{{h}(x,z)}{2} \right) \right| = \tan^2 \left( \frac{z-x}{4} \right) \sin^2 \left(\frac z 2 \right)\sin \left(\frac x 2 \right).
\end{equation}
Setting $z' = 2\pi -z$, this becomes
\begin{equation}
\label{formula2}
\begin{split}
& \left| \sin \left(\frac z 2 \right) \sin \left( \frac{\underline{h}(x,z)}{2} \right) \sin \left( \frac{{h}(x,z)}{2} \right) \right| = \cot^2\left( \frac{z'+x}{4}\right) \sin^2 \left(\frac {z'} 2 \right)\sin \left(\frac x 2 \right)  \\
& \qquad \qquad = \frac{ \left[1+ \cos \left(\frac {z'} 2 \right) \cos \left(\frac x 2 \right)  - \sin \left(\frac {z'} 2 \right)\sin \left(\frac x 2 \right) \right] 
\sin^2 \left(\frac {z'} 2 \right)\sin \left(\frac x 2 \right) }{1 - \cos \left(\frac x 2 \right) \cos \left(\frac {z'} 2 \right) + \sin \left(\frac x 2 \right)  \sin \left(\frac {z'} 2 \right)}.
\end{split}
\end{equation}

\medskip
\noindent \underline{The case $0 < z < 2\pi-c_0$} (where $c_0 >0$). In that case, $\cos \left( \frac{z-x}{4} \right) \gtrsim 1$, and thus
$$
\left| \sin \left(\frac z 2 \right) \sin \left( \frac{\underline{h}(x,z)}{2} \right) \sin \left( \frac{{h}(x,z)}{2} \right) \right| \lesssim \sin \left( \frac x 2 \right).
$$
By \eqref{formula1} and the argument in Lemma \ref{lemmaasymptoticszero}, we see that the integrand in \eqref{formulaa} is $O(x)$, so that the contribution to $a(x)$ is $O(x^2)$.

\medskip

\noindent \underline{The case $z > 2\pi - c_0$.}
Letting $\epsilon = \sin \left( \frac x 2 \right)$ and $s = \sin \left( \frac {z'} 2 \right)$, formula \eqref{formula2} becomes
\begin{align*}
\left| \sin \left(\frac z 2 \right) \sin \left( \frac{\underline{h}(x,z)}{2} \right) \sin \left( \frac{{h}(x,z)}{2} \right) \right| & = \frac{\epsilon s^2 \left( 1-\epsilon s + \sqrt{1-\epsilon^2}\sqrt{1-s^2} \right) }{1 - \sqrt{1-\epsilon^2} \sqrt{1- s^2}  + \epsilon s } \\
& \sim \frac{\epsilon s^2}{\epsilon^2 + s^2} \sim
\begin{cases}
\frac{s^2}\epsilon & \mbox{if $s< \epsilon$} \\
\epsilon & \mbox{if $s > \epsilon$}
\end{cases}
\end{align*}
Combining this equivalent with the proof of Lemma \ref{lemmaasymptoticszero} gives the desired result.
\end{proof}

\begin{lemma}
\label{lemmaomega123}
For any $x,z \in [0,2\pi]$,
$$
\left| \sin \left( \frac {z} 2 \right) \sin \left( \frac {h(x,z)} 2 \right) \sin \left( \frac {\underline{h}(x,z)} 2 \right) \right| \lesssim \sin \left( \frac {x} 2 \right)
$$
\end{lemma}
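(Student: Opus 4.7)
The plan is to reduce the three--sine product on the left-hand side to a single factor of $\sin(x/2)$ by applying Lemma~\ref{lemmaomega2omega3} and then controlling the leftover trigonometric polynomial with the bound $|\tan((z-x)/4)\cos((z+x)/4)|\le 1$ from \eqref{boundtan}.

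First, by Lemma~\ref{lemmaomega2omega3},
\[
\left|\sin\!\left(\tfrac{z}{2}\right)\sin\!\left(\tfrac{h(x,z)}{2}\right)\sin\!\left(\tfrac{\underline{h}(x,z)}{2}\right)\right|
=\tan^2\!\left(\tfrac{z-x}{4}\right)\sin^2\!\left(\tfrac{z}{2}\right)\sin\!\left(\tfrac{x}{2}\right),
\]
so the lemma reduces to the inequality
\[
\tan^2\!\left(\tfrac{z-x}{4}\right)\sin^2\!\left(\tfrac{z}{2}\right)\lesssim 1 \qquad\text{for all }x,z\in[0,2\pi].
\]

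Next, I would introduce the change of variables $A=(z+x)/4\in[0,\pi]$ and $B=(z-x)/4\in[-\pi/2,\pi/2]$, so that $z/2=A+B$ and $x/2=A-B$. Expanding via the sine addition formula and applying $(a+b)^2\le 2a^2+2b^2$,
\[
\sin^2(A+B)\le 2\sin^2 A\cos^2 B+2\cos^2 A\sin^2 B.
\]
Multiplying by $\tan^2 B=\sin^2 B/\cos^2 B$ yields
\[
\sin^2(A+B)\tan^2 B\le 2\sin^2 A\sin^2 B+2\sin^2 B\,(\tan B\cos A)^2.
\]

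The first term on the right is bounded by $2$ trivially, and for the second term the key input is \eqref{boundtan}, which gives $(\tan B\cos A)^2\le 1$, so the second term is also $\le 2$. Therefore $\sin^2(z/2)\tan^2((z-x)/4)\le 4$, which inserted above yields the claimed bound with implicit constant $4$.

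The only mildly delicate point is recognising that \eqref{boundtan} is exactly what is needed to tame the apparent singularity of $\tan((z-x)/4)$ as $(z-x)/4\to\pm\pi/2$ (which corresponds precisely to the regimes $z\to 2\pi,\,x\to 0$ or $z\to 0,\,x\to 2\pi$, where $\sin(z/2)$ also vanishes). Once the variables $A,B$ are introduced and the expansion of $\sin(A+B)$ is written down, this cancellation is exhibited term by term, and no further work is required.
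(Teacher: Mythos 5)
Your proposal is correct. The first step is identical to the paper's: both apply Lemma~\ref{lemmaomega2omega3} to rewrite the three--sine product as $\tan^2\!\left(\frac{z-x}{4}\right)\sin^2\!\left(\frac z2\right)\sin\!\left(\frac x2\right)$, reducing matters to $\tan^2\!\left(\frac{z-x}{4}\right)\sin^2\!\left(\frac z2\right)\lesssim 1$. Where you diverge is in how this last bound is established: the paper splits into cases, treating $(x,z)$ away from $(0,2\pi)$ by the trivial boundedness of the tangent, and $(x,z)$ near $(0,2\pi)$ by reusing the asymptotic computation from Lemma~\ref{lemmaasymptoticsnonzero}, which gives the two-sided equivalent $\frac{\epsilon s^2}{\epsilon^2+s^2}\lesssim\epsilon$ with $\epsilon=\sin\frac x2$, $s=\sin\frac z2$. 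You instead prove the bound globally and elementarily: writing $A=\frac{z+x}{4}$, $B=\frac{z-x}{4}$, expanding $\sin(A+B)$, and invoking \eqref{boundtan} to control $(\tan B\cos A)^2\le 1$, which yields the explicit constant $4$ with no case analysis. Both are valid; your version is more self-contained (it does not lean on the computation inside Lemma~\ref{lemmaasymptoticsnonzero}) and exhibits the cancellation at the corners $(0,2\pi)$ and $(2\pi,0)$ transparently, while the paper's route additionally records the sharper two-sided asymptotics that it needs elsewhere. The only point to handle with a word is the degenerate corners where $\cos\frac{z-x}{4}=0$ and the tangent is undefined; there the left-hand side of the lemma vanishes outright, so the claim is trivial and your argument applies at all other points.
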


\begin{proof} By Lemma \ref{lemmaomega2omega3},
$$
\left| \sin \left( \frac {z} 2 \right) \sin \left( \frac {h(x,z)} 2 \right) \sin \left( \frac {\underline{h}(x,z)} 2 \right) \right| = \tan^2 \left( \frac{z-x} 4 \right) \sin^2 \left( \frac{z} 2 \right) \sin \left( \frac{x} 2 \right).
$$
For $(x,z)$ not close to $(0,2\pi)$,
$$
\sin \left( \frac {z} 2 \right) \sin \left( \frac {h(x,z)} 2 \right) \sin \left( \frac {\underline{h}(x,z)} 2 \right) \lesssim \sin \left( \frac{x} 2 \right).
$$
For $(x,z)$ close to $(0,2\pi)$, we use the notation $\epsilon = \sin \left( \frac x 2 \right)$ and $s = \sin \left( \frac{z} 2 \right)$ to get as in the proof of Lemma \ref{lemmaasymptoticsnonzero}
$$
\sin \left( \frac {z} 2 \right) \sin \left( \frac {h(x,z)} 2 \right) \sin \left( \frac {\underline{h}(x,z)} 2 \right) \sim \frac{\epsilon s^2}{\epsilon^2 + s^2} \lesssim \epsilon = \sin\left(\frac{x}{2}\right).
$$
\end{proof}

\bibliographystyle{alpha}
\bibliography{bibliography}

\end{document}